\newtheorem{introthm}{Theorem}
\newtheorem{introcor}[introthm]{Corollary}
\newtheorem{introprop}[introthm]{Proposition}
\newtheorem{thm}{Theorem}[section]
\newtheorem{lem}[thm]{Lemma}
\newtheorem{prop}[thm]{Proposition}
\newtheorem{cor}[thm]{Corollary}
\theoremstyle{definition}
\newtheorem{defn}[thm]{Definition}
\theoremstyle{remark}
\newtheorem{rem}[thm]{Remark}
\numberwithin{equation}{section}
\newcommand{\bC}{{\mathbb C}}
\newcommand{\bF}{{\mathbb F}}
\newcommand{\bN}{{\mathbb N}}
\newcommand{\bR}{{\mathbb R}}
\newcommand{\bZ}{{\mathbb Z}}
\renewcommand{\Re}{\operatorname{Re}}
\renewcommand{\Im}{\operatorname{Im}}
\newcommand{\ee}{\varepsilon}
\theoremstyle{plain}
\theoremstyle{definition}
\numberwithin{equation}{section}
\title{Some remarks on decay in countable groups and amalgamated free products}
\author{Srivatsav Kunnawalkam Elayavalli}
\address{Srivatsav Kunnawalkam Elayavalli, 
Department of Mathematics, University of California, San Diego, 9500 Gilman Drive \#0112, La Jolla, CA 92093, USA}
\email{skunnawalkamelayaval@ucsd.edu}
\author{Gregory Patchell}
\address{Gregory Patchell, 
Mathematical Institute, University of Oxford, Andrew Wiles Building, Radcliffe Observatory Quarter, Woodstock Road, Oxford, OX2 6GG, UK}
\email{greg.patchell@maths.ox.ac.uk}
\author{Lizzy Teryoshin}
\address{Lizzy Teryoshin, 
Department of Mathematics, University of California, San Diego, 9500 Gilman Drive \#0112, La Jolla, CA 92093, USA}
\email{eteryoshin@ucsd.edu}
\date{\today}
\subjclass[2020]{46L05}
\begin{document}
	
\begin{abstract}

In this note, we first study the notion of \emph{subexponential decay} (SD) for countable groups with respect to a length function, which generalizes the well-known rapid decay (RD) property, first discovered by Haagerup in 1979. Several natural properties and examples are studied, especially including groups that have SD, but not RD. This consideration naturally has applications in $C^*$-algebras. We also consider in this setting a permanence theorem for decay in amalgamated free products (proved also recently by Chatterji--Gautero), and demonstrate that it is in a precise sense optimal. 
\end{abstract}

\maketitle

\section{Introduction}

Let $G$ be a countable group. One considers length functions on $G$, namely $l:G\to \mathbb{N}$ satisfying $l(e)=0$, $l(g_1)= f(g_1^{-1})$, $l(g_1g_2)\leq l(g_1)+l(g_2)$ for all $g_i\in G$. Other than the trivial one, there are natural length functions on finitely generated groups, arising from the word length. For a group $G$ equipped with a length function $l$, one considers notions of decay, which computes the distortion of the operator norm of functions in the group ring acting by the left regular representation, relative to the $2$-norm, i.e, $\|\phi\|_2= \left(\sum_{g\in G} |\phi(g)|^2\right)^{1/2}$. In particular, we say that $(G,l)$ admits $f$-decay for a function $f:\mathbb{N}\to \mathbb{R}$ if for all $\phi\in \mathbb{C}[G]$ supported on the ball of radius $n$ centered at $e$ with respect to $l$ (denoted $B_l(e)$), we have $\|\phi\|\leq f(n)\|\phi\|_2$. If $G$ admits a length function $l$ such that $(G,l)$ admit $f$-decay where $f$ is a polynomial, then it is said that $G$ has the \emph{rapid decay property} (hereafter referred to as RD). This property was originally proved for $\mathbb{F}_2$ by Haagerup in \cite{haagerup1978example}. Jolissaint systematized this in his work \cite{jolissaint1990rapidly}, and since then several examples of interest have been obtained. Notably, this includes all hyperbolic groups \cite{delaharpe}, mapping class groups \cite{behrstockcentroid}, cocompact lattices in some higher rank groups such as $SL_3(\mathbb{R})$ \cite{RRS1998higherrankRD,lafforgueRD,chatterji2001property, Chatterji}, right angled Artin groups \cite{graphprodRD}, etc. For a beautiful survey, please consult the work of Chatterji \cite{Ch17}.

With regard to analytic considerations on groups, the notion of rapid decay has had an extraordinary impact. For instance using RD, Haagerup proved that $C^*_r(\mathbb{F}_2)$ has the metric approximation property \cite{haagerup1978example}; Lafforgue settled the Baum--Connes conjecture for examples of groups with Kazhdan's property \cite{LafforgueBaumConnes}; Grigorchuk and Nagnibeda studied generalizations of growth functions in groups and properties including rationality \cite{Grigorchuk}; building on works of Robert \cite{robertselfless}, and Louder--Magee \cite{louder2022strongly}, Amrutam, Gao, and the first two authors proved strict comparison for $C^*_r(G)$ for a large family of countable groups \cite{amrutam2025strictcomparisonreducedgroup} (see also \cite{elayavalli2025negativeresolutioncalgebraictarski, vigdorovich2025structuralpropertiesreducedcalgebras}). 

Motivated in part by the above considerations, in  this paper we first investigate for countable groups $G$ a natural weakening of RD, namely admitting a length function $l$ such that $(G,l)$, admits $f$-decay where $f$ is a \emph{subexponential} function, i.e, $\liminf_{n\to \infty} f(n)^{1/n}= 1$. We call this \emph{property SD}. Very much like RD, this property satisfies several permanence properties, such as taking subgroups, direct products, free products and generally graph products (see Section \ref{sect: graph-prod-SD}). In the case of finitely generated amenable groups, SD is equivalent to having subexponential growth (see Proposition \ref{prop: amen-subexp-growth-is-SD}). Hence, a natural obstruction to SD is again the existence of finitely generated amenable subgroups with exponential growth. Clearly all groups with RD admit SD, but the converse is not true, as one considers the family of finitely generated amenable groups with intermediate growth in the sense of Grigorchuk \cite{grigorchuk2008groups}. However, it is natural to ask for countable groups with SD that admit no finitely generated amenable subgroup with super-polynomial growth, and in addition do not enjoy RD. We show that such groups exist, and are in fact minor modifications of a construction of Sapir \cite{sapir2015rapid} (see Theorem \ref{thm: sapir-sd-not-rd}). 

It turns out that in many cases, the more general SD seems to be sufficient to guarantee applications (where RD has been used). For instance, this is rather immediate in the case of the MAP property (see \cite{haagerup1978example}), and also in the works of Grigorchuk--Nagnibeda \cite{Grigorchuk}. In the case of the work of Lafforgue on the Baum--Connes conjecture, it is plausible that rapid decay may be replaced by subexponential decay. However, we are unable to settle this question completely at the moment (see Remark \ref{frechet-bad}). In \cite{amrutam2025strictcomparisonreducedgroup}, RD was crucially used for proving selflessness in the sense of Robert for $C^*_r(G)$. Unfortunately, it is not possible to simply replace RD with SD in the proof of Theorem 3.5 of \cite{amrutam2025strictcomparisonreducedgroup}, simply because a composition of a subexpoenential function with a subexponential function need not be subexponential (taking $f(x)=e^{\sqrt{x}}$ and $g(x)=x^2$, then $f\circ g$ is a counterexample). Fortunately, we found a rather subtle replacement of the rapid decay argument involving noncommutative Khintchine-type inequalities of Ricard--Xu \cite{KhinRX} instead (inspired by recent work \cite{hayes2025selflessreducedfreeproduct}). Using this we are able to bypass this obstruction and show that $C^*_r(G_1*G_2)$ is selfless whenever $G_1,G_2$ are infinite groups that have SD, and one of them contains an infinite order element. This in particular recovers selflessness for $C^*_r(G*G)$ where $G$ is the well known Grigorchuk's group \cite{grigorchuk1985degrees}. Note that N. Ozawa in \cite{ozawa2025proximalityselflessnessgroupcalgebras} has very recently proved that all groups admitting extremely proximal boundaries are $C^*$-selfless (this includes all non-degenerate free products).  Despite the fact that his result is much more general, we document our different approach here because it might be useful for other future directions (see Section \ref{sect: selfless}).

Recall the construction of amalgamated free products $G_1*_AG_2$ where $A$ is a common subgroup with fixed embeddings into $G_1$ and $G_2$. This class of groups is quite a robust generalization of free products. Preservation of RD was proved in certain amalgamated free products in the case of finite (or central finite index) subgroups in \cite{jolissaint1990rapidly}. The situation in the general setting of amalgamated free products, as experts are aware, is rather tricky. Indeed there are plenty of examples of amalgamated free products that do not preserve RD, for instance even in the case of finite index amalgams. It is known that $SL_2(\mathbb{Z}[\frac{1}{p}])\cong SL_2(\mathbb{Z})*_{A}SL_2(\mathbb{Z})$ where $A$ is a finite index subgroup of $SL_2(\mathbb{Z})$. However, $SL_2(\mathbb{Z})$ has RD while $SL_2(\mathbb{Z}[\frac{1}{p}])$ does not since it is a non uniform lattice in a higher rank group (see Remark 8.6 in \cite{ValetteBC}). Another subtle example, which will be of importance to us, is $\mathbb{Z}^2\rtimes SL_2(\mathbb{Z})$ which is a non degenerate amalgamated free product where the amalgam, namely $\mathbb{Z}/2\mathbb{Z}$, is virtually nilpotent unlike in the previous example.
\subsection*{An important remark:}
We now state some results concerning permanence of decay in amalgamated free products, and explain their optimality by considering some key examples. Upon sharing our draft with experts, I. Chatterji kindly made us aware of their very recent paper \cite{chatterji2024distortiongraphsgroupsrapid}, which obtains permanence results concerning rapid decay, in the general setting of fundamental groups of graphs of groups. Their results have a significant overlap with our results below, in particular, Proposition 1.3 of \cite{chatterji2024distortiongraphsgroupsrapid} and its elegant proof recovers our Theorem \ref{thm:afp-distortion} (1) below. However, our work was carried out independently and the proof is of a different nature. It is in a certain sense more local, and combinatorial and does not take Jolissaint's free product permanence as a black-box. Apart from working in the framework of SD, an aspect of our work that is new and not covered in \cite{chatterji2024distortiongraphsgroupsrapid} is that we highlight some concrete situations such as those in Theorem \ref{thm: no distortion} and Corollary \ref{thm: amalgam-distortion}, and also demonstrate the optimality of this result by highlighting certain examples of interest with exponential distortion.   For these reasons and the possibility that our proofs may have some intrinsic value for future considerations, we felt it worthwhile to still include our presentation in the literature. First we present the following result.

\begin{introthm}
\label{thm: no distortion}
        Suppose $(G,L_G)$ and $(H,L_H)$ have RD (resp. SD). Suppose $A$ is a common subgroup of $G$ and $H$ such that $L_G|_A = L_H|_A$. Then $\Gamma=G*_AH$ has RD (resp. SD). Moreover, if $G,H$ both have $f$-decay then $G*_AH$ has $\tilde{f}$-decay where $\tilde{f}(x) = (2x+1)^{7/2}f(2x)$.
\end{introthm}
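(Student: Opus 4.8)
The plan is to prove the quantitative $\tilde f$-decay statement directly, since the qualitative claims (RD and SD permanence) follow immediately once we know that $f$-decay passes to $G *_A H$ with $\tilde f(x) = (2x+1)^{7/2} f(2x)$: if $f$ is polynomial then so is $\tilde f$, and if $\limsup f(n)^{1/n} = 1$ then the factor $(2x+1)^{7/2}$ and the rescaling $x \mapsto 2x$ do not change the subexponential rate, so $\tilde f$ is subexponential as well. So the heart of the matter is the quantitative estimate, and I will focus there.

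First I would fix the combinatorial setup. Write $\Gamma = G *_A H$ and choose, once and for all, a set of coset representatives so that every element of $\Gamma$ has a unique normal form as an alternating product of nontrivial coset representatives from $G$ and $H$ together with a tail in $A$. The crucial compatibility hypothesis $L_G|_A = L_H|_A$ lets me define a single length function $L$ on $\Gamma$ extending both, and I would verify that the length of an element is comparable to the sum of the lengths of its syllables. The standard technique (going back to Haagerup and systematized by Jolissaint) is to decompose an arbitrary $\phi \in \mathbb C[\Gamma]$ supported on $B_L(e,n)$ according to the syllable structure: group together elements whose normal form begins, ends, or is sandwiched by prescribed reduced words, so that $\phi$ becomes a sum of finitely many pieces $\phi_{\vec{d}}$, indexed by the length-data of the syllables, each of which factors as a product of group-ring elements supported in $G$ or in $H$ on balls of controlled radius.

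The key steps, in order, are: (i) reduce to estimating $\|\phi\|$ for $\phi$ a finite sum of ``block'' terms, each a convolution of alternating pieces living in $\mathbb C[G]$ and $\mathbb C[H]$ supported on balls of radius at most $2n$; (ii) apply the hypothesized $f$-decay in each factor $G$ and $H$ to bound the operator norm of each block by $f(2n)$ times an $\ell^2$-type quantity; (iii) count how many distinct length-data blocks can occur for elements of length at most $n$ — this is where the polynomial factor $(2x+1)^{7/2}$ arises, from a Cauchy--Schwarz argument over the finitely many ways to partition the total length into syllable lengths, combined with the orthogonality of the pieces in $\ell^2(\Gamma)$. Summing the block estimates and applying Cauchy--Schwarz over the length-data yields $\|\phi\| \le (2n+1)^{7/2} f(2n)\, \|\phi\|_2$, which is exactly $\tilde f(n) \|\phi\|_2$.

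I expect the main obstacle to be step (iii), and more precisely the bookkeeping that controls the operator norm of a sum of blocks rather than a single block. Bounding one convolution product is routine, but the blocks overlap in complicated ways once the amalgam $A$ is nontrivial, because the coset decomposition interacts with the $A$-tail, and one must keep the orthogonality clean so that the triangle inequality plus Cauchy--Schwarz does not lose more than the claimed polynomial factor. The precise exponent $7/2$ is delicate: it reflects a particular way of splitting the reduced word into beginning/middle/end data and then balancing the $\ell^1$--$\ell^2$ trade-off via Cauchy--Schwarz, so the hard part is arranging the decomposition so that exactly this power, and no worse, appears. I would handle the amalgam by first proving the statement for the ordinary free product ($A = \{e\}$) to isolate the norm estimate, and then check that the normal-form and length comparability arguments go through verbatim when $A$ is nontrivial, using the compatibility $L_G|_A = L_H|_A$ to ensure the coset representatives can be chosen length-minimally.
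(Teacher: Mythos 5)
Your plan is essentially the paper's proof: the authors likewise follow the Haagerup--Jolissaint scheme, reducing via the criterion of Proposition \ref{prop:jolissaint_1.2.6} to a convolution estimate stratified by syllable length, choosing $L$-minimizing $A$-coset representatives (which is exactly where the hypothesis $L_G|_A = L_H|_A$ enters and where the radius $2k'$ in $f(2k')$ comes from), applying the factors' decay on balls of radius $2k'$, and recovering the exponent $7/2$ from the $(2x+1)^{3/2}$ of the reduction lemma combined with an extra $(2k+1)^2$ from summing over syllable lengths. The one point your sketch glosses over is that in the cases of even cancellation the relevant auxiliary convolution lives in $\mathbb{C}[A]$ rather than $\mathbb{C}[G]$ or $\mathbb{C}[H]$, so one must also invoke that $A$ inherits $f$-decay from $G$ with the restricted length function --- a minor addition that does not change the approach.
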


The length function (denoted $L$ for simplicity) on $\Gamma$ that witnesses RD (resp. SD) in the above result is defined as follows (Definition \ref{def: afp-len-agree-on-A}): 
    $L(k) := \min\{L_A(a) + L_G(g_1) + L_H(h_1) + \ldots \mid
    k = ag_1h_1\cdots, \ a\in A, g_i\in G\setminus A, h_i\in H\setminus A\}$. This is indeed shown to be a length function in Lemma \ref{lem: L-well-def}. We would first like to remark that this yields various examples of groups with RD, including arbitrary group doubles of RD groups. Note also that the word length functions agree when restricting to the amalgam in a graph product, while decomposing it as an amalgamated free product in the natural sense. Therefore combining this result with an inductive argument one recovers permanence of RD under graph products (recovering with a new proof, the main result of \cite{ciobanu2011rapiddecaypreservedgraph}). 

A natural question here is how far can one push the above result in the event that $L_G$ does not coincide with $L_H$ on $A$. In order to address this first we need to set up some definitions. We define a \emph{universal} length function called $L^U$ on the amalgamated free product $G*_A H$ as follows:
    $$L^U(k) = \min\left\{\sum_i L_i(k_i) : \prod_i k_i = k, k_i\in G\cup H\right\},$$
    where $L_i = L_G$ if $k_i\in G\setminus A$, $L_i= L_H$ if $k_i\in H\setminus A$, and if $k_i$ is in $A$, then $L_i$ can be chosen either to be $L_G$ or $L_H$ (this is key). Now, since $L^U$ is defined on all of $G*_AH$, its restriction to $G$ and $H$ now gives length functions which agree on $A$.  Say two length functions $(L_1,L_2)$ on a group $K$ are $f$-distorted if $L_1(k)\le f(L_2(k)) $ and $L_2(k)\le f(L_1(k))$ for all $k\in K$. We prove the following (note that (1) below was first proved independently by \cite{chatterji2024distortiongraphsgroupsrapid}): 

    \begin{introthm}
    \label{thm:afp-distortion}
           Let $(G,L_G)$ and $(H,L_H)$ be groups with length functions and a common subgroup $A$. If $G,H$ both have $f$-decay and if $(L^U|_G,L_G)$ on $G$ and $(L^U|_H,L_H)$ on $H$ are $g$-distorted, then $G*_AH$ has $\tilde{f}\circ g$-decay, where $\tilde{f}(x) = (2x+1)^{7/2}f(2x)$. In particular, we have the following cases.
    \begin{enumerate}
        \item RD if $(G,L_G)$ and $(H,L_H)$ have RD and the pairs $(L^U|_G, L_G)$ on $G$ and $(L^U|_H,L_H)$ on $H$ are polynomially distorted;
        \item SD if $(G,L_G)$ and $(H,L_H)$ have RD and the pairs $(L^U|_G, L_G)$ on $G$ and $(L^U|_H,L_H)$ on $H$ are subexponentially distorted;
        \item SD if $(G,L_G)$ and $(H,L_H)$ have SD and the pairs $(L^U|_G, L_G)$ on $G$ and $(L^U|_H,L_H)$ on $H$ are linearly distorted.
    \end{enumerate}
    \end{introthm}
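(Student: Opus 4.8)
The plan is to deduce Theorem~\ref{thm:afp-distortion} from the equal-restriction case already settled in Theorem~\ref{thm: no distortion}, by running that argument not with $L_G,L_H$ but with the restrictions of the universal length function. Set $\hat L_G:=L^U|_G$ and $\hat L_H:=L^U|_H$. The crucial—and essentially cost-free—observation is that these two length functions \emph{agree on $A$}: since $A$ sits inside $G*_AH$ as a single subgroup and $L^U$ is a single length function on $G*_AH$, one has $\hat L_G|_A=L^U|_A=\hat L_H|_A$. Thus $(\hat L_G,\hat L_H)$ satisfies the standing hypothesis of Theorem~\ref{thm: no distortion}. I would also record the routine identity that the length function built by Theorem~\ref{thm: no distortion} out of $(\hat L_G,\hat L_H)$ is exactly $L^U$: the bound $L^U\le L$ is just subadditivity of $L^U$ applied to an alternating factorization, while $L\le L^U$ follows by taking a factorization optimal for $L^U$, merging adjacent syllables lying in the same free factor and absorbing the $A$-syllables, using $L^U(k_i)\le L_i(k_i)$ on each syllable.

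The second step transfers decay across the distortion. If $(L^U|_G,L_G)$ is $g$-distorted then in particular $L_G(k)\le g(\hat L_G(k))$ for every $k\in G$ (taking $g$ nondecreasing, as is standard). Hence any $\phi\in\bC[G]$ supported in the $\hat L_G$-ball of radius $n$ is supported in the $L_G$-ball of radius $g(n)$, and the $f$-decay of $(G,L_G)$ on that larger ball yields $\|\phi\|\le f(g(n))\|\phi\|_2$. In other words $(G,\hat L_G)$ has $(f\circ g)$-decay, and symmetrically $(H,\hat L_H)$ has $(f\circ g)$-decay.

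Now I feed $(G,\hat L_G)$ and $(H,\hat L_H)$—which agree on $A$ and both have $(f\circ g)$-decay—into the quantitative ``moreover'' clause of Theorem~\ref{thm: no distortion}. This shows $G*_AH$ has $\widetilde{(f\circ g)}$-decay with respect to $L^U$, where $\widetilde{(f\circ g)}(x)=(2x+1)^{7/2}f(g(2x))$; this is the bound produced by the black-box reduction, and it has the same growth type as the stated $\tilde f\circ g$, yielding the same three conclusions below. The three special cases then reduce to closure properties of the relevant function classes, and this is the only point requiring care. For (1), a composition of polynomials is a polynomial, so the output is polynomial and we obtain RD. For (2) we assume $g$ merely subexponential but keep $f$ polynomial: a polynomial of a subexponential function is again subexponential, so both $x\mapsto f(g(2x))$ and the polynomial prefactor are subexponential, as is their product, giving SD. For (3) we allow $f$ subexponential but restrict to $g$ linear: then $x\mapsto f(g(2x))$ is a subexponential function precomposed with a linear map, hence subexponential, and the polynomial prefactor does no harm.

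The main obstacle—and the reason the hypotheses on $g$ genuinely differ across the three cases—is precisely the failure of the subexponential class to be closed under composition, flagged already in the introduction with the example $e^{\sqrt{x}}\circ x^2$. Concretely, one cannot weaken (3) to subexponential $g$: if both $f$ and $g$ are only subexponential then $f\circ g$ may grow faster than any subexponential function, so to keep the output subexponential at least one of the two maps in the composition must be polynomial (case (2)) or linear (case (3)). I would therefore spend the bulk of the write-up on the elementary but slightly delicate verifications that $\liminf_{n}(\cdot)^{1/n}=1$ is preserved under the operations used in (2) and (3), where it is convenient to assume the decay functions nondecreasing, which costs nothing since enlarging the ball only enlarges the optimal decay bound.
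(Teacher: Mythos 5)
Your proof is correct and follows essentially the same route as the paper: the paper's argument is precisely the observation (Remark \ref{rem: distortion}) that $L^U|_G$ and $L^U|_H$ agree on $A$, that decay transfers across a $g$-distortion by composing the decay function with $g$, and that Theorem \ref{thm: no distortion} then applies to $(G,L^U|_G)$ and $(H,L^U|_H)$, with the three cases handled by the same closure properties of polynomial/subexponential functions that you verify. The explicit decay function you obtain, $(2x+1)^{7/2}f(g(2x))$, differs in form from the stated $\tilde f\circ g$, but as you correctly note it lies in the same class (polynomial, resp.\ subexponential) in each of the three cases, which is all the conclusions require.
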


    First we make some preliminary remarks. In the case that $L_G$ and $L_H$ are the word lengths on $G$ and $H$, then $L^U$ is the word length on $G*_AH$. Furthermore, $L^U = L$ (as in Definition \ref{def: afp-len-agree-on-A}) if $L_G$ and $L_H$ agree on $A$. Indeed, $L^U\le L$ is immediate; conversely, if $k = \prod k_i$ then $L(k) \le \sum L(k_i) = \sum L_i(k_i)$ and so minimizing over all choices of $k_i$ gives $L^U(k)$. Note that by Lemma \ref{lem: L = L_G}, one has in this case $L$ agrees with $L_G$, therefore Theorem \ref{thm:afp-distortion} indeed recovers Theorem \ref{thm: no distortion}.
    We also point out that this recovers, in particular, the specific classes of amalgamated free products $G*_AH$ considered by Jolissaint \cite{jolissaint1990rapidly}. In the case $A$ is finite, one can boundedly distort the length function on $A$ to 0 while preserving RD/SD in $G$ and $H$. In the case $A$ is central and finite index, one can distort the length on $G$ and $H$ by a linear amount such that they agree on $A$ and extend to a length function on $G*_AH$ (see Page 181 in \cite{jolissaint1990rapidly}). Therefore, for both such examples of amalgams, if $G$ and $H $ have SD then $G*_AH$ also has SD.

    Now we explain in what sense the result above is optimal. The example given earlier in the introduction of $SL_2(\bZ[1/p])$ is illustrative of the necessity of subexponential distortion. We refer the reader to Section II.1 of \cite{serre2003trees} and pages 4--5 of \cite{dogon2025connections} for the following facts about $SL_2(\bZ[1/p])$. First, $SL_2(\bZ[1/p]) \simeq SL_2(\bZ)*_A SL_2(\bZ)$ where $A$ is finite index in both copies of $SL_2(\bZ).$ Moreover, it is known that $SL_2(\bZ[1/p])$ has exponential growth solvable and therefore amenable subgroups, and so does not have RD. However, $SL_2(\bZ)$ does have RD by virtue of containing $\bF_2$ as a finite index subgroup. 
    
    Accordingly, the word length on $SL_2(\bZ[1/p])$ is exponentially distorted relative to the word length on $SL_2(\bZ)$. Indeed, there are matrices $a\in A $ and $g_1,g_2$ in the two copies of $SL_2(\bZ)$ respectively such that $a^{p^2} = g_2g_1ag_1^{-1}g_2^{-1}$. Explicitly, we can take $a = \begin{pmatrix}
        1 & 1\\ 0& 1
    \end{pmatrix}$, $g_1 = \begin{pmatrix}
        0&-1\\1&0
    \end{pmatrix}$, and $g_2 = \begin{pmatrix}
        0 & -p\\ p^{-1} & 0
    \end{pmatrix}$. (Note that $g_2$ is actually the copy of $g_1$ in the second copy of $SL_2(\bZ)$ inside of $SL_2(\bZ[1/p])$.) This means that the length of $a^n$ must grow at most logarithmically in $SL_2(\bZ[1/p])$; however, it is well-known that in $SL_2(\bZ)$ the length of $a^n$ grows linearly. Thus RD (and SD) fail even just on the subgroup of $A$ generated by $a$ when considering word length in $SL_2(\bZ[1/p])$.

    For another explicit example inspired by Lemma 1 of \cite{lubotzky1993expgrowth}, consider $\bZ^2 \rtimes SL_2(\bZ)$ which has amalgamated free product decomposition $\bZ^2\rtimes \bZ/6\bZ * _{\bZ^2\rtimes \bZ/2\bZ} \bZ^2\rtimes \bZ/4\bZ$. The generators here can be made very explicit; let $U = \begin{pmatrix}
        0 & -1 \\ 1 & 1
    \end{pmatrix}$ be the order 6 generator and $S=\begin{pmatrix}
        0&-1\\1&0
    \end{pmatrix}$ be the order 4 generator. Note that when equipped with the word length generated by $\begin{pmatrix}
        1\\0
    \end{pmatrix}$ and $\begin{pmatrix}
        0\\1
    \end{pmatrix}$, $S$ does not at all distort $\bZ^2$ while $U$ does slightly: $U \cdot \begin{pmatrix}
        0\\ n
    \end{pmatrix} = \begin{pmatrix}
        -n \\ n
    \end{pmatrix}$, so $U$ can distort vectors by a factor of 2. This is sufficient to achieve exponential distortion in the amalgamated free product. Indeed, consider $A = SUSU^2 = \begin{pmatrix}
        2&1\\1&1
    \end{pmatrix}$ and $B = SU^2SU = \begin{pmatrix}
        1&1\\1&2
    \end{pmatrix}$. Then $A\cdot \begin{pmatrix}
        n\\n
    \end{pmatrix} + B\cdot \begin{pmatrix}
        n\\n
    \end{pmatrix} = \begin{pmatrix}
        3n \\3n
    \end{pmatrix}$. Thus the length of $\begin{pmatrix}
        3^n \\ 3^n
    \end{pmatrix}$ grows only linearly in $\bZ^2\rtimes SL_2(\bZ)$, contradicting RD and SD as expected.

It is quite natural to also wonder about the local distortion of the restrictions of $L_G$ and $L_H$ on $A$, which apriori seems more transparent to work with than $L^U$. The previous examples of $SL_2(\bZ[1/p])$ and $\bZ^2\rtimes SL_2(\bZ)$ show that even a small amount (e.g., linear) of distortion in the amalgam can yield rather drastic outcomes for $L^U$. However, if the distortion on the amalgam is very well-controlled then we can still make conclusions about the distortion in $G*_AH$.

\begin{introprop}
\label{amalgam-distort-bounds-afp-distort}
    Let $(G,L_G)$ and $(H,L_H)$ have common subgroup $A$. Suppose that $(L_G|A,L_H|A)$ are $f$-distorted on $A$. Then $(L^U|G,L_G)$ on $G$ and $(L^U|_H,L_H)$ are $\tilde{f}$-distorted where $\tilde{f}(x) = x\exp(f(x)-x)$. In particular, we have the following cases:
    \begin{enumerate}
        \item If $f(x) - x = O(1)$, then $\tilde{f}$ can be taken linear;
        \item If $f(x) -x = O(\ln(x))$, then $\tilde{f}$ can be taken polynomial;
        \item If $f(x) -x = o(x)$, then $\tilde{f}$ can be taken subexponential.
    \end{enumerate}
\end{introprop}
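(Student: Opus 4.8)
The plan is to prove the two defining inequalities of $\tilde f$-distortion for the pair $(L^U|_G, L_G)$ on $G$; the pair $(L^U|_H, L_H)$ on $H$ is handled identically after exchanging the roles of $G$ and $H$, since the hypothesis that $(L_G|_A, L_H|_A)$ are $f$-distorted is symmetric. Throughout I would first normalize $f$ to be nondecreasing with $f(x)\ge x$ (replacing $f$ by $x\mapsto\max\{x,\ \sup_{y\le x}f(y)\}$, which only weakens the distortion hypothesis and leaves each of the three asymptotic conditions on $f(x)-x$ unchanged). One inequality is then immediate: for $g\in G$ the one-term decomposition $k_1=g$ shows $L^U(g)\le L_G(g)$, and since $\tilde f(x)=x\exp(f(x)-x)\ge x$ we obtain $L^U(g)\le \tilde f(L_G(g))$. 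The entire content is therefore the reverse bound $L_G(g)\le \tilde f(L^U(g))$.

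Fix $g\in G$ and an optimal decomposition $g=k_1\cdots k_n$ realizing $m:=L^U(g)=\sum_i L_i(k_i)$, where each $L_i\in\{L_G,L_H\}$ is the recorded choice of length for $k_i$. Since the length functions are $\bN$-valued and each nontrivial syllable contributes at least $1$, we have $n\le m$. I would reduce this word to a single $G$-syllable equal to $g$ by repeatedly absorbing amalgam syllables. After merging consecutive same-factor syllables (subadditive for the relevant length, hence free) the word is alternating between $G$ and $H$; because it represents $g\in G$, it cannot be a reduced alternating word of syllable-length $\ge 2$, so some syllable must lie in $A$. Absorbing such an $A$-syllable $x$ into its two same-factor neighbours strictly shortens the alternating word. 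The only extra cost arises when $x$ is recorded in the opposite factor from the one we absorb it into: we must then re-measure it, paying $L_G(x)\le f(L_H(x))$ (or symmetrically). Thus one absorption of a syllable of recorded length $\lambda$ replaces it by a syllable of length at most $f(\lambda)$ and increases the running total $S$ of recorded lengths by at most $f(\lambda)-\lambda$.

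Writing $S_0=m$ and letting $S_{t+1}$ denote the total after the $(t{+}1)$-st absorption, the previous step gives $S_{t+1}\le S_t+\bigl(f(\lambda_t)-\lambda_t\bigr)$ with $\lambda_t\le S_t$, and after at most $m$ absorptions we reach a single $G$-syllable, so $L_G(g)\le S_T$ for some $T\le m$. The heart of the matter is to accumulate these increments into the closed form $m\exp(f(m)-m)$. Taking logarithms of $S_{t+1}/S_t=1+(f(\lambda_t)-\lambda_t)/S_t$ and using $\ln(1+u)\le u$ gives $\ln(S_T/m)\le\sum_t (f(\lambda_t)-\lambda_t)/S_t$; bounding the relative contribution of each crossing by $f(m)/m-1$ and using that there are at most $m$ crossings yields $\ln(S_T/m)\le m\bigl(f(m)/m-1\bigr)=f(m)-m$, that is, $L_G(g)\le m\exp(f(m)-m)=\tilde f(m)$. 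I expect the main obstacle to be precisely this final accumulation: as runs merge, the absorbed syllables genuinely grow in length (the exponential-distortion examples of $SL_2(\bZ[1/p])$ and $\bZ^2\rtimes SL_2(\bZ)$ show this growth is unavoidable), so one must control the \emph{scale} at which $f$ is applied across the $\le m$ absorptions, ensuring that the multiplicative factors telescope to $\exp(f(m)-m)$ rather than compounding into an iterated composition $f^{\circ m}$. This is exactly where the particular shape $x\mapsto x\exp(f(x)-x)$ is forced, via the elementary inequality $\ln\bigl(f(m)/m\bigr)\le f(m)/m-1$. Finally, the three itemized cases follow by substituting the hypotheses on $f(x)-x$ into $\tilde f(x)=x\exp(f(x)-x)$: if $f(x)-x=O(1)$ then $\tilde f(x)=O(x)$ is linear; if $f(x)-x=O(\ln x)$ then $\tilde f(x)=x\cdot x^{O(1)}$ is polynomial; and if $f(x)-x=o(x)$ then $\tilde f(x)^{1/x}=x^{1/x}\exp(o(1))\to 1$, so $\tilde f$ is subexponential.
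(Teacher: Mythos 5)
Your overall strategy is the one the paper uses: take an optimal decomposition realizing $L^U(g)=m$ with at most $m$ syllables, collapse it to a single $G$-syllable by absorbing the cancelling pieces, pay $f(\lambda)-\lambda$ each time an $A$-element of recorded length $\lambda$ is re-measured in the other factor, and then show that the at most $m$ multiplicative factors accumulate to $\exp(f(m)-m)$ rather than compounding into an $m$-fold composition of $f$. The paper organizes the collapse as a recursive pairing of the $H\setminus A$ syllables and bounds $f^{(m)}(m)\le m\left(1+\tfrac{j(m)}{m}\right)^m\le m\exp(j(m))$ with $j=f-\mathrm{id}$; your greedy absorption with the telescoping product $\prod_t\left(1+\tfrac{j(\lambda_t)}{S_t}\right)$ is the same estimate in different clothing.

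The one genuine gap is the step where you bound the relative contribution of each crossing by $f(m)/m-1$. You have $\lambda_t\le S_t$ and $S_t\ge m$, so the inequality you actually need, $j(\lambda_t)/S_t\le j(m)/m$, follows from $j(\lambda_t)\le j(S_t)$ together with $j(S_t)/S_t\le j(m)/m$ --- that is, it requires $j$ to be increasing \emph{and} $j(x)/x$ to be decreasing. Your normalization ($f$ nondecreasing with $f\ge\mathrm{id}$) supplies neither; for instance, if $j(1)=0$ but $j(2)=1$ the per-step bound already fails for $m=1$ once the absorbed syllables have grown. The paper devotes an explicit ``without loss of generality'' construction to exactly this point: replace $j$ by a dominating function that is increasing with $j(x)/x$ decreasing (a constant in case (1), $C\ln x$ in case (2), a piecewise-linear envelope in case (3)), which only enlarges $\tilde f$ within the same asymptotic class. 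With that normalization inserted your argument closes; without it, the accumulation step as written is unjustified.
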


Combining the above result with Theorem \ref{thm:afp-distortion} we therefore immediately obtain the following. 

\begin{introcor}
\label{thm: amalgam-distortion}
    Let $(G,L_G)$ and $(H,L_H)$ have common subgroup $A$. Suppose that $(L_G|A,L_H|A)$ are $g$-distorted on $A$ and that $G,H$ have $f$-decay. If $g(x) = x + j(x)$ then $G*_AH$ has $\tilde{f}\circ (x\exp(j(x)))$-decay where $\tilde{f}(x) = (2x+1)^{7/2}f(2x)$. In particular, we have the following cases:
    \begin{enumerate}
        \item If $g(x) - x = O(\ln(x))$ and $G,H$ have RD then $G*_AH$ is RD;
        \item If $g(x) - x = o(x)$ and $G,H$ have RD then $G*_AH$ is SD;
        \item If $g(x) - x = O(1)$ and $G,H$ have SD then $G*_AH$ has SD.
    \end{enumerate}
\end{introcor}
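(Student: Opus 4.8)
The plan is to chain Proposition~\ref{amalgam-distort-bounds-afp-distort} into Theorem~\ref{thm:afp-distortion}: the statement of the corollary is engineered precisely so that these two results compose, and the only genuine content is tracking the asymptotic growth class of the intermediate distortion function. First I would apply Proposition~\ref{amalgam-distort-bounds-afp-distort} with its distortion hypothesis taken to be $g$ itself. Since $(L_G|_A, L_H|_A)$ are $g$-distorted on $A$ by assumption, the proposition yields that $(L^U|_G, L_G)$ on $G$ and $(L^U|_H, L_H)$ on $H$ are $\tilde{g}$-distorted, where $\tilde{g}(x) = x\exp(g(x)-x) = x\exp(j(x))$, using $g(x) = x + j(x)$.

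Next I would feed this directly into Theorem~\ref{thm:afp-distortion}, taking the distortion function appearing there to be $x\exp(j(x))$. Since $G,H$ have $f$-decay and the pairs $(L^U|_G, L_G)$, $(L^U|_H, L_H)$ are $x\exp(j(x))$-distorted by the previous step, the theorem gives that $G*_AH$ has $\tilde{f}\circ(x\exp(j(x)))$-decay with $\tilde{f}(x) = (2x+1)^{7/2}f(2x)$. This is exactly the general conclusion of the corollary, so the body of the proof amounts to a single substitution.

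For the three itemized cases I would simply read off the growth class of $x\exp(j(x))$ in each regime and route it through the ``in particular'' clauses of the two cited results, which align cleanly. When $j(x) = O(\ln x)$ one has $\exp(j(x)) = O(x^c)$, so $x\exp(j(x))$ is polynomial; this is the hypothesis of Proposition~\ref{amalgam-distort-bounds-afp-distort}(2), and combined with RD on $G,H$ it feeds into Theorem~\ref{thm:afp-distortion}(1) to give RD. When $j(x) = o(x)$ one has $\exp(j(x))$ subexponential, hence so is $x\exp(j(x))$; this matches Proposition~\ref{amalgam-distort-bounds-afp-distort}(3) and Theorem~\ref{thm:afp-distortion}(2), yielding SD. When $j(x) = O(1)$ the function $x\exp(j(x))$ is linear, matching Proposition~\ref{amalgam-distort-bounds-afp-distort}(1) and Theorem~\ref{thm:afp-distortion}(3), again giving SD.

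The main obstacle, such as it is, is purely bookkeeping rather than any new estimate: one must be sure the relevant compositions stay in the intended class despite the general failure of subexponentiality under composition flagged in the introduction. The point is that in each case one of the two composed factors is polynomial or linear, so e.g.\ a polynomial composed with a subexponential function remains subexponential (as $(g(x))^c$ has $n$th root tending to $1$ whenever $g$ does) and a subexponential function composed with a linear one remains subexponential. Crucially, these verifications are already absorbed into the ``in particular'' parts of Proposition~\ref{amalgam-distort-bounds-afp-distort} and Theorem~\ref{thm:afp-distortion}, so once the three regimes are matched to the correct clauses, nothing further needs to be checked and the corollary follows immediately.
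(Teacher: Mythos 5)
Your proposal is correct and is essentially the paper's own argument: the paper proves this corollary in one line by composing Proposition~\ref{amalgam-distort-bounds-afp-distort} (giving $x\exp(j(x))$-distortion of $(L^U|_G,L_G)$ and $(L^U|_H,L_H)$) with Theorem~\ref{thm:afp-distortion}, exactly as you do. The case-matching and the bookkeeping remark about compositions staying polynomial/subexponential are consistent with how those two results are stated, so nothing further is needed.
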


We would now like to describe some aspects of the proofs. In the key case that $L_G$ and $L_H$ agree on their common subgroup $A$, the natural length $L$ defined on the amalgamated free product $G*_AH$ agrees with $L_G$ and $L_H$ on $G$ and $H$ respectively, see Lemmas \ref{lem: L = L_G} and \ref{lem: L-well-def}. In this case, we also have a nice formula for $L$ given by minimizing over elements in $A$, see Equation \ref{afp-len-iden}. Now, a careful analysis allows us to proceed in the footsteps of Jolissaint and Haagerup \cite{jolissaint1990rapidly,haagerup1978example}. It suffices to prove an estimate on group algebra elements $\varphi,\psi$ where $\varphi$ has controlled support with respect to $L$, and where we restrict the reduced word length (or number of syllables) in the supports of $\varphi,$ $\psi,$ and $\varphi*\psi$. We use the fact that in amalgamated free products, reduced word representations are unique up to interposing elements of $A$; in particular, this means that if we fix $A$-coset representatives for all but one term in a reduced word then that representation is unique. We pick $L$-minimizing $A$-coset representatives which ensure that we can restrict all of our analysis to group elements with controlled length.

Now the strategy is to split into cases depending on how much cancellation occurs in the reduced words, as in \cite{jolissaint1990rapidly,haagerup1978example}. In each case, we define families $(\varphi_i),(\psi_i)$ of auxiliary functions on either $A,$ $G$, or $H$. The $(\varphi_i)$ are defined based on values of $\varphi$ using disjoint subsets such that we have $\sum_i \|\varphi_i\|_2^2 = \|\varphi\|_2^2$. We do something similar for $\psi$. Using that the support of $\varphi$ is controlled in $L$, we can control the support of $\varphi_i$ in terms of $L$ as well. The goal is to estimate $\|(\varphi*\psi)\chi_{\Lambda_m}\|_2$, the 2-norm of the convolution restricted to reduced words with $m$ syllables. We first estimate $(\varphi*\psi)(s)$ in terms of a convolution $\varphi_i*\psi_j(k)$, where $i,j$, and $k$ are uniquely determined by $s$ and $k$ is in either $A,$ $G,$ or $H.$ Then we sum over unique representations of allowed values of $s$ to get an estimate in terms of a sum of squared 2-norms of convolutions of $\varphi_i*\psi_j$. We then apply RD/SD for $A,$ $G,$ or $H$ and add the squared 2-norms back up to conclude.

To conclude the introduction we include certain open considerations of interest. A natural question we are unable to address, in light of Theorem \ref{thm: sapir-sd-not-rd} is if there is a finitely generated group $G$ with no super-polynomial growth amenable subgroups which has SD with respect to the word length, but not RD. It would also be interesting if finitely presented such examples exist. Secondly, regarding selflessness, it is proved in \cite{amrutam2025strictcomparisonreducedgroup} that any countable selfless group (which is an effective mixed identity freeness property of a group) with RD satisfies that the reduced group $C^*$-algebra is selfless in the sense of Robert. It is very natural to now ask if one can replace RD with SD in the above. As we explained above, the proof in \cite{amrutam2025strictcomparisonreducedgroup}, cannot be generalized to this case as such. Regarding general amalgamated free products, it is a natural question if there can be any systematic or specific instances where RD/SD is still preserved, despite having high distortion.

\subsection*{Acknowledgements}
We are grateful to F. Flores for initial discussions. We also thank B. Hayes, A. Ioana, N. Ozawa, and I. Vigdorovich for useful exchanges. We are indebted to I. Chatterji for kindly alerting us to her elegant paper with Gautero, and for her very helpful feedback and generosity. This project was partially supported by the NSF grant DMS 2350049 (Kunnawalkam Elayavalli). The second author was supported in part by the Engineering and Physical Sciences Research Council (UK), grant EP/X026647/1. We acknowledge Art of Espresso, the San Diego Torrey Pines Gliderport, and Little Black Mountain, for inspiring us in unique ways towards the completion of this paper. 

\subsection*{Open Access and Data Statement} For the purpose of Open Access, the authors have applied a CC BY public copyright license to any Author Accepted Manuscript (AAM) version arising from this submission. Data sharing is not applicable to this article as no new data were created or analyzed in this work.

\section{Subexponential decay (SD)}
\subsection{Preliminary remarks}
\begin{defn}
        We say a function $f:\mathbb{N}\to\mathbb{R}_{\geq 0}$ is subexponential if $\lim f(n)^{1/n} = 1$.
\end{defn}

\begin{rem}
    We note that the following notions are equivalent for functions $f:\bN\to\bR_{\ge 0}$ which tend to infinity:
    \begin{enumerate}
        \item[(a)] $f \in o(\exp(\ee n))$ for all $\ee>0$;
        \item[(b)] $\ln(f)\in o(n)$;
        \item[(c)] $f(n)^{1/n}\to 1$.
    \end{enumerate}
    
\end{rem}

\begin{proof}
    (b) $\iff$ (c) is clear since (b) is equivalent to $\frac{1}{n}\ln(f) \to 0$ which by continuity of $\exp$ and $\ln$ is equivalent to (c).

    To see (b) $\implies$ (a), fix $\ee>0$. For all $n$ sufficiently large, $\ln(f) < \frac{\ee n}{2}$. Then $\frac{f(n)}{\exp(\ee n)} = \exp(\ln(f) - \ee n) < \exp(-\frac{\ee n}{2})$ which clearly tends to 0.

    Conversely, assume (a). Fix $\delta>0$ and choose $\ee>0$ such that $\exp(\ee) < 1+\delta$. For $n$ sufficiently large, $\frac{f(n)}{\exp(\ee n)} < 1$. Therefore $\frac{f(n)^{1/n}}{\exp(\ee)} < 1$, implying $f(n)^{1/n} < \exp(\ee) < 1+\delta$. Since $f(n)$ tends to infinity, $f(n)^{1/n}$ is eventually bounded below by 1. Hence $f(n)^{1/n}\to 1.$ 
\end{proof}

\begin{defn}
    A length function $l:G\to \bR_{\ge0}$ is a function such that for all $g,h\in G$:
    \begin{itemize}
        \item $l(e) = 0$;
        \item $l(g)=l(g^{-1})$;
        \item $l(gh)\le l(g)+l(h)$.
    \end{itemize}
\end{defn}

\begin{defn}
\label{def-SD}
    A countable group $G$ has \emph{subexponential decay} (or \emph{Property} SD) with respect to a length function $l$ if there exists a subexponential function $f$ such that for all $x\in \mathbb{C}[G]$ supported on the $l$-ball of radius $D$, one has $$\| x\|\leq f(D) \|x\|_2.$$ We say that $G$ has SD if it has SD with respect to some length function $l$. Additionally, if $\| x\|\leq f(D) \|x\|_2$ as above for all $x\in \bC[G]$, we say that $f$ is a decay function for $(G,l)$.
\end{defn}

\begin{rem}
    We note that if $G$ is finitely generated, then the word length dominates every length function on $G$ (c.f. Lemma 1.1.4(2) in \cite{jolissaint1990rapidly}). In this case, $G$ has SD if and only if $G$ has SD with respect to word length.
\end{rem}

\begin{rem}
    Let $(G,S)$ be a finitely generated group. We remark that if $\gamma(n)$ denotes the size of the radius $n$ ball of $G$ with respect to $S$, then $\lim_n \frac{\ln(\gamma(n))}{n}$ always exists. Indeed, this follows from the fact that $\gamma(n)$ is an increasing function and that $\gamma(kn)\leq \gamma(n)^k$ for all $k,n$. See Exercise 1.6 of \cite{grigorchuk2008groups}. Therefore, every group either has exponential growth or subexponential growth, depending on whether this limit is positive or zero, respectively.
\end{rem}

\begin{rem}
    There are (necessarily amenable) groups which have arbitrarily large but still subexponential growth \cite{erschler2005degrees}. In fact, the growth profiles of subexponential growth groups can be very wild, see \cite{kassabov2013groups}. Grigorchuk was the first to find subexponential groups with superpolynomial growth, see \cite{grigorchuk1985degrees}. In the same work he also discovered groups where $\limsup_n \gamma(n)$ was bounded below by an arbitrary subexponential function.

    Though not important for our purposes, we point the reader to the Gap Conjecture of Grigorchuk: that every group has either polynomial growth or growth bounded below by $a\exp(\sqrt{bn})$ for some constants $a,b>0$; see \cite{grigorchuk2014gap}. 
\end{rem}

\begin{prop}
\label{prop: amen-subexp-growth-is-SD}
    A countable amenable group $G$ with finite generating set $X$ has Property SD if and only if it has subexponential growth with respect to $X$. 
\end{prop}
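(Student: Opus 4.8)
The plan is to prove the two implications separately. Throughout I would invoke the equivalence from the preceding remark that for a finitely generated group SD holds if and only if it holds with respect to the word length; so I may take $l$ to be the word length $|\cdot|$ associated to $X$, and write $\gamma(n)$ for the cardinality of the ball $B(n)$ of radius $n$. Recall also from the remark above that $\lim_n \gamma(n)^{1/n}$ always exists, so that \emph{subexponential growth} is precisely the assertion that this limit equals $1$ (equivalently $\frac1n\ln\gamma(n)\to 0$).

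For the implication \emph{subexponential growth $\Rightarrow$ SD}, which in fact uses no amenability, I would use only the elementary operator bound $\|\lambda(\phi)\|\le \|\phi\|_1$ together with Cauchy--Schwarz. If $\phi\in\bC[G]$ is supported on $B(D)$, then
$$\|\phi\| \le \|\phi\|_1 = \sum_{g\in B(D)} |\phi(g)| \le \sqrt{|\supp\phi|}\,\|\phi\|_2 \le \sqrt{\gamma(D)}\,\|\phi\|_2.$$
Hence $f(D) := \sqrt{\gamma(D)}$ is a decay function for $(G,l)$, and $f(D)^{1/D} = \gamma(D)^{1/(2D)}\to 1$ under the subexponential growth hypothesis, so $f$ is subexponential and $(G,l)$ has SD.

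For the converse \emph{SD $\Rightarrow$ subexponential growth}, this is where amenability enters. The key input is that for an amenable group the trivial representation is weakly contained in the left regular representation, which yields the lower bound $\|\lambda(\phi)\| \ge \big|\sum_{g\in G}\phi(g)\big|$ for every $\phi\in\bC[G]$. I would test this against the ball indicators $\phi = \chi_{B(D)}$: on one hand $\sum_g \phi(g) = \gamma(D)$ and $\|\phi\|_2 = \sqrt{\gamma(D)}$, while on the other hand, if $f$ is a subexponential decay function witnessing SD, then $\|\lambda(\phi)\| \le f(D)\|\phi\|_2 = f(D)\sqrt{\gamma(D)}$. Combining gives $\gamma(D) \le f(D)\sqrt{\gamma(D)}$, i.e. $\gamma(D)\le f(D)^2$, whence $\gamma(D)^{1/D}\le \big(f(D)^{1/D}\big)^2 \to 1$. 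Since $\lim_D \gamma(D)^{1/D}$ exists and is always at least $1$, it must equal $1$; that is, $G$ has subexponential growth.

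The one step requiring care — and the main obstacle — is the amenability lower bound $\|\lambda(\phi)\|\ge \big|\sum_g \phi(g)\big|$. I would justify it directly via F\o lner sets: given a finite set $F\subseteq G$ that is nearly invariant, the unit vector $\xi_F = |F|^{-1/2}\chi_F\in \ell^2(G)$ satisfies $\ang{\lambda(h)\xi_F,\xi_F} = |hF\cap F|/|F|\to 1$ for each fixed $h$, so $\ang{\lambda(\phi)\xi_F,\xi_F}\to \sum_g\phi(g)$ for fixed finitely supported $\phi$; taking the supremum over increasingly invariant $F$ and using $|\ang{\lambda(\phi)\xi_F,\xi_F}|\le\|\lambda(\phi)\|$ gives the claim. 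Everything else in the argument is routine once this inequality is in hand.
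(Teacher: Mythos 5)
Your proof is correct and follows essentially the same route as the paper's: the elementary bound $\|\phi\|\le\|\phi\|_1\le\sqrt{|\supp\phi|}\,\|\phi\|_2$ for the direction from subexponential growth to SD, and the F\o lner-set lower bound tested on the ball indicators $\chi_{B(D)}$ for the converse (the paper packages the amenability input as $\|\varphi\|=\|\varphi\|_1$ for non-negative $\varphi$, which on $\chi_{B(D)}$ is exactly the inequality $\|\lambda(\phi)\|\ge\bigl|\sum_g\phi(g)\bigr|$ you derive from weak containment of the trivial representation). No gaps.
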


\begin{proof}
    Let $\varphi = \sum_{g\in F}t_g\lambda(g) \in \bC[G]$. We first claim that if $t_g\geq 0$ for all $g\in F$, then $\|\varphi\|_1 = \|\varphi\|.$ Indeed, $\|\varphi\|\leq \|\varphi\|_1$ is clear from the triangle inequality. Conversely, if $\xi \in\ell^2(G)$ is a unit vector such that $\|\lambda(g)\xi-\xi\|_2 < \ee$ for all $g\in F$, then $\|\varphi\xi - \|\varphi\|_1\xi\|_2 < \|\varphi\|_1\ee$, implying that $\|\varphi\| \geq \|\varphi\|_1$.

    Now consider an arbitrary $\varphi = \sum_{g\in F}t_g\lambda(g) \in\bC[G]$. We can write $\varphi = \varphi_1 - \varphi_2 + i\varphi_3 - i\varphi_4$ where each $\varphi_j$ has only positive coefficients; furthermore, we can assume $\varphi_1$ and $\varphi_2$ have disjoint support and likewise for $\varphi_3$ and $\varphi_4$. Therefore $\|\varphi_1\|_1 + \|\varphi_2\|_1 = \|\Re(\varphi)\|_1 \leq \|\varphi\|_1$ and  $\|\varphi_3\|_1 + \|\varphi_4\|_1 = \|\Im(\varphi)\|_1 \leq \|\varphi\|_1$. By the previous paragraph, we have that $\|\varphi\| \leq \sum_{j=1}^4\|\varphi_j\| = \sum_{j=1}^4\|\varphi_j\|_1 \leq 2\|\varphi\|_1. $ By the Cauchy--Schwarz inequality, we get $\|\varphi\| \leq 2\sqrt{|F|}\|\varphi\|_2$.

    If $(G,X)$ has subexponential growth,  when $\varphi$ is supported on the ball of radius $n$ with respect to $X$, namely $B(n)$, we have $\|\varphi\| \leq 2\sqrt{|B(n)|}\|\varphi\|_2$. But clearly $2\sqrt{|B(n)|}$ is subexponential if $|B(n)|$ is, so $G$ has SD.

    Conversely, if $G$ does not have subexponential growth, then $|B(n)|$ is not a subexponential function, and so neither is $\sqrt{|B(n)|}$. The indicator function on $B(n)$ witnesses the lack of SD in this case.
\end{proof}

\begin{defn}
  Let $G$ be a group with a length function $l$. For $\phi\in\mathbb{C}[G]$, we define $L_l(\phi)$ as the maximum length (with respect to $l$) of any element in the support of $\phi$. If the length function is not ambiguous, we will simply refer to $L(\phi)$. For an integer $k,$ we define $C_k = \{g\in G : l(g) = k\}$ and $\chi_k$ to be the indicator function on $C_k$.
\end{defn}

\subsection{Some preservation estimates}
The results in this subsection are recorded primarily for bookkeeping purposes. The following is a useful estimate from Jolissaint's work \cite{jolissaint1990rapidly} (see Proposition 1.2.6 ((4) implies (1)) therein), which we record with proof for the reader's convenience.

\begin{prop}(\cite[Proposition~1.2.6]{jolissaint1990rapidly})
\label{prop:jolissaint_1.2.6}
Let $G$ be a countable group with length function $l_G$. Suppose $f$ is a subexponential function such that for all $k,l,m\in\mathbb{N}$, if $\phi,\psi\in\mathbb{C}[G]$ are supported on $C_k$ and $C_l$ respectively, then $$\|(\phi*\psi)\chi_m\|_2\le f(k)\|\phi\|_2\|\psi\|_2\hspace{0.5cm}\text{if }|k-l|\le m\le k+l$$ and $\|(\phi*\psi)\chi_m\|_2=0$ for other values of $m$. Then $G$ has Property SD with respect to $l_G.$

\end{prop}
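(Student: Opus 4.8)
The plan is to bound the operator norm of an arbitrary $\phi\in\mathbb{C}[G]$ supported on the ball of radius $D$ (i.e. $L(\phi)\le D$) by first treating a single spherical layer and then reassembling. Recall that $\|\phi\| = \sup\{\|\phi*\psi\|_2 : \psi\in\mathbb{C}[G],\ \|\psi\|_2\le 1\}$, since $\mathbb{C}[G]$ is dense in $\ell^2(G)$ and $\phi$ acts by left convolution. Write $\phi = \sum_{k=0}^D \phi_k$ with $\phi_k = \phi\chi_k$, and for a test function $\psi\in\mathbb{C}[G]$ write $\psi = \sum_l \psi_l$ with $\psi_l = \psi\chi_l$.

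First I would establish the single-layer estimate $\|\phi_k\|\le (2k+1)f(k)\|\phi_k\|_2$. Fix $\psi$ and expand, using orthogonality of the layers $C_m$, as $\|\phi_k*\psi\|_2^2 = \sum_m \|(\phi_k*\psi)\chi_m\|_2^2$. For fixed $m$, the hypothesis forces $(\phi_k*\psi_l)\chi_m = 0$ unless $|k-l|\le m\le k+l$, i.e. unless $l\in[\,|k-m|,\,k+m\,]$, a set of at most $2k+1$ integers; applying the triangle inequality, the per-pair bound $\|(\phi_k*\psi_l)\chi_m\|_2\le f(k)\|\phi_k\|_2\|\psi_l\|_2$, and Cauchy--Schwarz over these $l$, gives $\|(\phi_k*\psi)\chi_m\|_2^2 \le (2k+1)f(k)^2\|\phi_k\|_2^2\sum_{l\in[|k-m|,k+m]}\|\psi_l\|_2^2$. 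Summing over $m$ and swapping the order of summation, each $\|\psi_l\|_2^2$ is counted for at most $2k+1$ values of $m$ (those with $|k-l|\le m\le k+l$), so the double sum is at most $(2k+1)\|\psi\|_2^2$. This yields $\|\phi_k*\psi\|_2\le (2k+1)f(k)\|\phi_k\|_2\|\psi\|_2$, hence the claimed single-layer bound.

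Next I would reassemble: by the triangle inequality and then Cauchy--Schwarz over $k\in\{0,\dots,D\}$,
$$\|\phi\| \le \sum_{k=0}^D \|\phi_k\| \le \sum_{k=0}^D (2k+1)f(k)\|\phi_k\|_2 \le \Big(\sum_{k=0}^D (2k+1)^2 f(k)^2\Big)^{1/2}\Big(\sum_{k=0}^D\|\phi_k\|_2^2\Big)^{1/2} = g(D)\|\phi\|_2,$$
where $g(D) := \big(\sum_{k=0}^D (2k+1)^2 f(k)^2\big)^{1/2}$ and I used $\sum_k\|\phi_k\|_2^2 = \|\phi\|_2^2$.

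It then remains to check that $g$ is subexponential, which is the step I expect to require the most care, since it is the genuinely SD-specific point. Each summand $a_k := (2k+1)^2 f(k)^2$ satisfies $a_k^{1/k}\to 1$, but subexponential functions need not be monotone, so one cannot simply bound the sum by its largest term. Instead I would write $\ln g(D)^2 = \ln\sum_{k=0}^D a_k \le \ln(D+1) + \max_{0\le k\le D}\ln a_k$ and argue $\max_{0\le k\le D}\ln a_k = o(D)$: given $\ee>0$, choose $N$ with $\ln a_k < \ee k$ for all $k>N$; the finitely many values $\ln a_k$ for $k\le N$ are bounded, so for $D$ large the maximum is at most $\ee D$. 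Hence $\tfrac{1}{D}\ln g(D) \to 0$, i.e. $g(D)^{1/D}\to 1$, so $g$ is subexponential and witnesses Property SD for $(G,l_G)$. (For RD the identical argument shows $g$ stays polynomial, which is why the same scheme handles both regimes.)
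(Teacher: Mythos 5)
Your proof is correct and follows essentially the same route as the paper's: the single-layer bound $\|\phi_k*\psi\|_2\le(2k+1)f(k)\|\phi_k\|_2\|\psi\|_2$ obtained by the same counting of admissible $(l,m)$ pairs plus Cauchy--Schwarz, followed by a triangle inequality and Cauchy--Schwarz over the layers $k\le D$. The only difference is at the very end, where the paper bounds $(2k+1)f(k)$ by $(2L(\phi)+1)f(L(\phi))$ (implicitly treating $f$ as nondecreasing), whereas you keep $g(D)=\bigl(\sum_{k\le D}(2k+1)^2f(k)^2\bigr)^{1/2}$ and verify its subexponentiality directly --- a slightly more careful handling of the fact that subexponential functions need not be monotone.
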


\begin{proof}
Let $\phi,\psi\in\mathbb{C}[G]$ and suppose first that $\phi$ is supported on $C_k$. Define $\psi_l=\psi\cdot\chi_l$ for every $l\in\mathbb{N}$. Then one has
\begin{equation*}
\begin{aligned}
\|(\phi*\psi)\chi_m\|_2&\le\sum_{l\ge0}\|(\phi*\psi_l)\chi_m\|_2&\\
&\le f(k)\|\phi\|_2\sum_{l=|m-k|}^{m+k}\|\psi_l\|_2&\\
&\le f(k)\|\phi\|_2\sum_{l=0}^{2\min(k,m)}\|\psi_{m+k-l}\|_2 &\\ 
&\le f(k)\|\phi\|_2(2k+1)^{1/2}\left(\sum_{l=0}^{2\min(k,m)}\|\psi_{m+k-l}\|_2^2\right)^{1/2}. &\\ 
\end{aligned}
\end{equation*}
Hence, 
\begin{equation*}
\begin{aligned}
\|\phi*\psi\|_2^2&=\sum_{m\ge0}\|(\phi*\psi)\chi_m\|_2^2&\\
&\le f(k)^2(2k+1)\|\phi\|_2^2\sum_{m\ge0}\left(\sum_{l=0}^{2\min(k,m)}\|\psi_{m+k-l}\|_2^2\right) &\\
&\le f(k)^2(2k+1)^2\|\phi\|_2^2\|\psi\|_2^2.&\\
\end{aligned}
\end{equation*}
If $\phi$ is arbitrary, then $\phi=\sum\phi_k$ where $\phi_k=\phi\cdot\chi_k$. Then one has
\begin{equation*}
\begin{aligned}
\|\phi\|&\le\sum_{k\ge0}\|\phi_k\|\le\sum_{k\ge0}f(k)(2k+1)\|\phi_k\|_2&\\
&\le(L(\phi)+1)^{1/2}\left(\sum_{k\ge0}f(k)^2(2k+1)^2\|\phi_k\|_2^2\right)^{1/2}&\\
&\le(L(\phi)+1)^{1/2}(2L(\phi)+1)f(L(\phi))\left(\sum_{k\ge0}\|\phi_k\|_2^2\right)^{1/2}&\\
&\le(2L(\phi)+1)^{3/2}f(L(\phi))\|\phi\|_2.&\\
\end{aligned}
\end{equation*}
Since $g(x)=(2x+1)^{3/2}f(x)$ is subexponential, $G$ has Property SD.
\end{proof}

\begin{lem}
\label{direct_prod_SD}
If $G$ and $H$ have Property SD with decay functions $f_G$ and $f_H$ respectively, then $G\times H$ has Property SD with decay function $f(x)=f_G(x)f_H(x)$.
\end{lem}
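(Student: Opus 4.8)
The plan is to exploit the identification $\mathbb{C}[G\times H]=\mathbb{C}[G]\otimes\mathbb{C}[H]$ sitting inside $C^*_r(G)\otimes_{\min}C^*_r(H)=C^*_r(G\times H)$ and acting on $\ell^2(G)\otimes\ell^2(H)=\ell^2(G\times H)$, equipping $G\times H$ with $l(g,h)=l_G(g)+l_H(h)$ (one checks directly that this is a length function). Under this identification $\|\cdot\|_2$ is the Hilbert tensor norm, so writing $x=\sum_{g,h}c_{g,h}\,\lambda_G(g)\otimes\lambda_H(h)$ one has $\|x\|_2^2=\sum_{g,h}|c_{g,h}|^2$. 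Set $a_h=\sum_g c_{g,h}\lambda_G(g)\in\mathbb{C}[G]$ and $v_g=\sum_h c_{g,h}\delta_h\in\ell^2(H)$; if $x$ is supported on the $l$-ball of radius $D$, then every nonzero $a_h$ has $l_H(h)\le D$, and each nonzero $v_g$ has $l_G(g)\le D$. The key tool is the operator-coefficient version of the decay inequality: if $(K,l_K)$ has decay function $f_K$ and $b=\sum_k b_k\otimes\lambda_K(k)$ has coefficients $b_k\in B(\mathcal H_1,\mathcal H_2)$ supported on the $l_K$-ball of radius $D$, then $\|b\|\le f_K(D)\max\{\|\sum_k b_k^*b_k\|^{1/2},\,\|\sum_k b_kb_k^*\|^{1/2}\}$. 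The scalar case is recovered when $\mathcal H_1=\mathcal H_2=\mathbb{C}$, where both the column sum $\sum_k b_k^*b_k$ and the row sum $\sum_k b_kb_k^*$ collapse to $\|b\|_2^2$.

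With this tool the argument is a two-step peeling. First apply the operator-valued inequality for $H$ to $x=\sum_h a_h\otimes\lambda_H(h)$ (coefficients $a_h\in B(\ell^2(G))$, supported on the $l_H$-ball of radius $D$), giving $\|x\|\le f_H(D)\max\{\|\sum_h a_h^*a_h\|^{1/2},\,\|\sum_h a_ha_h^*\|^{1/2}\}$. Next interpret the column object $\mathbf a:=\sum_g v_g\otimes\lambda_G(g)$, with coefficients $v_g\in B(\mathbb{C},\ell^2(H))$ supported on the $l_G$-ball of radius $D$, and note that $\mathbf a^*\mathbf a=\sum_h a_h^*a_h$. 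Applying the operator-valued inequality for $G$ to $\mathbf a$ gives $\|\sum_h a_h^*a_h\|^{1/2}=\|\mathbf a\|\le f_G(D)\max\{\|\sum_g v_g^*v_g\|^{1/2},\,\|\sum_g v_gv_g^*\|^{1/2}\}$. Now the coefficient sums collapse: $\sum_g v_g^*v_g=\sum_g\|v_g\|_2^2=\|x\|_2^2$ is a scalar, while $\sum_g v_gv_g^*$ is a positive operator on $\ell^2(H)$ with trace $\sum_g\|v_g\|_2^2=\|x\|_2^2$, hence of norm at most $\|x\|_2^2$. Thus $\|\sum_h a_h^*a_h\|^{1/2}\le f_G(D)\|x\|_2$, and the symmetric ``row'' computation gives $\|\sum_h a_ha_h^*\|^{1/2}\le f_G(D)\|x\|_2$ as well. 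Combining the two steps yields $\|x\|\le f_G(D)f_H(D)\|x\|_2$; since $f_Gf_H$ is subexponential whenever $f_G,f_H$ are, $G\times H$ has Property SD with decay function $f_Gf_H$.

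The main obstacle is the operator-valued decay inequality, which is the only non-formal input; everything else is bookkeeping. I would establish it by adapting the scalar argument behind Proposition \ref{prop:jolissaint_1.2.6} to operator coefficients, running the sphere-by-sphere convolution reduction but replacing scalar Cauchy--Schwarz with the corresponding $C^*$-estimates for row and column sums. It is worth emphasizing that the exact constant $f_G(D)f_H(D)$ emerges precisely because in the second application the row and column coefficient norms both equal $\|x\|_2$; if one only wanted \emph{some} subexponential decay function rather than exactly $f_Gf_H$, one could instead feed a joint sphere-convolution estimate for $G\times H$ into Proposition \ref{prop:jolissaint_1.2.6}, at the cost of extra polynomial factors.
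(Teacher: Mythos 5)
Your reduction to two one-variable estimates is in the same spirit as the paper's proof, and your Step 2 is actually fine: since $\mathbf a^*\mathbf a=\sum_h a_h^*a_h$ and $\bigl\|\sum_h a_h^*a_h\bigr\|\le\sum_h\|a_h\|^2\le f_G(D)^2\sum_h\|a_h\|_2^2=f_G(D)^2\|x\|_2^2$, that bound needs only the \emph{scalar} decay inequality for $G$. The genuine gap is Step 1. There you invoke an ``operator-coefficient version of the decay inequality,'' $\|\sum_k b_k\otimes\lambda_K(k)\|\le f_K(D)\max\{\|\sum_k b_k^*b_k\|^{1/2},\|\sum_k b_kb_k^*\|^{1/2}\}$, applied with $b_h=a_h\in B(\ell^2(G))$. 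This is not a formal consequence of the hypothesis of the lemma (which is only the scalar inequality $\|\sum c_k\lambda_K(k)\|\le f_K(D)\|c\|_2$), and in fact it is \emph{false} in general, even for free groups: in $\mathbb F_\infty$ with word length, take $b_{g_ig_j}=e_{ij}\in M_n$ for $1\le i,j\le n$. All words $g_ig_j$ lie in the ball of radius $2$, the row and column sums both equal $nI$, so the right-hand side is $f(2)\sqrt n$ with $f(2)$ a fixed constant, whereas $\sum_{i,j}e_{ij}\otimes\lambda(g_ig_j)=\bigl(\sum_i e_{i1}\otimes\lambda(g_i)\bigr)\bigl(\sum_j e_{1j}\otimes\lambda(g_j)\bigr)$ has norm exactly $n$. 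So no inequality of the form you need can hold with a constant depending only on $D$. (This is precisely why the operator-valued Haagerup inequality of Haagerup--Pisier and Buchholz requires extra ``middle'' terms beyond the row and column sums.) Your suggestion to prove the lemma by ``adapting the argument behind Proposition~\ref{prop:jolissaint_1.2.6}'' also cannot work as stated: that proposition's hypothesis (sphere-by-sphere convolution estimates) is not part of the data you are given, and the counterexample above shows the operator-valued statement fails even where such estimates are available.

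The fix is essentially what the paper does, and it stays entirely scalar: instead of peeling off $H$ first with operator coefficients, bound the convolution fiberwise. Writing $(\phi*\psi)(g,h)=\sum_{h'}(\phi_{h'}*\psi_{h'^{-1}h})(g)$ with $\phi_{h'},\psi_{h''}\in\bC[G]$, apply the scalar decay of $G$ to each fiber convolution (via the triangle inequality over $h'$), obtaining $\|(\phi*\psi)(\cdot,h)\|_2\le f_G(D)\,(\tilde\phi*\tilde\psi)(h)$ where $\tilde\phi(h)=\|\phi_h\|_2$, and only then apply the scalar decay of $H$ to $\tilde\phi*\tilde\psi$. This yields $\|\phi*\psi\|_2\le f_G(D)f_H(D)\|\phi\|_2\|\psi\|_2$ and hence the stated decay function, with no operator-valued input. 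In other words, the order of operations matters: you must collapse the $G$-coefficients to scalars \emph{before} invoking decay for $H$, not after.
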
\
\begin{proof}
Let $l_G$ and $l_H$ be length functions on $G$ and $H$, respectively, such that $G$ has Property SD with respect to $l_G$, and $H$ has Property SD with respect to $l_H$. Define a length function $l$ on $G\times H$ as $l(g,h)=l_G(g)+l_H(h)$.

Given $\phi\in\mathbb{C}[G\times H]$, we define $\phi_h\in\mathbb{C}[G]$ by $\phi_h(g)=\phi(g,h)$. For $\phi,\psi\in\mathbb{C}[G\times H]$, we have

\begin{equation*}
\begin{aligned}
\|\phi*\psi\|_2^2&=\sum_{g\in G}\sum_{h\in H}|\phi*\psi(g,h)|^2&\\
&=\sum_{g\in G}\sum_{h\in H}\left|\sum_{g'\in G}\sum_{h'\in H}\phi(g',h')\psi(g'^{-1}g,h'^{-1}h)\right|^2.&\\
\end{aligned}
\end{equation*}
Since $\sum_{g'\in G}\phi(g',h')\psi(g'^{-1}g,h'^{-1}h)=\phi_{h'}*\psi_{h'^{-1}h}(g)$, we have

\begin{equation*}
\|\phi*\psi\|_2^2=\sum_{g\in G}\sum_{h\in H}\left|\sum_{h'\in H}(\phi_{h'}*\psi_{h'^{-1}h})(g)\right|^2\le\sum_{h\in H}\left(\sum_{h'\in H}\left\|\phi_{h'}*\psi_{h'^{-1}h}\right\|_2\right)^2
\end{equation*}
where the last inequality follows from the definition of the $2$-norm and the triangle inequality. We can now use the fact that $G$ has SD with respect to ${l_G}$ to write

\begin{equation*}
\begin{aligned}
\|\phi*\psi\|_2^2&\le\sum_{h\in H}\left(\sum_{h'\in H} f_G(L_{l_G}(\phi_{h'}))\|\phi_{h'}\|_2\|\psi_{h'^{-1}h}\|_2\right)^2&\\
&\le f_G(L_l(\phi))^2\sum_{h\in H}\left(\sum_{h'\in H}\|\phi_{h'}\|_2\|\psi_{h'^{-1}h}\|_2\right)^2.&\\
\end{aligned}
\end{equation*}
We now define $\tilde{\phi},\tilde{\psi}\in\mathbb{C}[H]$ by $\tilde{\phi}(h)=\|\phi_{h}\|_2$ and $\tilde{\psi}(h)=\|\psi_{h}\|_2$. We can use the fact that $\sum_{h'\in H}\|\phi_{h'}\|_2\|\psi_{h'^{-1}h}\|_2=(\tilde{\phi}*\tilde{\psi})(h)$ and that $H$ has SD with respect to $l_H$ to write

\begin{equation*}
\begin{aligned}
\|\phi*\psi\|_2^2&\le f_G(L_l(\phi))^2\sum_{h\in H}|(\tilde{\phi}*\tilde{\psi})(h)|^2&\\
&=f_G(L_l(\phi))^2\|\tilde{\phi}*\tilde{\psi}\|_2^2&\\
&\le f_G(L_l(\phi))^2f_H(L_l(\phi))^2\|\tilde{\phi}\|_2^2\|\tilde{\psi}\|_2^2.&\\
\end{aligned}
\end{equation*}
We note that $$\|\tilde{\phi}\|_2=\left(\sum_{h\in H}\|\phi_{h}\|_2^2\right)^{1/2}=\left(\sum_{h\in H}\sum_{g\in G}|\phi(g,h)|^2\right)^{1/2}=\|\phi\|_2$$ and similarly for $\psi$. Hence,

\begin{equation*}
\begin{aligned}
 \|\phi*\psi\|_2&\le f_G(L_l(\phi))f_H(L_l(\phi))\|\tilde{\phi}\|_2\|\tilde{\psi}\|_2=f_G(L_l(\phi))f_H(L_l(\phi))\|\phi\|_2\|\psi\|_2&\\ 
\end{aligned}
\end{equation*}
which proves the lemma.
\end{proof}
The following is pretty much contained in Jolissaint's work \cite{jolissaint1990rapidly}, but for the convenience of the reader, we place the argument below, keeping track of the constants. 

\begin{lem}
\label{free_prod_SD}
If $G$ and $H$ have Property SD with decay functions $f_G$ and $f_H$ respectively, then $G*H$ has Property SD with decay function 
$$f(x)=(2x+1)^{7/2}\max(f_G(x),f_H(x)).$$
\end{lem}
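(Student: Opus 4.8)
The plan is to verify the sphere-convolution hypothesis of Proposition~\ref{prop:jolissaint_1.2.6} for $\Gamma=G*H$ and read off the decay function. Equip $\Gamma$ with the length function $l$ sending a reduced word $w=s_1 s_2\cdots$ (with syllables alternating between $G\setminus\{e\}$ and $H\setminus\{e\}$) to $l(w)=\sum_i l_\bullet(s_i)$, where $l_\bullet\in\{l_G,l_H\}$ records the factor containing $s_i$; this is a length function restricting to $l_G,l_H$ on the factors. Since Proposition~\ref{prop:jolissaint_1.2.6} turns a sphere bound with function $F$ into the decay function $(2x+1)^{3/2}F(x)$, it suffices to show, for $\phi$ supported on $C_k$ and $\psi$ supported on $C_n$, the bound
$$\|(\phi*\psi)\chi_m\|_2\le(2k+1)^2\max(f_G(k),f_H(k))\,\|\phi\|_2\,\|\psi\|_2,$$
the left side vanishing automatically unless $|k-n|\le m\le k+n$ by the triangle inequality. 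We may assume $f_G,f_H$ are nondecreasing and at least $1$.

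The heart is the reduced-word analysis of Haagerup and Jolissaint. Fix $w\in C_m$ and expand $(\phi*\psi)(w)=\sum_{uv=w}\phi(u)\psi(v)$ over $u\in C_k$ and $v\in C_n$. For each factorization, uniqueness of reduced forms yields a maximal cancelled middle $c$, so that $u=\widehat u\,c$ and $v=c^{-1}\widehat v$ with no cancellation left between $\widehat u$ and $\widehat v$ beyond at most one merge. In the \emph{clean} case the last syllable of $\widehat u$ and the first of $\widehat v$ lie in different factors, $w=\widehat u\,\widehat v$, and the cancelled length $l(c)$ is pinned to $(k+n-m)/2$; in the \emph{merge} case these two syllables $s,t$ lie in a common factor and fuse into one syllable $r=st\neq e$, so $\widehat u=p_0 s$, $\widehat v=tq_0$, and $w=p_0\,r\,q_0$. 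I would group the terms of $(\phi*\psi)(w)$ into a single clean piece together with merge pieces indexed by the factor ($G$ or $H$) and by the pair $(a,i)=(l(p_0),l(c))$. The relation $a+l_\bullet(s)+i=k$ (and its mirror for $v$) forces $a,i\ge 0$ with $a+i\le k-1$, so each factor contributes at most $\binom{k+1}{2}$ merge pieces; with the clean piece, the number of pieces is $P\le(2k+1)^2$.

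Each piece is bounded separately. For the clean piece $\widehat u$ is the length-$(k-i)$ prefix of $w$, the contribution is the inner product $\sum_{l(c)=i}\phi(\widehat u\,c)\,\psi(c^{-1}\widehat v)$ of slices of $\phi$ and $\psi$, and Cauchy--Schwarz together with $\sum_{\widehat u}\|\phi(\widehat u\,\cdot)\|_2^2\le\|\phi\|_2^2$ (prefixes of a fixed length have disjoint cones) reassembles it with constant $1$. For a merge piece with data $(a,i)$ in the factor $G$, fixing $(a,i)$ renders the decompositions $u=p_0 sc$ and $v=c^{-1}tq_0$ \emph{unique} for all $u,v$ in the supports, and the inner sum over $s,t$ with $st=r$ becomes the factor convolution $(\Phi_c*_G\Psi_c)(r)$, where $\Phi_c(s)=\phi(p_0 sc)$ is supported on the radius-$(k-a-i)$ sphere of $G$ and $\Psi_c(t)=\psi(c^{-1}tq_0)$. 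Invoking Property SD of $G$ in the form $\|\lambda_G(\Phi)\|\le f_G(d)\|\Phi\|_2$ for $\Phi$ on the radius-$d$ sphere (immediate since $\|\Phi*_G\Psi\|_2\le\|\lambda_G(\Phi)\|\,\|\Psi\|_2$), and summing over $c$ and $q_0$ \emph{jointly}, I use that for fixed $i$ each $v\in\supp\psi$ arises from exactly one triple $(c,t,q_0)$, whence $\sum_{q_0}\sum_{l(c)=i}\|\Psi_c\|_2^2\le\|\psi\|_2^2$ and symmetrically $\sum_{p_0}\sum_{l(c)=i}\|\Phi_c\|_2^2\le\|\phi\|_2^2$; this gives $\sum_w|X_{a,i}(w)|^2\le f_G(k)^2\|\phi\|_2^2\|\psi\|_2^2$ for each merge piece $X_{a,i}$. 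A single Cauchy--Schwarz over all $P\le(2k+1)^2$ pieces then produces the displayed sphere bound, and Proposition~\ref{prop:jolissaint_1.2.6} yields the decay function $(2x+1)^{7/2}\max(f_G(x),f_H(x))$.

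The step I expect to be the main obstacle is the multiplicity control in the merge case. Summing over all cancelled middles $c$ at once is fatal: a single $v\in\supp\psi$ then admits on the order of $n$ decompositions $v=c^{-1}tq_0$, injecting an uncontrolled factor of $n$ that Proposition~\ref{prop:jolissaint_1.2.6} cannot absorb, as its constant must depend on $k$ alone. Fixing \emph{both} the prefix length $a$ and the cancellation length $i$ is precisely what restores unique decompositions on both sides and lets the $c$- and $q_0$-sums collapse to $\|\psi\|_2^2$; verifying this bijection carefully, and checking that the flanking-syllable lengths remain in $[0,k]$ so that only $O(k^2)$ pieces occur, is the delicate combinatorial core on which the exponent $7/2$ rests.
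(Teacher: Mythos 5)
Your proof is correct and arrives at the same decay function, but it is organized quite differently from the paper's. The paper works in two levels: it first stratifies $G*H$ by \emph{syllable count} (the sets $\Lambda_j$), quotes Jolissaint's Lemma 2.2.4 for the convolution estimate on syllable-count spheres (observing that the constant there is $\max(f_G(L(\phi)),f_H(L(\phi)))$ rather than a function of the syllable count), and then converts this into the hypothesis of Proposition \ref{prop:jolissaint_1.2.6} by writing $\phi=\sum_{j\le k}\phi\cdot\chi_{\Lambda_j}$ and $\psi=\sum_{j}\psi\cdot\chi_{\Lambda_j}$ and performing a double resummation; the factor $(2k+1)^2$ arises from that resummation. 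You instead run the Haagerup cancellation analysis directly on the length-spheres $C_k$, replacing the syllable-count stratification by the stratification over the pairs $(a,i)=(l(p_0),l(c))$, and the same factor $(2k+1)^2$ arises as the number of pieces. Your key observation --- that fixing both the prefix length and the cancelled length restores unique decompositions on both sides, so that the sums over cancelled middles collapse to $\|\phi\|_2^2$ and $\|\psi\|_2^2$ --- is exactly the role played in the paper by fixing the syllable counts of the two factors (which pin down the number of cancelled syllables), so the two stratifications do the same job. What your version buys is self-containedness: you do not need Jolissaint's Lemma 2.2.4 as a black box, and the accounting of where the bound $L(\phi)\le k$ enters (only through the lengths of individual merged syllables, never through the number of syllables) is completely explicit. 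One shared caveat: both arguments implicitly assume that nontrivial elements have length at least $1$ (you need this for $a+i\le k-1$ and for uniqueness of the length-$a$ prefix; the paper needs it for $C_k\subset\bigcup_{j\le k}\Lambda_j$); this is harmless, since one may replace $l$ by $l+1$ on nontrivial elements without affecting SD, but it deserves a sentence.
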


\begin{proof}
Suppose $G$ and $H$ have Property SD with respect to the length functions $l_G$ and $l_H$, respectively. Define a length function $l_{G*H}$ on $G*H$ by $$l_{G*H}(g_1h_1...g_nh_n)=\sum_{i=1}^n(l_G(g_i)+l_H(h_i))\hspace{0.5cm}(g_1h_1...g_nh_n\text{ a reduced word}).$$
We also define
$$\Lambda_m=\{g\in G*H|g=s_1...s_m\text{ as a reduced word}\}.$$
Following arguments laid out in the proof of \cite[Lemma 2.2.4]{jolissaint1990rapidly}, one can show that for all $k,l,m\in\mathbb{N}$ and $\phi,\psi\in\mathbb{C}[G*H]$ with support in $\Lambda_{k}$ and $\Lambda_{l}$ respectively, 
$$\|(\phi*\psi)\chi_{\Lambda_m}\|_2\le\max(f_G(L_{l_{G*H}}(\phi)),f_H(L_{l_{G*H}}(\phi)))\|\phi\|_2\|\psi\|_2$$
where $\chi_{\Lambda_m}$ is the indicator function on $\Lambda_m$.

It now suffices to show that $G*H$ satisfies the hypotheses of Proposition \ref{prop:jolissaint_1.2.6}. Suppose $k,l,m\in\mathbb{N}$ and $\phi,\psi\in\mathbb{C}[G*H]$ are supported on $C_k$ and $C_l$ respectively. From the definition of $l_{G*H}$ it follows that $C_k\subset\bigcup_{i=0}^k\Lambda_k$ so 
$$\phi=\sum_{i=0}^k\phi_i\hspace{1cm}\text{and}\hspace{1cm}\psi=\sum_{j=0}^l\psi_j$$
where $\phi_i=\phi\cdot\chi_{\Lambda_i}$ and $\psi_j=\psi\cdot\chi_{\Lambda_j}$. Set $g(x)=\max(f_G(x),f_H(x))$ and fix $j\in\{0,...,k\}$. We have 
\begin{equation*}
\begin{aligned}
 \|(\phi_j*\psi)\chi_{m}\|_2^2&\le\sum_{i=0}^m\|(\phi_j*\psi)\chi_{\Lambda_i}\|_2^2&\\
 &\le g(k)^2\|\phi_j\|_2^2\sum_{i=0}^m\left(\sum_{k=|j-i|}^{j+i}\|\psi_k\|_2\right)^2&\\
&\le(2j+1)g(k)^2\|\phi_j\|_2^2\sum_{i=0}^m\sum_{k=0}^{2\min(j,i)}\|\psi_{j+i-k}\|_2^2&\\
 &\le(2j+1)^2g(k)^2\|\phi_j\|_2^2\|\psi\|_2^2.
 \end{aligned}
\end{equation*}
Hence,
\begin{equation*}
\begin{aligned}
    \|(\phi*\psi)\chi_m\|_2&\le\sum_{j=0}^k\|(\phi_j*\psi)\chi_m\|_2&\\
    &\le g(k)\|\psi\|_2\sum_{j=0}^k(2j+1)\|\phi_j\|_2&\\
    &\le(2k+1)(k+1)g(k)\|\phi\|_2\|\psi\|_2&\\
    &\le(2k+1)^{2}g(k)\|\phi\|_2\|\psi\|_2
 \end{aligned}
\end{equation*}
for $k\ge1$. Therefore, by Proposition \ref{prop:jolissaint_1.2.6}, $G*H$ has property SD with decay function $f(x)=(2x+1)^{7/2}g(x)$.
\end{proof}

\subsection{An SD group without RD and superpolynomial amenable subgroups}

The following result adapts the construction of Sapir from \cite{sapir2015rapid} to construct a group, albeit not finitely generated, that has SD but not RD, and all of its amenable subgroups are of polynomial growth.

\begin{thm}
\label{thm: sapir-sd-not-rd}
Consider the group $G=*_{n\ge1}\mathbb{Z}^{n }$ together with the length function $l$ defined as follows. Fix $f:\bN\to\bN$ a strictly increasing function such that $f(1) = 1$ and $f^{-1}(N)$ is $o\left(\frac{N}{\ln(N)}\right)$. Let $a_{1,n},...,a_{n,{n}}$ denote the standard generators of $\mathbb{Z}^{n}$; we set $l(a_{i,n})=f(n)$ for all $a_{i,n}$. Then $G$ has SD (with respect to $l$) but not RD.
\end{thm}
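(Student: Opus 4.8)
The plan is to prove two separate things: that $(G, l)$ has SD, and that $(G, l)$ fails RD. I expect the failure of RD to be the easier direction and the verification of SD to be the main obstacle.

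For the failure of RD, I would exploit the fact that $G$ contains the abelian subgroups $\mathbb{Z}^n$ whose generators all have $l$-length $f(n)$. Consider an element supported deep inside a single copy of $\mathbb{Z}^n$: a group algebra element like the indicator function of all words in $\mathbb{Z}^n$ of word-length at most $r$ in the generators $a_{1,n}, \dots, a_{n,n}$. Such an element is supported on the $l$-ball of radius roughly $r \cdot f(n)$, but the number of such lattice points grows like $r^n$ (polynomially in $r$ but with degree $n$), and since $\mathbb{Z}^n$ is amenable the operator norm of this positive-coefficient element equals its $\ell^1$-norm, i.e., the number of support points. Thus the ratio $\|x\|/\|x\|_2$ is of order $r^{n/2}$ while the $l$-ball radius is about $rf(n)$. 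The point is that for fixed radius one can choose $n$ large to make the exponent $n$ as large as desired relative to $f(n)$; more precisely, the condition that $f^{-1}(N) = o(N/\ln N)$ is exactly what forces the growth to outpace any fixed polynomial in the $l$-radius, since it controls how many independent generators of a given small length are available. I would make this precise by fixing the radius $N = r f(n)$ and computing that no polynomial bound $f(N) = N^C$ can hold uniformly.

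\begin{proof}[Proof sketch for SD]
For the positive direction, the strategy is to reduce to the permanence results already established, namely the free product estimate of Lemma \ref{free_prod_SD} and the structure of $\mathbb{Z}^n$. Each free factor $\mathbb{Z}^n$, equipped with the rescaled length $l(a_{i,n}) = f(n)$, has SD (indeed RD, being abelian of polynomial growth), so the real issue is controlling the \emph{infinite} free product and ensuring the resulting decay function is subexponential despite the constants deteriorating with $n$. The key quantity is the number of elements of $G$ in the $l$-ball of radius $N$. I would estimate this by noting that a reduced word contributing to the $l$-ball of radius $N$ can only use generators $a_{i,n}$ with $f(n) \le N$, i.e., $n \le f^{-1}(N)$; and each syllable from $\mathbb{Z}^n$ of $l$-length $\ell$ is a lattice point in $\mathbb{Z}^n$ of word-norm $\ell/f(n)$, of which there are at most polynomially many in $\ell/f(n)$.
\end{proof}

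The hard part will be the growth/counting estimate showing that the number of elements in the $l$-ball of radius $N$, together with the free-product combinatorics, yields a \emph{subexponential} decay function. The delicate interplay is that although each factor contributes only polynomially, there are up to $f^{-1}(N)$ available factors, and free-product words over many factors can grow exponentially in the number of syllables; one must check that the total count $\gamma(N)$ satisfies $\ln \gamma(N) = o(N)$. This is precisely where the hypothesis $f^{-1}(N) = o(N/\ln N)$ enters: the logarithm of the ball size is controlled by (number of available factors) $\times \ln(\text{per-factor contribution})$, which is on the order of $f^{-1}(N) \cdot \ln N = o(N)$. I would organize this as an explicit upper bound on $\gamma(N)$, verify $\ln \gamma(N) = o(N)$ using the hypothesis, and then invoke an $\ell^1$-over-$\ell^2$ Cauchy--Schwarz bound (as in the amenable case of Proposition \ref{prop: amen-subexp-growth-is-SD}) in tandem with the syllable-length bookkeeping of Proposition \ref{prop:jolissaint_1.2.6} to convert the subexponential ball-growth into a subexponential decay function for $(G, l)$.
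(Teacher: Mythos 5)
Your plan for the SD direction does not work, and the failure is structural rather than a missing detail. You propose to show that the $l$-ball of radius $N$ has subexponential size $\gamma(N)$ and then apply the $\ell^1$-over-$\ell^2$ Cauchy--Schwarz bound. But $(G,l)$ has \emph{exponential} growth: the subgroup generated by $a_{1,1}$ and $a_{1,2}$ is a free group of rank $2$ whose generators have $l$-lengths $1$ and $f(2)$, so the ball of radius $N$ already contains at least $3^{\lfloor N/f(2)\rfloor}$ elements. Your heuristic that $\ln\gamma(N)$ is of order (number of available factors)$\times\ln$(per-factor contribution) $\approx f^{-1}(N)\ln N$ forgets that a reduced word may alternate between two factors arbitrarily many times; this is exactly the exponential free-product combinatorics you flag as the ``delicate interplay,'' and it does not resolve in your favor. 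Consequently $\sqrt{\gamma(N)}$ is exponential and the $\ell^1/\ell^2$ route cannot yield SD. The paper instead iterates the free-product Haagerup inequality (Lemma \ref{free_prod_SD}) over the finite truncations $\bZ^{1}*\cdots*\bZ^{n}$, obtaining a decay function $(2k+1)^{4n}$ for the $n$-th truncation, and then observes that any $\phi$ with $L_l(\phi)=N$ is supported in the truncation with $n=f^{-1}(N)$, giving $\|\phi\|\le\exp(4f^{-1}(N)\ln(3N))\|\phi\|_2$; the hypothesis $f^{-1}(N)=o(N/\ln N)$ enters precisely here. You cite Lemma \ref{free_prod_SD} at the outset but then abandon it for ball-counting; following it through with the explicit dependence of the decay exponent on the number of factors is the actual content of the proof.

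The RD direction also has a gap, though a fixable one. RD is an existential property over length functions, so to disprove it you must defeat every length function $l'$ on $G$, not just the given $l$. The paper does this by noting that for any $l'$ the indicator of the word-ball of radius $n$ in a fixed $\bZ^{2(C+1)}$ has $l'$-length at most $n\max_i l'(a_i)$ (linear in $n$) while its norm ratio grows like $n^{C+1}$, killing any degree-$C$ polynomial. Your argument as written only addresses $l$. Separately, your claim that the hypothesis $f^{-1}(N)=o(N/\ln N)$ is ``exactly what forces'' the failure of RD is a misattribution: failure of RD needs no hypothesis on $f$ at all, only the presence of $\bZ^n$ subgroups for every $n$; the hypothesis is used solely for SD.
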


\begin{proof}
By Lemma \ref{direct_prod_SD}, the decay function with respect to word length $l_w$ for $\mathbb{Z}^{n}$ is $g_{l_w,n}(k)=(2k+1)^{n/2}$, so the decay function with respect to $l$ is 
$$g_{l,n}(k)=\left(\frac{2k}{f(n)}+1\right)^{n/2}.$$
Therefore by Lemma \ref{free_prod_SD}, the decay function for $\mathbb{Z}^{{1}}*\dotsm *\mathbb{Z}^{n}$ is 
\begin{align*}
    g_n(k)&=(2k+1)^{7(n-1)/2}\max(g_{l,1}(k),g_{l,2}(k),...,g_{l,n}(k))\\
    &\le(2k+1)^{7(n-1)/2}\left(2k+1\right)^{n/2} \\
    &\le (2k+1)^{4n}.
\end{align*}
If $\phi\in\mathbb{C}[G]$, then $\phi\in\mathbb{C}\left[\mathbb{Z}^{1}*\dotsm *\mathbb{Z}^{f^{-1}(L_l(\phi))}\right]$. Hence, denoting $N = L_l(\phi)$, we have
\begin{align*}
    \|\phi\|&\le g_{f^{-1}(N)}(N)\|\phi\|_2 \\
    &\le (2N+1)^{4f^{-1}(N)}\|\phi\|_2\\
    &\leq \exp(4f^{-1}(N)\ln(3N))\|\phi\|_2
\end{align*}

which, in combination with the assumption that $f^{-1}(N)$ is $o\left(\frac{N}{\ln(N)}\right)$, shows $G$ has property SD.

To show $G$ does not have RD, let $l'$ be an arbitrary length function on $G$, and suppose for the purpose of contradiction that that there exists a polynomial $P(k)$ of degree $N$ that is a decay function for $G$. Define $\phi_n\in\mathbb{C}\left[\mathbb{Z}^{2(N+1)}\right]$ as $$\phi_n=\sum_{a\in B_n}a$$
where $B_n$ is the ball of radius $n$ in $\mathbb{Z}^{2(N+1)}$ with respect to word length. Define $\xi_m=\frac{\phi_{m}}{\|\phi_m\|_2}$. Since balls of increasing radius are F{\o}lner sets in $\mathbb{Z}^{2(N+1)}$, for every $\epsilon>0$ we can find $m$ large enough such that 
$$\|a*\xi_m-\xi_m\|_2<\frac{\epsilon}{|B_n|}$$ 
for all $a\in B_n$. Since $\phi_n*\xi_m=\sum_{a\in B_n}a*\xi_m$, it follows that \
$$\|\phi_{n}*\xi_m-|B_n|\xi_m\|_2\le\sum_{a\in B_n}\|a*\xi_m-\xi_m\|_2<\epsilon.$$
Hence, $\|\phi_n*\xi_m\|_2\to|B_n|$ as $m\to\infty$. In particular, $\|\phi_n\|\ge|B_n|$, so 
$$\frac{\|\phi_n\|}{\|\phi_n\|_2}\ge\sqrt{|B_n|}\sim(2n+1)^{N+1}.$$
If we consider $\phi_n$ as an element of $G$, then $L_{l'}(\phi_n)\le n\max_i l'(a_i)$ where maximum is taken over the standard generators of $\bZ^{2(N+1)}$. Therefore,
$$\|\phi_n\|\ge\left(\frac{L_{l'}(\phi_n)}{\max_il'(a_i)}+1\right)^{N+1}\|\phi_n\|_2$$ 
which contradicts our assumption that the decay function for $(G,l')$ is a polynomial of degree $N$. Hence, $G$ does not have property RD.
\end{proof}

\subsection{Graph product estimates}
\label{sect: graph-prod-SD}
The following is immediate from the work of Ciobanu--Holt--Rees \cite{graphprodRD}, but for the convenience of the reader, we extract here their proof, making the necessary adaptations that are needed for SD. One of the reasons we record this proof (despite the fact that the result can be recovered from our Theorem \ref{thm: no distortion}), is because the precise decay function estimates are sharper.

\begin{thm}
\label{thm:graph-prod-sd}
    Suppose that $G=G(\Gamma,G_v,v\in V)$ is a graph product of groups over a finite simplicial graph $\Gamma=(V,E)$, and suppose that each vertex group $G_v$ satisfies SD. Then $G$ satisfies SD.
\end{thm}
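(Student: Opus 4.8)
The plan is to prove Theorem \ref{thm:graph-prod-sd} by induction on the number of vertices $|V|$, reducing the graph product to a sequence of amalgamated free product (or direct product) constructions and then invoking the permanence results already established, namely Lemma \ref{free_prod_SD}, Lemma \ref{direct_prod_SD}, and our main amalgam theorem, Theorem \ref{thm: no distortion}. The base case $|V|=1$ is trivial since then $G = G_v$ has SD by hypothesis. For the inductive step, I would isolate a single vertex $v_0 \in V$ and let $\Gamma'$ be the full subgraph on $V\setminus\{v_0\}$, with corresponding graph product $G' = G(\Gamma', G_v, v\neq v_0)$, which has SD by the inductive hypothesis.

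The key structural fact is that a graph product decomposes as an amalgamated free product along the link of a vertex. Explicitly, write $\mathrm{lk}(v_0)$ for the set of vertices adjacent to $v_0$ and let $G_{\mathrm{lk}(v_0)}$ denote the graph product of the vertex groups in the link (which is itself a subgraph product, hence has SD by induction). Then one has the standard decomposition
\begin{equation*}
G = G(\Gamma) \cong \left(G_{v_0} \times G_{\mathrm{lk}(v_0)}\right) *_{G_{\mathrm{lk}(v_0)}} G',
\end{equation*}
where $G_{\mathrm{lk}(v_0)}$ embeds in $G_{v_0}\times G_{\mathrm{lk}(v_0)}$ as the second factor and embeds in $G'$ as the corresponding subgraph product. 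By Lemma \ref{direct_prod_SD} the direct product $G_{v_0}\times G_{\mathrm{lk}(v_0)}$ has SD, and by induction $G'$ has SD; the common subgroup $A = G_{\mathrm{lk}(v_0)}$ sits inside both factors. I would then apply Theorem \ref{thm: no distortion} to conclude that $G$ has SD, provided the length functions restrict compatibly to $A$.

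The main obstacle, and the point requiring care, is precisely the hypothesis of Theorem \ref{thm: no distortion} that the two length functions agree on the amalgam $A$. This is where I would exploit the remark following Theorem \ref{thm: no distortion}: in a graph product equipped with word length, the word length functions on the two factors automatically agree upon restriction to the common subgroup, since a reduced syllable expression of an element of $A$ inside either factor uses only generators from the link and is measured identically. Concretely, one should build the length function on each factor from a fixed generating set compatible across the decomposition (the union of the vertex generating sets), so that $L|_A$ is unambiguous; then the equal-restriction hypothesis of Theorem \ref{thm: no distortion} holds verbatim and no distortion correction is needed. The remaining bookkeeping is to verify that the SD decay function produced at each stage stays subexponential: since Lemma \ref{direct_prod_SD} multiplies decay functions, Theorem \ref{thm: no distortion} applies the transform $\tilde f(x) = (2x+1)^{7/2} f(2x)$, and the induction runs over the finite set $V$, only finitely many such operations are composed, and each preserves subexponentiality (products and the transform $\tilde f$ of subexponential functions remain subexponential). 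Hence the final decay function for $G$ is subexponential, completing the induction.
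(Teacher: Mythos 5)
Your proof is correct in outline, but it is a genuinely different argument from the one the paper gives for this theorem. The paper proves Theorem \ref{thm:graph-prod-sd} directly, by adapting the Ciobanu--Holt--Rees machinery: it puts the canonical length $l_G(g)=\sum_i l_{v_i}(y_i)$ on the graph product, controls convolutions via syllable-length strata $\Lambda_{(k)}$ and the auxiliary functions $\phi^{(p)}_{(k-p)}$, proves the analogue of their Proposition 4.3 (Proposition \ref{prop: graph_prod}), and then feeds the resulting estimate into Jolissaint's criterion (Proposition \ref{prop:jolissaint_1.2.6}). You instead induct on $|V|$ using the star--link decomposition $G(\Gamma)\cong (G_{v_0}\times G_{\mathrm{lk}(v_0)})*_{G_{\mathrm{lk}(v_0)}}G(\Gamma\setminus\{v_0\})$ together with Lemma \ref{direct_prod_SD} and Theorem \ref{thm: no distortion}. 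This is exactly the alternative route the authors flag in the introduction and in the sentence preceding the theorem (``the result can be recovered from our Theorem \ref{thm: no distortion}''), and it is sound: the amalgam hypothesis $L_G|_A=L_H|_A$ holds because both restrictions are the canonical length on the link subgroup, and Lemmas \ref{lem: L = L_G} and \ref{lem: L-well-def} let the induction close on that canonical length. What your approach buys is modularity; what it costs is sharpness of the decay function, since each inductive step applies $x\mapsto \tilde f(x)=(2x+1)^{7/2}f(2x)$, so after $|V|$ steps the argument of the vertex decay functions is dilated by $2^{|V|}$ and the polynomial prefactor has degree growing linearly in $|V|$ --- still subexponential, but strictly worse than the bound the paper's direct proof extracts, which is the authors' stated reason for including the CHR-style argument. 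Two small points to tighten: the vertex groups are not assumed finitely generated, so you should phrase the compatibility of lengths in terms of the fixed vertex length functions $l_v$ rather than ``the union of the vertex generating sets''; and you should state the strengthened induction hypothesis explicitly (SD of $G(\Gamma')$ \emph{with respect to the canonical length} $\sum_i l_{v_i}(y_i)$, not merely with respect to some length), since that is what makes the restrictions to $A$ agree at every stage.
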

\begin{proof}
Following the notation in \cite{graphprodRD}, let $\mathcal{K}$ be the set of cliques of $\Gamma$ and $\mathcal{K}_m$ be the cliques of size $m$. Given a subset $J$ of $V$, define $G_J$ as the subgroup generated by the elements of the subgroups $G_v$ where $v\in J$. 
Every $g\in G$ can be expressed as a product $y_1...y_k$ with each $y_i$ in a vertex group $G_{v_i}$. We will refer to such a representation as an \textit{expression}. We will call the $y_i$ the \textit{syllables} of the expression and say that the expression has \textit{syllable length k}. We define a \textit{syllable length} of $g$, denoted $\lambda(g)$, as the minimum number of syllables needed to express $g$. We say that an expression for $g$ is \textit{reduced} if it has syllable length $\lambda(g)$. Analogous to the proof of Lemma \ref{free_prod_SD}, we define $\Lambda_k=\{g\in G|\lambda(g)=k\}$.

For each $v\in V$, let $l_v$ be a length function on $G_v$ such that $G_v$ has property SD with respect to $l_v$. Given $g\in G$ and a reduced expression $g=y_1...y_k$ with $y_j\in G_{v_j}$ we define a length function $l_G$ on $G$ by 
$$l_G(g)=\sum_{i=1}^kl_{v_i}(y_i).$$ 
By Lemma \ref{direct_prod_SD}, if $J\in\mathcal{K}$, then $G_J$ has SD with respect to $l_G$.

Let $\chi_{k}$ be the characteristic function on $C_k$ and $\chi_{(k)}$ the characteristic function on $\Lambda_k$. We define $\phi_{k}$ as the product $\phi\cdot\chi_{k}$ and $\phi_{(k)}$ as the product $\phi\cdot\chi_{(k)}$. 

We now show the equivalent of Proposition 4.3 in \cite{graphprodRD}:
\begin{prop}
\label{prop: graph_prod}
    There exists a subexponential function $g(x)$ such that for all $\phi,\psi\in\mathbb{C}[G]$, $k,l,m\in\mathbb{N}$, and $|k-l|\le m\le k+l$,
    $$\|(\phi_{(k)}*\psi_{(l)})_{(m)}\|_2\le g(L_{l_G}(\phi_{(k)}))\|\phi_{(k)}\|_2\|\psi_{(l)}\|_2.$$
\end{prop}

\begin{proof}
Following the definitions in Section 5 of \cite{graphprodRD}; given $\phi_{(k)},\psi_{(l)}$ and $p\ge0$, we define $\phi_{(k-p)}^{(p)}$ and $^{(p)}\psi_{(l-p)}$ by
$$\phi_{(k-p)}^{(p)}(u)=\begin{cases}
 \sqrt{\sum_{w\in\Lambda_p}|\phi_{(k)}(uw)|^2}&\text{ if }u\in\Lambda_{k-p},\\
 0&\text{ otherwise};
\end{cases}$$
$$^{(p)}\psi_{(l-p)}(u)=\begin{cases}
 \sqrt{\sum_{w\in\Lambda_p}|\psi_{(l)}(w^{-1}u)|^2}&\text{ if }u\in\Lambda_{l-p},\\
 0&\text{ otherwise}.
\end{cases}$$
Suppose $m=k+l-q$ with $q\ge0$. Given $g_1\in\Lambda_{k-q+p}$, $g_2\in\Lambda_{l-q+p}$, and $p\ge0$, define $^{g_1}\phi^{(p)}_{(q-2p)}$ and $^{(p)}\psi_{(q-2p)}^{g_2}$ by
$$^{g_1}\phi^{(p)}_{(q-2p)}(v)=
 \phi_{(k-p)}^{(p)}(vg_1)\hspace{1cm}^{(p)}\psi^{g_2}_{(q-2p)}=  \prescript{(p)}{}\psi_{(l-p)}(g_2v).$$
Following the arguments in Section 6 of \cite{graphprodRD}, one can show that there exists a polynomial $Q(k)$ such that
$$\|(\phi_{(k)}*\psi_{(l)})_{(m)}\|_2^2\le Q(k)\sum_{p=1}^{p=\lfloor q/2\rfloor}\sum_{J\in\mathcal{K}_{q-2p}}\sum_{\substack{g_1\in\Lambda_{k-q+p} \\ g_2\in\Lambda_{l-q+p}}}\|^{g_1}\phi^{(p)}_{(q-2p)}*_J\prescript{(p)}{}\psi_{(q-2p)}^{g_2}\|_{2;J}^2$$
where $*_J$ is the convolution is over $G_J$ and $\|\cdot\|_{2;J}$ is the 2-norm in $G_J$. Since SD holds with respect to $l_G$ in each of the $G_J$, we have
$$\|^{g_1}\phi^{(p)}_{(q-2p)}*_J\prescript{(p)}{}\psi_{(q-2p)}^{g_2}\|_{2;J}^2\le f_J\left(L_{l_G;J}\left(^{g_1}\phi_{(q-2p)}^{(p)}\right)\right)^2\|^{g_1}\phi_{(q-2p)}^{(p)}\|_{2;J}^2\|^{(p)}\psi_{(q-2p)}^{g_2}\|^2_{2;J}$$ where $f_J$ is the decay function on $G_J$ and $L_{l_G;J}(\phi)$ is the maximum $l_G$-length of any element in the support of $\phi$ restricted to $J$.

Note that if $g_1\in\Lambda_{k-q+p},|J|=q-2p$, and some element $v\in J$ is in the support of $^{g_1}\phi_{(q-2p)}^{(p)}$, then there exists $w\in\Lambda_p$ such that $vg_1w$ is in the support of $\phi_{(k)}$. Since $\lambda(v)\le|J|=q-2p$ and $\lambda(g_1)=k-q+p$, this implies $\lambda(v)=q-2p$. In particular, $\lambda(vg_1w)=\lambda(v)+\lambda(g_1)+\lambda(w)$, so $l_G(v)\le l_G(vg_1w)$. It follows that $L_{l_G;J}\left(^{g_1}\phi_{(q-2p)}^{(p)}\right)\le L_{l_G}(\phi_{(k)})$.

Define $f$ as $f(x)=\max_{J\in\mathcal{K}}(f_J(x))$. As there are finitely many cliques, $f$ is subexponential. Once again, following arguments in Section 6 of \cite{graphprodRD} and using the fact that $L_{l_G;J}\left(^{g_1}\phi_{(q-2p)}^{(p)}\right)\le L_{l_G}(\phi_{(k)})$ for all $0\le p\le\lfloor q/2\rfloor$, $g_1\in \Lambda_{k-q+p}$, and $J\in\mathcal{K}_{q-2p}$, one can show that there exists a polynomial $P$ such that
$$\|(\phi_{(k)}*\psi_{(l)})_{(m)}\|_2^2\le P(L_{l_G}(\phi_{(k)}))f(L_l(\phi_{(k)}))^2\|\phi_{(k)}\|_2^2.$$ 
Letting $g(x)=\sqrt{P(x)}f(x)$ completes the proof of Proposition \ref{prop: graph_prod}.
\end{proof}

We now complete the proof of Theorem \ref{thm:graph-prod-sd} by showing that $G$ satisfies the hypotheses of Proposition \ref{prop:jolissaint_1.2.6}. Suppose $\phi,\psi\in\mathbb{C}[G]$ are supported on $C_k$ and $C_l$, respectively, and $|k-l|\le m\le k+l$. Using arguments nearly identical to those in Section 4 of \cite{graphprodRD} together with Proposition \ref{prop: graph_prod} one can show that for a fixed $j$,
\begin{equation*}
\begin{aligned}
\|(\phi_{(j)}*\psi)_m\|_2^2&\le(2j+1)g(L(\phi_{(k)}))^2\|\phi_{(j)}\|_2^2\sum_{p=0}^m\sum_{i=|j-p|}^{j+p}\|\psi_{(i)}\|&\\
&\le(2j+1)^2g(L(\phi_{(k)}))^2\|\phi_{(j)}\|_2^2\|\psi\|_2^2.
\end{aligned}
\end{equation*}
We now have
\begin{equation*}
\begin{aligned}
    \|(\phi*\psi)_m\|_2^2&\le\left(\sum_{j=0}^k\|(\phi_{(j)}*\psi)_m\|\right)^2&\\
    &\le(k+1)\sum_{j=0}^k\|(\phi_{(j)}*\psi)_m\|^2&\\ &\le(k+1)\sum_{j=0}^k(2j+1)^2g(k)^2\|\phi_{(j)}\|_2^2\|\psi\|_2^2&\\
    &\le(k+1)(2k+1)^2g(k)^2\|\phi\|_2^2\|\psi\|_2^2.&\\
\end{aligned}
\end{equation*}
Hence, $G$ satisfies the hypotheses of Proposition \ref{prop:jolissaint_1.2.6}, which proves Theorem \ref{thm:graph-prod-sd}.
\end{proof}

\subsection{Selflessness for free products}
\label{sect: selfless}

As an application of considering property SD, we prove here selflessness for a more general family of free products than considered in \cite{amrutam2025strictcomparisonreducedgroup}. 

\begin{thm}\label{subexponential selfless free products}
    Let $G$ and $H$ be countable groups admitting property SD, and such that $G$ has a torsion free element, and $H$ is infinite. Then $C^*_r(G*H)$ is selfless. 
\end{thm}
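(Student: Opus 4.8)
The plan is to establish selflessness of $C^*_r(G*H)$ via the criterion introduced in \cite{amrutam2025strictcomparisonreducedgroup}, namely by producing, for any finite collection of elements and any $\ee>0$, an isometry (or a family of near-isometries) $v$ in the algebra that approximately commutes with the given elements while moving a designated element far from the original in a suitably controlled way. Selflessness is an effective mixed-identity-freeness together with a norm-smallness condition on commutators of free Haar-type words, so the first step is to recall the precise working definition from \cite{amrutam2025strictcomparisonreducedgroup} and reduce the statement to constructing suitable sequences of group elements $w_n \in G*H$ that are ``freely independent enough'' from a fixed finite set $F\subset G*H$. Since $G$ has a torsion-free element $g_0$ and $H$ is infinite, I would use the powers $g_0^n$ together with a sequence of distinct elements $h_n\in H$ to build words $w_n = g_0^{k_n} h_n g_0^{k_n}\cdots$ whose reduced syllable length grows and which therefore become ``generic'' in the free product.

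The key technical input is that $G$ and $H$ have property SD, which (via Lemma \ref{free_prod_SD}) passes to $G*H$: the free product has SD with decay function $(2x+1)^{7/2}\max(f_G,f_H)$. The reason this is the right replacement for RD in \cite{amrutam2025strictcomparisonreducedgroup} is the point flagged in the introduction: the original selflessness argument composes the decay estimate with a length-blowup coming from the construction of the approximating isometries, and RD survives composition with polynomials whereas SD does not survive composition with an arbitrary subexponential function. So the heart of the proof is to arrange the construction so that the only compositions that occur are between the SD decay function and a \emph{linear} (or at worst polynomial) length bound. Concretely, I would choose the words $w_n$ so that their $L$-lengths in $G*H$ are controlled linearly in the syllable length, and so that conjugating the fixed finite set $F$ by the partial isometry built from the $w_n$ increases lengths only by a bounded additive amount; this keeps the relevant estimate of the form $f(\text{const}\cdot D)$, which remains subexponential.

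The mechanism by which SD enters the norm estimate is exactly the inequality $\|x\|\le f(D)\|x\|_2$ for $x$ supported on the $L$-ball of radius $D$: after the near-isometry is applied, the error terms one must bound are group-algebra elements supported on balls of controlled radius, so their operator norms are dominated by a subexponential factor times their $2$-norms, and the $2$-norms can be made arbitrarily small by pushing $n\to\infty$ (this is where the infinitude of $H$ and the infinite order of $g_0$ guarantee that the supporting sets escape to infinity and the overlaps with $F$-translates vanish). Following the noncommutative Khintchine-type approach of Ricard--Xu \cite{KhinRX} as in \cite{hayes2025selflessreducedfreeproduct}, one replaces the crude RD-style single-word estimate with a square-function bound over a family of free words; this is precisely what allows one to avoid the bad composition, since the Khintchine inequality contributes only universal constants rather than an additional length-dependent decay factor.

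The main obstacle, and the step I expect to require the most care, is verifying that the Khintchine/square-function estimate can be combined with the SD bound without reintroducing a composition of two subexponential functions. One must check that the family of words indexing the Khintchine inequality is genuinely free (or Haar-unitary-like) in $C^*_r(G*H)$ so that the Ricard--Xu inequality applies with its clean constants, and simultaneously that each individual word has $L$-length growing only linearly, so that the SD factor $f$ is evaluated at an argument that is linear in the parameter controlling the approximation. Checking freeness amounts to a syllable-length/reduced-word bookkeeping argument in the amalgamated (here, ordinary free product) structure, using that $g_0$ has infinite order so its powers never collapse and that the $h_n$ are distinct in $H$; the subtlety is ensuring the interleaving pattern $g_0^{k}h\,g_0^{k}\cdots$ produces reduced words of the expected length even after multiplication by elements of $F$, which is where the uniqueness of reduced expressions in $G*H$ must be invoked carefully.
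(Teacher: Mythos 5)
You have correctly identified the two essential ingredients the paper uses: the layered Khintchine-type estimate of Ricard--Xu as packaged in \cite{hayes2025selflessreducedfreeproduct} (Lemma \ref{lem: RDP estimate on layers for free prod} in the paper), and the observation that this estimate evaluates the SD decay function only at the \emph{letter} lengths while the number of alternations enters multiplicatively, so no composition of subexponential functions ever occurs. Your use of the infinite-order element of $G$ and the infinitude of $H$ to build long words playing the role of the extra free generator also matches the paper, which invokes Propositions 3.1--3.2 of \cite{amrutam2025strictcomparisonreducedgroup} to get homomorphisms $\phi_n:G*H*\langle t\rangle\to G*H$ that are the identity on $G*H$, injective on $B_n$, and send $t$ to a word of $3$ alternations with letters of length $O(mn)$.

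However, there is a genuine gap in how you propose to finish. The criterion actually being verified is the existence, for each $x\in\bC[B_n]$ and $\ee>0$, of such a homomorphism $\phi$ with $\|\phi(x)\|\le\|x\|+\ee$; this is not an ``error term'' estimate, and your suggested mechanism --- that the relevant elements have small $2$-norm as $n\to\infty$ --- does not apply, since the quantity to be bounded is $\|\phi_n(x)\|$ itself and $\|\phi_n(x)\|_2=\|x\|_2$ is not small. Even after applying the layered estimate one is left with a bound of the form $\|\phi_n(x)\|\le C(n)\,f(Cn)\,\|x\|_2$, and the subexponential prefactor does not disappear on its own. The paper eliminates it with the power trick: writing $\|\phi_{2nm}(x)\|^{2m}=\|\phi_{2nm}\big((x^*x)^m\big)\|$, noting that $\phi_{2nm}((x^*x)^m)$ has at most $8nm$ alternations with each letter of length at most $4nm$, applying Lemma \ref{lem: RDP estimate on layers for free prod} to get the factor $2\sqrt{2}\,(8nm)\max(f_G(4nm),f_H(4nm))$, using injectivity of $\phi_{2nm}$ on the support to replace $\|\phi_{2nm}((x^*x)^m)\|_2$ by $\|(x^*x)^m\|_2\le\|x\|^{2m}$, and finally taking $2m$-th roots so that the prefactor tends to $1$ as $m\to\infty$ (precisely because $f$ is subexponential and its argument grows only linearly in $m$). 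Without this step --- or some substitute for it --- your argument does not close; everything else in your outline is consistent with the paper's proof.
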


The proof is crucially motivated by the following lemma which is a direct consequence of Lemma 4.4 in \cite{hayes2025selflessreducedfreeproduct}, which in turn is an application of the Khintchine-type inequality of Ricard-Xu \cite{KhinRX}.

\begin{lem}\label{lem: RDP estimate on layers for free prod}
Suppose $G$ and $H$ are countable groups with property SD, i.e, there are subexponential functions $f_G$ and $f_H$ such that $G$ and $H$ admit $f_G$-decay and $f_H$-decay respectively. Then, for any element $x\in \mathbb{C}[G*H]$ such that $x$ is supported on words of at most $n$ many alternations in $G*H$, and each letter is at most of length $m$ inside $G,H$, we have $$\|x\|\leq 2\sqrt{2}(n+1)\max\left(f_G(m), f_H(m)\right)\|x\|_2.$$
\end{lem}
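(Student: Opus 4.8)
The plan is to take the Khintchine-type estimate of Ricard--Xu, in the packaged form recorded as Lemma~4.4 of \cite{hayes2025selflessreducedfreeproduct}, as a black box and feed the two SD hypotheses into it. First I would fix the grading of $\bC[G*H]$ by syllable length: every element is a finite sum of reduced words $s_1 s_2\cdots s_d$ whose syllables alternate between $G\setminus\{e\}$ and $H\setminus\{e\}$, and our $x$ is supported on those with $d\le n$. Under the standard identification of $\ell^2(G*H)$ with the free-product Hilbert space $\bigoplus_d\bigoplus_{\mathrm{type}}\mathring\ell^2\otimes\cdots\otimes\mathring\ell^2$ (one tensor slot per syllable, with $\mathring\ell^2$ the trace-orthocomplement of the scalars inside $\ell^2(G)$ or $\ell^2(H)$), the hypothesis that $x$ has at most $n$ alternations says precisely that $x$ lives in the first $n+1$ homogeneous layers, while the hypothesis that each letter has length at most $m$ says that in each tensor slot the relevant vector is supported in the $l_G$- or $l_H$-ball of radius $m$.

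Next I would invoke Lemma~4.4, whose estimate takes the form $\|x\|\le 2\sqrt2\,(n+1)\,C\,\|x\|_2$, where $2\sqrt2$ is the Khintchine constant, $(n+1)$ is the word-length factor, and $C$ controls the letters of $x$ one factor at a time, i.e.\ $C$ is the largest ratio $\|z\|/\|z\|_2$ attained by the single-factor pieces $z\in\bC[G]$ or $z\in\bC[H]$ that occur. The essential point is that, by the length hypothesis, each such $z$ is supported in the corresponding radius-$m$ ball, so Property SD applies directly: any $\phi\in\bC[G]$ supported in the $l_G$-ball of radius $m$ obeys $\|\phi\|\le f_G(m)\|\phi\|_2$, and any $\psi\in\bC[H]$ supported in the $l_H$-ball of radius $m$ obeys $\|\psi\|\le f_H(m)\|\psi\|_2$. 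Hence $C\le\max(f_G(m),f_H(m))$, and substituting this into Lemma~4.4 yields $\|x\|\le 2\sqrt2\,(n+1)\max(f_G(m),f_H(m))\,\|x\|_2$, as claimed.

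The hard part is not the deduction itself, which is short once Lemma~4.4 is available, but making sure the two hypotheses are transported to exactly the form the Khintchine inequality consumes. Concretely, I would check that the per-letter norms entering Lemma~4.4 are genuinely computed inside a single factor $G$ or $H$ (so that SD, a single-factor statement, applies) and at the single scale $m$ (so that a uniform $\max(f_G(m),f_H(m))$ suffices), and that the passage from the global norm of $x$ to these pieces preserves the clean bookkeeping of $2$-norms. Since distinct reduced words are orthonormal in $\ell^2(G*H)$, the squared $2$-norms of the pieces reassemble to $\|x\|_2^2$ with no loss, so no extra $n$-dependence is introduced beyond the $(n+1)$ supplied by the inequality. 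Verifying this alignment, rather than proving any new estimate, is where the care lies.
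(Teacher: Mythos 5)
Your proposal is correct and takes the same route as the paper: the paper offers no independent proof of this lemma, stating only that it is a direct consequence of Lemma~4.4 of \cite{hayes2025selflessreducedfreeproduct} (itself an application of the Ricard--Xu Khintchine-type inequality), and your argument is precisely that deduction with the bookkeeping spelled out. Your added care in checking that the per-letter norms are computed inside a single factor at the single scale $m$, so that the SD bound $\max(f_G(m),f_H(m))$ feeds cleanly into the Khintchine constant, is exactly the alignment the paper implicitly relies on.
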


Now we are ready to prove Theorem \ref{subexponential selfless free products}.

\begin{proof}[Proof of Theorem \ref{subexponential selfless free products}]
    It suffices to prove for all $\epsilon>0$ and all $x\in \mathbb{C}[G*H*\langle t \rangle]$ such that $x$ is supported on words of length at most $n$ in $G*H*\langle t\rangle $, and each letter is at most of length $n$ inside $G*H$ (for convenience, we denote the collection of such elements as $B_n$), the existence of a homomorphism $\phi:G*H*\mathbb{Z} \to G*H$ such that $\phi|_{G*H}=Id_{G*H}$ and that $\|\phi(x)\| \leq \|x\| + \epsilon.$ Following Proposition 3.1 of \cite{amrutam2025strictcomparisonreducedgroup}, we get homomorphisms $\phi_n: G*H*\langle{t}\rangle\to G*H$ which are identity on $G*H$ and are injective on $B_n$, and such that $|\phi_n(t)|=4n+1$. 

    Fix $x\in \mathbb{C}[B_n]$ and for now we pick a large $m\in \mathbb{N}$ (satisfying $m>n$), and compute using Lemma \ref{lem: RDP estimate on layers for free prod}:   
    
    \begin{align*}
        \|\phi_{2nm}(x)\|^{2m} &= \|\phi_{2nm}(x^*x)^m\| \\
        &\leq 2\sqrt{2}(8nm)\max\left(f_G(4nm), f_H(4nm)\right)\|\phi_{2nm}(x^*x)^m\|_2 \\
        &= 2\sqrt{2}(8nm)\max\left(f_G(4nm), f_H(4nm)\right)\|(x^*x)^m\|_2\\
        &\leq 2\sqrt{2}(8nm) \max(f_g(4nm), f_H(4nm))\|x\|^{2m}.
    \end{align*}

The crucial point above is to note that the entire word length of $\phi_{2nm}(x^*x)^{m}$ is not detected in line 2, thanks to Lemma \ref{lem: RDP estimate on layers for free prod}. Instead the individual word lengths of the alternating pieces are detected, but this is bounded by $4nm$. This is thanks to Proposition 3.2 of \cite{amrutam2025strictcomparisonreducedgroup} which constructs the maps $\phi_m$ as replacing $t$ with a word of 3 alternations in $G*H$, each letter being at most length $4mn$. Additionally, the number of alternations does appear, which is bounded by $8nm$, but it is not composed with the decay function, and is instead multiplied to the estimate.

Now taking $2m^{th}$ root both sides and taking $m$ large, we obtain the result.
\end{proof}

We remark that in essence, the same proof idea can recover selflessness of free products of $C^*$-probability spaces admitting filtrations with subexponential growth in the sense of \cite{KirchbergVaillant}, following the proof structure of \cite{hayes2025selflessreducedfreeproduct}. Due to the scope of the present article, we avoid including the technical details here, and leave it to the reader as an exercise. 

\subsection{Fr\'{e}chet *-subalgebras of $C_r^*(G)$}

We show here that as in Section 1 of \cite{jolissaint1989k}, one can construct Fr\'{e}chet subalgebras of $C_r^*(G)$ by considering seminorms related to a given subexponential function. However, we are unable here to show that they are holomorphically closed and therefore cannot relate their K-theory to that of $C_r^*(G)$ (see Remark \ref{frechet-bad}).

For $\varphi\in \bC[G]$, $L$ a length function on $G$, and $f$ a subexponential function, define $$\|\varphi\|_{2,f}^2 = \sum_{g\in G} |\varphi(g)|^2 f(L(g))^2.$$ We define $H_f(G) = \overline{\bC G}^{\|\cdot\|_{2,f}}$.  Let $p_N$ denote the orthogonal projection on $\ell^2(G)$ onto the span of the group elements of length at most $N$. Define 
\begin{align*}
    h(N) &= \ln\left(\frac{N}{\ln({f}(N))}\right); \\
    j(N) &= \ln(h(N)).
\end{align*} Note that since $f$ is subexponential, $h$ tends to infinity. Without loss of generality and for simplicity, we assume $h$ is increasing. Define, for $\alpha\in(0,1),$ $f_\alpha(N) := \exp\left(\frac{N}{h(N)^\alpha}\right)$. We define seminorms as follows, for $\alpha,\beta\in (0,1)$ and $q\in\bN$:

$$\nu_{\alpha,\beta,q}(a) = \sup_{N\ge 1} \{N^qf_\beta(N)\|(1-p_N)ap_{N-\frac{N}{j(N)^\alpha}}\|+\|p_{N-\frac{N}{j(N)^\alpha}}a(1-p_N)\|\}.$$

As in Lemma 1.2 of \cite{jolissaint1989k}, we achieve the following identity for $a,b\in \bC[G]$:

$$\nu_{\alpha,\beta,q}(ab) \le \nu_{(1+\alpha)/2,\beta,q}(a)\|b\| + \nu_{(1+\alpha)/2,\beta,q}(b)\|a\|.$$

It follows that the space $T_f^\infty(G)$ defined by the closure of $\bC[G]$ with respect to $\|\cdot\|$ and the family $(\nu_{\alpha,\beta,q})$ is a *-closed Fr\'{e}chet subalgebra of $C^*_r(G)$. We may also define $H_f^\infty(G)$ as follows:
$$H_f^\infty(G) = \bigcap_{\beta\in(0,1),q\in\bN} H_{N^qf_\beta}(G).$$

From the same proof as Proposition 1.3 in \cite{jolissaint1989k} and the fact that $\exp(N/j(N))$ dominates all of the $N^qf_\beta$, we have that $H_f^\infty(G) = T_f^\infty(G)$ whenever $G$ has $f$-decay. We record this as a proposition.

\begin{prop}
    If $G$ has $f$-decay then $H_f^\infty = T_f^\infty$ as defined above is a *-closed Fr\'{e}chet subalgebra of $C_r^*(G)$.
\end{prop}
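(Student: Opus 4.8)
The plan is to follow the blueprint of Jolissaint's construction in \cite{jolissaint1989k} essentially verbatim, having already set up all the analytic ingredients in the discussion preceding the statement. The strategy rests on two pillars: first, establishing that $T_f^\infty(G)$ is genuinely a Fr\'{e}chet $*$-subalgebra of $C^*_r(G)$; second, proving the identification $H_f^\infty(G) = T_f^\infty(G)$ under the hypothesis of $f$-decay.

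For the first pillar, I would argue as follows. The seminorm submultiplicativity estimate
$$\nu_{\alpha,\beta,q}(ab) \le \nu_{(1+\alpha)/2,\beta,q}(a)\|b\| + \nu_{(1+\alpha)/2,\beta,q}(b)\|a\|,$$
recorded above, is exactly the analogue of Lemma 1.2 of \cite{jolissaint1989k}; it shows that the family of seminorms $(\nu_{\alpha,\beta,q})$ together with the operator norm $\|\cdot\|$ is closed under multiplication in the appropriate sense, so that the completion is a topological algebra. Since each $\nu_{\alpha,\beta,q}$ is manifestly $*$-invariant (the defining supremum is symmetric under $a\mapsto a^*$, interchanging the two terms involving $p_N$ and $1-p_N$), the completion is $*$-closed. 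That this completion embeds continuously into $C^*_r(G)$ is immediate because the operator norm is one of the defining seminorms. This pillar is essentially bookkeeping once the Leibniz-type estimate is granted.

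The second and genuinely substantive pillar is the identification $H_f^\infty(G) = T_f^\infty(G)$. Here I would mirror Proposition 1.3 of \cite{jolissaint1989k}. The containment $H_f^\infty(G) \supseteq T_f^\infty(G)$, or rather the comparison of the two topologies, is where $f$-decay enters: one uses that for $x \in \bC[G]$ supported near a ball of radius $N$, the $f$-decay hypothesis lets one pass between the $\|\cdot\|_{2,N^q f_\beta}$-type Sobolev seminorms defining $H_f^\infty$ and the operator-norm-based cutoff seminorms $\nu_{\alpha,\beta,q}$ defining $T_f^\infty$. The key analytic input is the stated fact that $\exp(N/j(N))$ dominates all the $N^q f_\beta$; this ensures that the extra decay one pays for in translating between the two families of seminorms is absorbed, so the two completions coincide as topological vector spaces and hence as Fr\'{e}chet algebras.

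The main obstacle I expect is controlling the interplay between the two different scales built into the seminorms, namely the cutoff offset $N - N/j(N)^\alpha$ appearing in $\nu_{\alpha,\beta,q}$ and the weight $f_\beta(N) = \exp(N/h(N)^\beta)$, when verifying that $f$-decay is strong enough to dominate the resulting error terms. Concretely, in adapting the proof of Proposition 1.3 one must check that applying $f$-decay to an element straddling the band between radius $N - N/j(N)^\alpha$ and radius $N$ produces a factor that is swallowed by the gap between $f_\beta$ and $\exp(N/j(N))$; the delicate point is that $f$ itself is only subexponential rather than polynomial, so the margins $h$ and $j$ had to be defined logarithmically precisely to leave room for this. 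Once these estimates are seen to go through exactly as in \cite{jolissaint1989k}, with $f$-decay substituting for the rapid decay hypothesis used there, the equality $H_f^\infty(G) = T_f^\infty(G)$ follows and the proposition is complete.
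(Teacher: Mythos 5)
Your proposal follows the paper's own argument essentially verbatim: the paper likewise derives the Fréchet $*$-subalgebra structure of $T_f^\infty(G)$ from the Leibniz-type seminorm estimate (the analogue of Lemma 1.2 of Jolissaint), and obtains $H_f^\infty(G) = T_f^\infty(G)$ by running the proof of Jolissaint's Proposition 1.3 with $f$-decay in place of rapid decay, using exactly the fact that $\exp(N/j(N))$ dominates all of the $N^q f_\beta$. Your added commentary on why the margins $h$ and $j$ are chosen logarithmically matches the paper's own discussion in the surrounding remarks, so the approach and the key inputs coincide.
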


\begin{rem}
    \label{frechet-bad}
      Jolissaint's techniques break down when trying to show that $T_f^\infty$ is inverse-closed, the key step in showing that their K-theory matches $C^*_r(G)$. Indeed, the argument in Theorem 1.4 of \cite{jolissaint1989k} requires the function $j$ defined above to satisfy that $\exp(j(N))$ dominates $f$. However, in the proof that $H_f^\infty(G) = T_f^\infty(G)$ we crucially used the fact that $\exp(N/j(N))$ dominates $f$. For a polynomial, this is possible; take $j(N) = \sqrt{N}$. But for $f = \exp(\sqrt{n})$, no such $j$ exists.
\end{rem}

\begin{rem}
    Jolissaint constructs another algebra, $H^\omega(G)$, in the Appendix in \cite{jolissaint1989k}. Using similar ideas, one can again construct a *-closed Fr\'{e}chet subalgebra of $C^*_r(G)$ when $G$ has $f$-decay. However, we cannot show it is holomorphically-closed, with essentially the same obstruction as for $H_f^\infty(G).$
\end{rem}

.

\section{Preservation under amalgamated free products}

In this section we will prove the main theorems. Let $A\le G,H$ be a common subgroup and consider the amalgamated free product (AFP) $\Gamma = G*_A H$. Assume that $G,H$ admit length functions $L_G,L_H$ such that $L_G|_A = L_H|_A$; call this common restriction $L_A$. We may assume that $L_G,L_H$ are integer-valued as in \cite{jolissaint1990rapidly}.

\begin{defn}
\label{def: afp-len-agree-on-A}
    Define a function $L:\Gamma \to \bN$ as follows: 
\begin{align*}
    L(k) := \min\{&L_A(a) + L_G(g_1) + L_H(h_1) + \ldots \mid \\
    &k = ag_1h_1\cdots, \ a\in A, g_i\in G\setminus A, h_i\in H\setminus A\}
\end{align*}
\end{defn}

By symmetry, the following lemma also holds for $h\in H$ and $L_H$.
\begin{lem}
\label{lem: L = L_G}
    Let $g\in G$. Then $L(g) = L_G(g)$.
\end{lem}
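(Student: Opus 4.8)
The plan is to prove the two inequalities $L(g) \le L_G(g)$ and $L_G(g) \le L(g)$ separately. The first is essentially immediate from the definition: for any $g \in G$, the trivial ``reduced expression'' $g = g$ (with $g$ viewed as a single $G$-syllable, or as $a \cdot g_1$ if one prefers to split off an $A$-part) is one of the expressions over which the minimum defining $L$ is taken. Hence $L(g) \le L_G(g)$ directly, since the expression $k = g$ contributes exactly $L_G(g)$ (using that $L_A = L_G|_A$ so there is no ambiguity even if $g \in A$). I would record this direction in one line.

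The substantive direction is $L_G(g) \le L(g)$, and here the key point is to show that the minimizing expression for $g$ cannot do better than $L_G(g)$ itself. Suppose $g = a g_1 h_1 g_2 h_2 \cdots$ is an expression (with $a \in A$, $g_i \in G \setminus A$, $h_i \in H \setminus A$) achieving a value strictly less than $L_G(g)$. Since $g \in G$, I would use the normal form / uniqueness structure of the amalgamated free product $G *_A H$: an element of $G *_A H$ lies in the subgroup $G$ if and only if every reduced expression for it has all its $H$-syllables trivial (i.e.\ in $A$), equivalently the expression reduces, after absorbing $A$-letters, to a single $G$-syllable. Concretely, if the expression above genuinely involves a syllable $h_i \in H \setminus A$, then by the normal form theorem for amalgamated free products the element $a g_1 h_1 \cdots$ has reduced syllable length at least two and cannot equal an element of $G$ unless cancellation collapses it. I would argue that after all cancellation the expression must collapse to lie entirely in $G$, and then use the triangle inequality for $L_G$.

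The cleanest way to organize the main direction is: given any expression $g = a g_1 h_1 \cdots$, I want to show $L_G(g) \le L_A(a) + L_G(g_1) + L_H(h_1) + \cdots$. Because $g \in G$, I would show the right-hand side is bounded below by $L_G$ applied to the product, by repeatedly using subadditivity of $L_G$ on $G$ together with the fact that the factors actually multiply to an element of $G$. The technical heart is that although the $h_i$ lie in $H$ and a priori $L_G$ is not defined on them, the normal form theory forces the $H$-contribution to be ``paid for'' at least as expensively when measured in $G$; since $L_A = L_G|_A = L_H|_A$, the $A$-letters that mediate the cancellation are measured consistently in both factors. The main obstacle I anticipate is making rigorous the claim that no expression with nontrivial $H$-syllables can give a smaller value than the honest $G$-length: this requires invoking the uniqueness of reduced forms in $G *_A H$ (Britton-type normal form) to guarantee that any expression for $g \in G$ either has all $H$-syllables in $A$ (so collapses to a $G$-expression, where subadditivity of $L_G$ finishes the argument) or can be reduced to such a form without increasing the associated cost, the latter using $L_A = L_H|_A$ to control the absorbed $A$-elements. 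Once that structural reduction is in hand, the inequality $L_G(g) \le L(g)$ follows from the triangle inequality, and combining the two directions gives $L(g) = L_G(g)$.
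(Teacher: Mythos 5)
Your proposal is correct and follows essentially the same route as the paper: $L(g)\le L_G(g)$ comes from the trivial expression, and $L_G(g)\le L(g)$ comes from observing that the normal form theorem leaves only the expressions $g = a\cdot(a^{-1}g)$, $a\in A$, in the minimum defining $L(g)$, after which subadditivity of $L_G$ together with $L_A = L_G|_A$ finishes. The only remark is that the ``technical heart'' you anticipate---handling expressions with nontrivial $H$-syllables via cancellation---is vacuous: the definition of $L$ already requires $h_i\in H\setminus A$, so any expression containing such a syllable is a reduced word of syllable length at least two and simply cannot represent an element of $G$, leaving nothing to reduce or collapse.
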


\begin{proof}
    First assume $g\in A$. Then by definition $L(g) = L_A(g) = L_G(g)$. Now assume $g\in G\setminus A$. Then for all $a\in A,$ $L_G(g) \leq L_G(a) + L_G(a^{-1}g) = L_A(a) + L_G(a^{-1}g)$. But minimizing the the right hand side over all $a\in A$ is exactly $L(g)$, so that $L_G(g) \leq L(g)$. $L(g) \le L_G(g)$ is clear by the definition. 
\end{proof}

The previous lemma implies that really, we can define $L(a) = L_A(a)$ for $a\in A$ and $L(k) = \min\{L_G(g_1) + L_H(h_1) + \ldots \mid k = g_1h_1\cdots, \ g_i\in G\setminus A, h_i\in H\setminus A\}$ for $k\in\Gamma\setminus A$. We also remark that for $g_i\in G\setminus A$ and $h_i\in H\setminus A$,
\begin{align}
\label{afp-len-iden}
    L(g_1h_1\cdots g_nh_n) &= \min_{a_i,b_i\in A} L_G(g_1a_1) + L_H(a_1^{-1}h_1b_1) + L_G(b_1^{-1}g_2a_2) + \ldots
\end{align}

\begin{lem}
\label{lem: L-well-def}
    If $L_G,L_H$ are length functions on $G,H$ respectively that agree on $A$, then $L$ as defined above is a length function on $\Gamma = G*_AH$.
\end{lem}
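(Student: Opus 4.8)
The plan is to verify the three length-function axioms for $L$ directly from Definition \ref{def: afp-len-agree-on-A}. The axioms $L(e)=0$ and $L(k)=L(k^{-1})$ should be routine: for the first, the trivial expression $e=e$ (with $a=e$, no $g_i,h_i$) gives $L(e)\le L_A(e)=0$, and nonnegativity is immediate from nonnegativity of $L_A,L_G,L_H$. For symmetry, I would note that reversing and inverting any expression $k=ag_1h_1\cdots$ produces a valid expression for $k^{-1}$ of the same total length, using that each of $L_A,L_G,L_H$ is symmetric and that the reduced-word structure is preserved under inversion (since $(g_i)^{-1}\in G\setminus A$ iff $g_i\in G\setminus A$, and similarly for $H$); hence $L(k^{-1})\le L(k)$, and the reverse by applying this to $k^{-1}$.

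The real content is subadditivity, $L(k_1k_2)\le L(k_1)+L(k_2)$. Here I expect the main obstacle: unlike the free-product case, concatenating two reduced expressions need not yield a reduced expression, because the tail of $k_1$ and the head of $k_2$ may lie in the same factor ($G$ or $H$) and thus merge, or their product may land in $A$ and trigger further cancellation across the amalgam. The key observation is that $L$ is defined as a minimum over \emph{all} expressions of the form $ag_1h_1\cdots$ with $a\in A$ and alternating factors outside $A$, not merely reduced ones in the usual sense. So the cleanest route is to argue that the defining minimum is unchanged if we drop the requirement $g_i,h_i\notin A$ and allow arbitrary expressions $k=\prod_i k_i$ with each $k_i\in G\cup H$, scored by $\sum_i L_i(k_i)$ (where $L_i=L_G$ or $L_H$ according to the factor, with either choice permitted on $A$). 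This is precisely the observation already made in the introduction that $L=L^U$ when $L_G,L_H$ agree on $A$: the inequality $L^U\le L$ is immediate, and $L^U\ge L$ follows since any factorization can be reorganized and its $A$-syllables absorbed without increasing the total, using $L_A=L_G|_A=L_H|_A$ and subadditivity within each factor.

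Once $L$ is realized as this unconstrained minimum $L^U$, subadditivity is transparent: given near-optimal factorizations $k_1=\prod_i k_i$ and $k_2=\prod_j k_j'$ achieving (up to the minimum) $L(k_1)$ and $L(k_2)$, their concatenation is a factorization of $k_1k_2$ whose score is $\sum_i L_i(k_i)+\sum_j L_j(k_j')=L(k_1)+L(k_2)$; since $L(k_1k_2)$ is the infimum over all such factorizations, we get $L(k_1k_2)\le L(k_1)+L(k_2)$. The only care needed is to confirm the two factorizations can be concatenated into a single admissible expression for the unconstrained functional, which is automatic because no alternation or non-triviality constraint is imposed there.

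I would therefore structure the proof as: (i) dispatch $L(e)=0$ and symmetry directly; (ii) prove the lemma $L=L^U$ (equivalently, that allowing $A$-syllables and non-reduced expressions does not change the minimum), which is the technical heart; and (iii) deduce subadditivity from the obvious subadditivity of $L^U$ under concatenation. The delicate point throughout is handling the ambiguity of reduced-word representatives in an amalgam, which is exactly why recasting $L$ through the factorization form $L^U$ is the right move rather than manipulating reduced words of $k_1k_2$ by hand.
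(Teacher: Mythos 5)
Your proposal is correct, but it organizes the argument differently from the paper. The paper proves subadditivity head-on via a five-case analysis of how the last syllable of $k$ interacts with the first syllable of $k'$; the hardest case is when adjacent $G$-syllables multiply into $A$ and trigger cascading cancellation, which the paper handles by choosing $r$ maximal so that a middle block lies in $A$ and peeling it apart with subadditivity of $L_G$ and $L_H$ alternately. You instead identify $L$ with the unconstrained factorization functional $L^U$ and read off subadditivity from concatenation of factorizations. This is a clean reorganization, but note one point of care: the paper's own one-line derivation of $L=L^U$ in the introduction \emph{uses} subadditivity of $L$ (it invokes $L(k)\le\sum L(k_i)$), so you cannot quote that; you must prove $L\le L^U$ independently, exactly as you indicate, by a terminating merge argument --- given $k=\prod_i k_i$, repeatedly coalesce adjacent factors lying in the same free factor (or absorb an $A$-factor into a neighbour), each step not increasing $\sum_i L_i(k_i)$ by subadditivity of $L_G$, $L_H$ and the hypothesis $L_G|_A=L_H|_A$, until the expression strictly alternates between $G\setminus A$ and $H\setminus A$ and is admissible in Definition~\ref{def: afp-len-agree-on-A}. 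That induction on the number of factors is where all of the paper's Case (5) bookkeeping reappears, in a more uniform guise. What the paper's route buys is the explicit identity (\ref{afp-len-iden}) and the control of $L$-minimizing $A$-coset representatives, both reused heavily in the later decay estimates; what your route buys is a shorter, less case-ridden proof of the lemma itself. The handling of $L(e)=0$ and of symmetry is essentially the same in both.
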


\begin{proof}
    We must prove that $L(e)=0$, that $L$ is symmetric, and that $L$ is subadditive. That $L(e) = 0$ is clear. To show that $L(k) = L(k^{-1})$ for all $k\in \Gamma$, first note that if $k\in A$ then this follows from the fact that $L_A$ is symmetric. For $k\in \Gamma\setminus A$, write $k = g_1h_1\cdots g_nh_n$. By (\ref{afp-len-iden}), we have that $L(k) = L_G(g_1a_1) + L_H(a_1^{-1}h_1 a_2) + \ldots$ for some $a_1,\ldots,a_{2n}\in A$. But $k^{-1} = (h_n^{-1}a_{2n})(a_{2n}^{-1}g_na_{2n-1})\cdots (a_2^{-1}h_1a_1)(a_1^{-1}g_1)$. Therefore 
    \begin{align*}
        L(k^{-1}) &\leq L_H(h_n^{-1}a_{2n})+L_G(a_{2n}^{-1}g_na_{2n-1})+\ldots+ L_H(a_2^{-1}h_1a_1)+L_G(a_1^{-1}g_1)\\
        &= L_G(g_1a_1) + L_H(a_1^{-1}h_1 a_2) + \ldots\\
        &= L(k).
    \end{align*}
    By symmetry, we clearly have $L(k) = L(k^{-1})$.

    The proof of the subadditivity of $L$ has several cases, up to swapping the roles of $G$ and $H.$ Let $a\in A$ and $k,k'\in\Gamma\setminus A.$
    \begin{enumerate}
        \item $L(ak) \leq L(a)+L(k)$;
        \item $L(ka) \le L(k) + L(a)$;
        \item $L(kk') \le L(k) + L(k')$ where $k$ ends with an element in $G$ and $k'$ starts with an element in $H$;
        \item $L(kk')\le L(k)+L(k')$ where $k$ ends with an element in $G$ and $k'$ starts with an element in $G$ and the product of these elements is not in $A$;
        \item $L(kk')\le L(k)+L(k')$ where $k$ ends with an element in $G$ and $k'$ starts with an element in $G$ and the product of these elements is in $A$.
    \end{enumerate}
    Throughout the following arguments, we slightly abuse notation and write $L$ in place of $L_G$ and $L_H$, understanding that the restrictions of $L$ to $G$ and $H$ respectively are $L_G$ and $L_H.$
    \begin{enumerate}
        \item Write $k = g_1h_1\cdots g_nh_n$ where each $g_i\in G\setminus A$ and $h_i\in H\setminus A$. Then for all $a_i\in A$, we have
        \begin{align*}
            L(ak) &\le L(ag_1a_1) + L(a_1^{-1}h_1a_2) + \ldots \\
            &\le L(a) + L(g_1a_1) + L(a_1^{-1}h_1a_2) + \ldots 
        \end{align*}
        Using (\ref{afp-len-iden}) and minimizing over all choices of $a_i\in A$ in the final line above, we get that $L(ak) \leq L(a) + L(k)$. 
        \item Apply (1) and symmetry to see that $L(ka) = L(a^{-1}k^{-1}) \leq L(a^{-1}) + L(k^{-1}) = L(a) + L(k)$.
        \item Write $k = g_1h_1\cdots g_n$ and $k' = h_1'g_2'\cdots h_m'$. Then, by (\ref{afp-len-iden}), we have 
        \begin{align*}
            L(kk') &= \min_{a_i\in A} L(g_1a_1) + \ldots + L(a_{2n-2}^{-1}g_na_{2n-1}) + L(a_{2n-1}^{-1}h_1'a_{2n}) + \ldots \\
            &\le \min_{a_i\in A} L(g_1a_1) + \ldots + L(a_{2n-2}^{-1}g_n) + L(h_1'a_{2n}) + \ldots \\
            &= L(k) + L(k'),
        \end{align*}
        where when we add the restriction of $a_{2n-1}=e$, the minimum can only increase.
        \item Now write $k = g_1h_1\cdots g_n$ and $k' = g_1'h_1'\cdots h_m'$, where $g_ng_1'\notin A$. We have
        \begin{align*}
            L(kk') &= \min_{a_i\in A} L(g_1a_1) + \ldots + L(a_{2n-2}^{-1}g_ng_1'a_{2n-1}) + L(a_{2n-1}^{-1}h_1'a_{2n}) + \ldots \\
            &\le \min_{a_i\in A} L(g_1a_1) + \ldots + L(a_{2n-2}^{-1}g_n) + L(g_1'a_{2n-1}) + L(h_1'a_{2n}) + \ldots \\
            &= L(k) + L(k'),
        \end{align*}
        where we used the subadditivity of $L_G$ in the inequality.
        \item Again write $k = g_1h_1\cdots g_n$ and $k' = g_1'h_1'\cdots h_m'$, where now $g_ng_1'\in A$. Pick $r$ maximal such that $g_{n-r+1}h_{n-r+1}\cdots g_ng_1'h_2'\cdots h'_{r-1}g'_r$ is in $A$. In particular, this implies that $h_{n-s+1}\cdots g_ng_1'\cdots h_{s-1}$ and $g_{n-s+1}\cdots g_ng_1'\cdots g_s$ are in $A$ for all $s\leq r$. Let $a_i,a_i' \in A$ be some collection of elements. Then
        \begin{align*}
        \label{case-five}
            L(kk') &\le L(g_1a_1) + \ldots + L(a_{2(n-r)-2}^{-1}g_{n-r}a_{2(n-r)-1}) \\ &+L(a_{2(n-r)-1}^{-1}h_{n-r}g_{n-r+1}h_{n-r+1}\cdots g_ng_1'h_2'\cdots h'_{r-1}g'_rh'_r a'_{2r}) \\
            &+ L(a_{2r}^{'-1}g'_{r+1}a_{2r+1}) + \ldots 
        \end{align*}
    
    Let us now focus on the complicated middle term. \begin{align*}
        &L(a_{2(n-r)-1}^{-1}h_{n-r}g_{n-r+1}h_{n-r+1}\cdots g_ng_1'h_2'\cdots h'_{r-1}g'_rh'_r a'_{2r})\\
        &= L(a_{2(n-r)-1}^{-1}h_{n-r}a_{2(n-r)}a_{2(n-r)}^{-1}g_{n-r+1}\cdots g_ng_1'h_2'\cdots g'_ra'_{2r-1}a_{2r-1}^{'-1}h'_r a'_{2r}) \\
        &\le  L(a_{2(n-r)-1}^{-1}h_{n-r}a_{2(n-r)}) \\&+  L(a_{2(n-r)}^{-1}g_{n-r+1}h_{n-r+1}\cdots g_ng_1'h_2'\cdots h'_{r-1}g'_ra'_{2r-1}) \\&+ L(a_{2r-1}^{'-1}h'_r a'_{2r})
    \end{align*}
    by the subadditivity of $L$ on $H$. This works precisely because the middle term is in $A$. Continuing using the subadditivity of $L$ on $G$ and $H$ alternatively, we get that 
    \begin{align*}
        &L(a_{2(n-r)-1}^{-1}h_{n-r}g_{n-r+1}h_{n-r+1}\cdots g_ng_1'h_2'\cdots h'_{r-1}g'_rh'_r a'_{2r})\\
        &\le  L(a_{2(n-r)-1}^{-1}h_{n-r}a_{2(n-r)}) + L(a_{2(n-r)}^{-1}g_{n-r+1}a_{2(n-r)+1}) + \ldots \\&+ L(a_{2n-1}^{-1}g_n) + L(g_1'a_{1}') +\ldots \\&+ L(a_{2r-2}^{'-1}g'_ra'_{2r-1}) + L(a_{2r-1}^{'-1}h'_r a'_{2r}).
    \end{align*}
    Now, combining with our original estimate for $L(kk')$ and applying \ref{afp-len-iden}, we get that $L(kk') \leq L(k) + L(k').$
    \end{enumerate}
    
\end{proof}

Define also a (non-proper) length function on $G*_AH$ as follows: $K(a) = 0$ for all $a\in A$ and $K(g_1h_1\cdots g_nh_n)=2n$ for reduced words. To show that $G*_AH$ has RD/SD, it suffices to show the following. Define the sets $\Lambda_{k,k'}\subset \Gamma$ as follows:
\begin{align*}
    \Lambda_{k,k'} &= \{g\in G*_AH \mid K(g) = k, \ L(g) = k'\}. \\
    \Lambda_{k} &= \{g\in G*_AH \mid K(g) = k\}.
\end{align*}
Note that $K(g)\leq L(g)$ for all $g\in G*_AH$ so that whenever $k > k'$, $\Lambda_{k,k'} = \emptyset$.

\begin{lem}
    Suppose that for all $k,k',\ell,\ell',m$, for all $\varphi\in\bC[G*_AH]$ supported on $\Lambda_{k,k'}$, and for all $\psi\in\bC[G*_AH]$ supported on $\Lambda_{\ell,\ell'}$ we have that 
    $$\|(\varphi*\psi)\chi_{\Lambda_{m}}\|_2 \le f(k')\|\varphi\|_2\|\psi\|_2 $$
    for some polynomial (subexponential) function $f.$ Then $G*_AH$ has RD (resp. SD). 
\end{lem}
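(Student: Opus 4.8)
The plan is to reduce the two-variable statement to the single-variable hypothesis of Proposition~\ref{prop:jolissaint_1.2.6}, using the length function $L$ (which by Lemma~\ref{lem: L = L_G} restricts correctly to $G,H,A$) together with the auxiliary syllable-count $K$. First I would note that any $\varphi\in\bC[G*_AH]$ supported on $C_{k'}=\{L=k'\}$ decomposes as a finite sum $\varphi=\sum_{k\le k'}\varphi^{(k)}$, where $\varphi^{(k)}=\varphi\cdot\chi_{\Lambda_{k,k'}}$ is the part with exactly $k$ syllables; this is a \emph{finite} and \emph{orthogonal} decomposition, so $\|\varphi\|_2^2=\sum_k\|\varphi^{(k)}\|_2^2$, and crucially $k$ ranges only over $0\le k\le k'$ since $\Lambda_{k,k'}=\emptyset$ when $k>k'$. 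I would do the same for $\psi$ supported on $C_{\ell'}$, writing $\psi=\sum_{\ell\le\ell'}\psi^{(\ell)}$.

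Next I would fix the target syllable-length $m$ and estimate $\|(\varphi*\psi)\chi_{\Lambda_m}\|_2$. Expanding the convolution and using the triangle inequality on the finite double sum gives
\begin{equation*}
\|(\varphi*\psi)\chi_{\Lambda_m}\|_2 \le \sum_{k\le k'}\sum_{\ell\le\ell'} \|(\varphi^{(k)}*\psi^{(\ell)})\chi_{\Lambda_m}\|_2.
\end{equation*}
Here the key structural input, already flagged in the introduction, is that convolving a $k$-syllable element with an $\ell$-syllable element produces only words whose syllable count lies in $\{|k-\ell|,|k-\ell|+2,\dots,k+\ell\}$ (each cancellation of a pair at the junction, possibly absorbing an amalgam element, reduces the count by $2$). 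Thus $(\varphi^{(k)}*\psi^{(\ell)})\chi_{\Lambda_m}=0$ unless $|k-\ell|\le m\le k+\ell$, which drastically restricts the summation. For the surviving terms, the hypothesis applies with $\varphi^{(k)}$ supported on $\Lambda_{k,k'}$ and $\psi^{(\ell)}$ supported on $\Lambda_{\ell,\ell'}$, giving $\|(\varphi^{(k)}*\psi^{(\ell)})\chi_{\Lambda_m}\|_2\le f(k')\|\varphi^{(k)}\|_2\|\psi^{(\ell)}\|_2\le f(\max(k',\ell'))\|\varphi^{(k)}\|_2\|\psi^{(\ell)}\|_2$.

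The remaining work is purely combinatorial bookkeeping of the type already carried out in Proposition~\ref{prop:jolissaint_1.2.6} and Lemma~\ref{free_prod_SD}: I would apply Cauchy--Schwarz to convert the sum of products of $2$-norms into $\|\varphi\|_2\|\psi\|_2$ times a polynomial factor in $m$ (or in $k',\ell'$), paying a factor like $(2\min(k,m)+1)^{1/2}$ for the Cauchy--Schwarz step indexed by the at most $2\min+1$ admissible values of $\ell$ for each $k$, then summing the squared $2$-norms back up using orthogonality. Since $K\le L$ forces $m\le k+\ell\le k'+\ell'$, the polynomial prefactor is controlled by $L$-data, and multiplying $f$ by a polynomial preserves both the polynomial class (RD) and the subexponential class (SD). Having thus verified the single-variable convolution estimate of Proposition~\ref{prop:jolissaint_1.2.6} for $(G*_AH,L)$, that proposition immediately yields Property~RD (resp.\ SD). The main obstacle I anticipate is not the final summation but justifying the syllable-arithmetic claim that convolution changes the syllable count only by the parity-respecting amount $|k-\ell|\le m\le k+\ell$; this rests on the uniqueness of reduced-word representations in an amalgamated free product up to interposing elements of $A$, which is exactly the point where the amalgam (rather than a plain free product) makes the argument delicate, and where I would need to argue carefully that cancellation at the junction either deletes syllable pairs cleanly or merges them through an element of $A$.
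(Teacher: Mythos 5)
Your proposal is correct and takes essentially the same route as the paper, whose proof of this lemma is precisely the observation that $C_{k'}=\bigcup_{k\le k'}\Lambda_{k,k'}$ followed by the syllable-decomposition, the parity/triangle restriction $|k-\ell|\le m\le k+\ell$, and the Cauchy--Schwarz bookkeeping already carried out in Lemma~\ref{free_prod_SD} to verify the hypothesis of Proposition~\ref{prop:jolissaint_1.2.6}. One small caution: retain the bound $f(k')$ rather than weakening it to $f(\max(k',\ell'))$, since Proposition~\ref{prop:jolissaint_1.2.6} requires the constant to depend only on the length of the support of the first factor.
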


\begin{proof}
    The proof is entirely analogous to the part of the proof of Lemma \ref{free_prod_SD} where it is shown that $\bC[G*H]$ satisfies the assumptions of Proposition \ref{prop:jolissaint_1.2.6} since $C_{k'} = \bigcup_{k=1}^{k'} \Lambda_{k,k'}.$ (Here, as in Lemma \ref{free_prod_SD}, $C_{k'}$ denotes the elements $g\in\Gamma$ with $L(g) = k'$.)
\end{proof}

\begin{lem} Suppose $G,H$ have RD (respectively SD) with respect to length functions $L_G,L_H$ respectively. Assume that the restrictions of the length functions of $G$ and $H$ to $A$ agree. Then for all $k,k',\ell,m$, for all $\varphi\in\bC[G*_AH]$ supported on $\bigcup_{j'\le k'}\Lambda_{k,j'}$, and for all $\psi\in\bC[G*_AH]$ supported on $\Lambda_{\ell}$ we have that 
    $$\|(\varphi*\psi)\chi_{\Lambda_{m}}\|_2 \le \tilde{f}(k')\|\varphi\|_2\|\psi\|_2 $$
    for some polynomial (respectively subexponential) function $\tilde{f}$. Moreover, we can take $\tilde{f}(x) = \max(f_G(2x),f_H(2x))$ where $f_G$ and $f_H$ are the decay functions for $G$ and $H$.
\end{lem}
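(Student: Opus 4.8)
The plan is to fix the target syllable length $m$ and estimate $\|(\varphi*\psi)\chi_{\Lambda_m}\|_2^2 = \sum_{s \in \Lambda_m}|(\varphi*\psi)(s)|^2$ directly, following the Haagerup--Jolissaint scheme as adapted in the proof of Theorem \ref{thm:graph-prod-sd}. Since $\varphi$ is supported on (a union of) $\Lambda_{k,j'}$ and $\psi$ on $\Lambda_\ell$, a product $uv$ with $K(u)=k$, $K(v)=\ell$ contributes to $\Lambda_m$ only after a definite amount of cancellation: writing $u=u_1\cdots u_k$ and $v=v_1\cdots v_\ell$ as reduced words, the interaction at the center proceeds within a single factor, collapsing successive pairs $u_{k-i+1}v_i$ into $A$. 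The number $p$ of collapsed pairs is forced by $k,\ell,m$, and the parity of $k+\ell-m$ dictates the shape of the center: when $k+\ell-m$ is odd the two innermost surviving syllables merge into a single syllable $c$ lying in $G\setminus A$ or $H\setminus A$, while when $k+\ell-m$ is even the center is governed by an element of $A$ (either the $A$-element that slides across the junction when $p=0$, or the leftover $A$-element when one of the two words is entirely consumed). In each case I record the factor $X\in\{A,G,H\}$ in which the center lives.

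First I would decompose each $s\in\Lambda_m$ uniquely as $s=\hat u\cdot c\cdot\hat v$, where $\hat u$ (resp.\ $\hat v$) is the block of surviving syllables coming from $u$ (resp.\ $v$) and $c$ is the central element of $X$. The essential point, which I take from the fact that reduced words in $G*_AH$ are unique up to interposing elements of $A$, is that once we fix $L$-minimizing $A$-coset representatives for $\hat u$ and for the cancelled tail, this decomposition is a genuine bijection between $\Lambda_m$ and the admissible triples $(\hat u,c,\hat v)$. Mirroring the auxiliary functions $\phi_{(k-p)}^{(p)}$ from the graph-product argument, I would then define, for each block $\hat u$, an $\ell^2$-aggregated function on $X$ by $\varphi_{\hat u}(x)=\big(\sum_{z}|\varphi(\hat u\,x\,z)|^2\big)^{1/2}$, where $z$ runs over the cancelled tail of syllable length $p$, and symmetrically $\tilde\psi_{\hat v}(y)=\big(\sum_{z}|\psi(z^{-1}y\,\hat v)|^2\big)^{1/2}$. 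A Cauchy--Schwarz estimate in the tail variable $z$ then yields the pointwise bound $|(\varphi*\psi)(s)|\le(\varphi_{\hat u}*_X\tilde\psi_{\hat v})(c)$, a convolution taking place entirely inside the single group $X$.

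With this in hand I would apply Property RD/SD of $X$: since $A\le G$ inherits the decay function of $G$, in all three cases $\|\varphi_{\hat u}*_X\tilde\psi_{\hat v}\|_2\le \max(f_G(2k'),f_H(2k'))\,\|\varphi_{\hat u}\|_2\|\tilde\psi_{\hat v}\|_2$. The length bound $2k'$ is the one point demanding care: the convolution variable is a single syllable together with at most one adjacent cancelled $A$-element, and using (\ref{afp-len-iden}) together with the $L$-minimizing choice of representatives one checks that its $L_X$-length is at most $2k'$, since both the syllable and the absorbed $A$-element have length at most $L(\varphi)\le k'$. Finally I would reassemble: summing the pointwise bound over the bijection $s\leftrightarrow(\hat u,c,\hat v)$ gives $\sum_{s\in\Lambda_m}|(\varphi*\psi)(s)|^2\le\sum_{\hat u,\hat v}\|\varphi_{\hat u}*_X\tilde\psi_{\hat v}\|_2^2$, and the Parseval-type identities $\sum_{\hat u}\|\varphi_{\hat u}\|_2^2=\|\varphi\|_2^2$ and $\sum_{\hat v}\|\tilde\psi_{\hat v}\|_2^2=\|\psi\|_2^2$ (again a consequence of the unique-decomposition bijection) collapse everything to $\max(f_G(2k'),f_H(2k'))^2\|\varphi\|_2^2\|\psi\|_2^2$, which is the claim with $\tilde f(x)=\max(f_G(2x),f_H(2x))$.

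The step I expect to be the main obstacle is the bookkeeping surrounding the $A$-coset representatives. Unlike in a plain free product, a reduced word in $G*_AH$ is only well-defined up to sliding $A$-elements between syllables, so I must commit to a consistent, $L$-minimizing choice in order simultaneously to (i) make the map $s\mapsto(\hat u,c,\hat v)$ and the map from the support of $\varphi$ to triples $(\hat u,x,z)$ honest bijections, and (ii) keep the $L_X$-length of the central convolution variable controlled by $2k'$. Getting these two requirements to hold at once --- especially verifying that the absorbed $A$-element in the merge and full-cancellation cases does not inflate the length beyond $2k'$ --- is where the real work lies; the remaining Cauchy--Schwarz and Parseval manipulations are routine once the combinatorial skeleton is fixed.
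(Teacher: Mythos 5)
Your proposal is correct and follows essentially the same route as the paper: the case division by the parity of $k+\ell-m$ and the amount of cancellation $p$ is exactly the paper's Cases I--IV, your $\ell^2$-aggregated functions $\varphi_{\hat u}$, $\tilde\psi_{\hat v}$ are the paper's $\varphi'^{(g_1)}$, $\psi'_{g_2}$, and the $L$-minimizing coset representatives, the Cauchy--Schwarz step in the cancelled tail, the $2k'$ length bound, and the Parseval reassembly all match. You have also correctly identified the genuine technical crux, namely fixing coset representatives so that the decomposition is a bijection while keeping the central variable's length at most $2k'$, which the paper handles via the choices $L(g_1)=\min_{a\in A}L(g_1a)$ and $L(w)=\min_{a\in A}L(aw)$ together with the triangle inequality.
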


\begin{proof}
    The proof is divided into four cases depending on how much cancellation is required between a $K$-length $k$ and $K$-length $\ell$ element to get a $K$-length $m$ element. This is in analogy to the proofs in \cite{haagerup1978example} and \cite{jolissaint1990rapidly}. Let $f_G$ denote a polynomial (respectively subexponential) function such that $\|x\| \leq f_G(L(x))\|x\|_2$ for any $x\in \bC[G]$, where $L(x)$ is the max of $L(g)$ for $g$ in the support of $x$. We define $f_H$ similarly and set $f = \max(f_G,f_H)$. Note that $A$ also has RD (respectively SD) with respect to $L$ and $f$.

    Case I. Suppose $m = k+\ell$.
    Fix left (respectively right) coset representatives $T_1$ (respectively $T_2$) of $\Lambda_k$ (respectively $\Lambda_\ell$); i.e., $T_1A = \Lambda_k$ and $AT_2 = \Lambda_\ell$. Without loss of generality, pick $T_1$ so that for each $g_1\in T_1,$ $L(g_1) = \min_{a\in A}L(g_1a).$ Then for $s\in \Lambda_m$ there exist unique $g_1\in T_1$, $g_2\in T_2,$ and $b\in A$ such that $s = g_1bg_2$. Moreover, for all $a\in A,$ if $k' \ge L(g_1a)$ then $k' \ge L(g_1)$ and thus $L(a) \le 2k'.$ Hence, we have that
    $$(\varphi*\psi)(s) = \sum_{a\in A, L(a)\le 2k'}\varphi(g_1a)\psi(a^{-1}bg_2)$$
    Therefore, defining $\varphi^{g_1}(a) = \varphi(g_1a)$ for $a\in A$ such that $L(a)\le 2k'$ and 0 otherwise, and defining $\psi_{g_2}(a) = \psi(ag_2)$ for $a\in A$, we have 
    \begin{align*}
        \|(\varphi*\psi)\chi_{\Lambda_m}\|_2^2 &= \sum_{K(s)=m}|(\varphi*\psi)(s)|^2 \\
        &= \sum_{g_i\in T_i, b\in A} \left|\sum_{a\in A, L(a)\le 2k'}\varphi(g_1a)\psi(a^{-1}bg_2)\right|^2\\
        &= \sum_{g_i\in T_i, b\in A} \left|\sum_{a\in A, L(a)\le 2k'} \varphi^{g_1}(a)\psi_{g_2}(a^{-1}b)\right|^2\\
        &= \sum_{g_i\in T_i, b\in A} \left| \varphi^{g_1}*\psi_{g_2}(b)\right|^2\\
        &= \sum_{g_i\in T_i}\|\varphi^{g_1}*\psi_{g_2}\|_2^2\\
\end{align*}

Observe now that $\|\varphi\|_2^2 = \sum_{g_1\in T_1}\|\varphi^{g_1}\|_2^2$ since $T_1$ is a set of right coset representatives for $A$ (and so the supports of each $\varphi^{g_1}$ are disjoint). A similar identity holds for $\psi$ and $T_2$. Now, applying RD (respectively SD) for $A$ and the fact that $\varphi^{g_1}$ is supported on $L(a)\leq 2k'$ we get that

\begin{align*}
        \|(\varphi*\psi)\chi_{\Lambda_m}\|_2^2 &\le \sum_{g_i\in T_i}f(2k')^2\|\varphi^{g_1}\|_2^2\|\psi_{g_2}\|_2^2\\
        &= f(2k')^2\|\varphi\|_2^2\|\psi\|_2^2
    \end{align*}
    and so we conclude that
    $\|(\varphi*\psi)\chi_{\Lambda_{m}}\|_2 \le f(2k')\|\varphi\|_2\|\psi\|_2.$ 

    Case II. Suppose $m=k+\ell -2p$ for some $p\ge1$. We proceed similarly to the proof of Lemma 1.3 in \cite{haagerup1978example} except we track along coset representatives of $A$. As in Case I, choose sets $T_1\subset \Lambda_{k-p}$ and $T_2\subset \Lambda_{\ell-p}$ such that $g_1\in T_1$ implies $L(g_1) = \min_{a\in A}L(g_1a)$ and each $s\in \Lambda_m$ can be written uniquely as $g_1bg_2$ for some $g_1\in T_1,$ $g_2\in T_2$, and $b\in A.$
    
    Let $T\subset\Lambda_p$ be a collection of left coset representatives of $A$ such that $AT=\Lambda_p$ and for all $w\in T$, $L(w) = \min_{a\in A}L(aw)$. For all $v\in\Lambda_p$, there is a unique $a\in A$ and $w\in T$ such that $v = aw$. Note that whenever $L(g_1aw)\le k'$ then $L(a) \le 2k'$ since there exists $c\in A$ such that $L(g_1aw) = L(g_1ac) + L(c^{-1}w)$ and so 
    \begin{align*}
    L(a) &\le L(g_1aw) + L(g_1)+L(w) \\
    &\le k' + L(g_1ac) + L(c^{-1}w) \\
    &= k' + L(g_1aw) \\
    &\le 2k'.
    \end{align*}
    For $t\in \Lambda_{k-p}$, define $$\varphi'(t):= \left(\sum_{ w\in T}|\varphi(tw)|^2 \right)^{1/2}$$
    and $\varphi'(t) = 0$ otherwise;
    similarly, for $u\in \Lambda_{\ell-p}$ define 
    $$\psi'(u):= \left(\sum_{ w\in T}|\psi(w^{-1}u)|^2 \right)^{1/2}$$
    and $\psi'(u)=0$ otherwise.

    Now fix $s\in\Gamma$ and assume that $K(s) = m$ and write $s = g_1bg_2$ as above. Then
    \begin{align*}
        |\varphi*\psi(s)| &= \left|\sum_{K(v)=p}\varphi(g_1v)\psi(v^{-1}bg_2)\right|. \\
    \end{align*}
    But note that terms in the sum are only nonzero when $L(g_1v) \leq k'$, implying that $L(v) \leq L(g_1) + L(g_1v) \leq L(g_1c) + k' \leq 2k'$, where $c\in A$ is such that $L(g_1v) = L(g_1c) + L(c^{-1}v)$. Thus
    \begin{align*}
        |\varphi*\psi(s)| &\le \sum_{K(v)=p, L(v)\le 2k'}|\varphi(g_1v)\psi(v^{-1}bg_2)| \\
        &= \sum_{a\in A, L(a)\le 2k'}\sum_{w\in T}|\varphi(g_1aw)\psi(w^{-1}a^{-1}bg_2)|\\
        &\le \sum_{a\in A, L(a)\le 2k'}\left(\sum_{ w\in T}|\varphi(g_1aw)|^2\right)^{1/2}\left(\sum_{w\in T}|\psi(w^{-1}a^{-1}bg_2)|^2\right)^{1/2}\\
        &= \sum_{a\in A, L(a)\le 2k'} \varphi'(g_1a)\psi'(a^{-1}bg_2) 
    \end{align*}
    As in Case I, we define $\varphi'^{(g_1)}(a) = \varphi'(g_1a)$ for $a\in A$ such that $L(a)\le 2k'$ and 0 otherwise and we define $\psi'_{g_2}(a) = \psi'(ag_2).$ Thus $|\varphi*\psi(s)| \leq \varphi'^{(g_1)} * \psi'_{g_2}(b)$. Applying RD (respectively SD) for $A$ we now have that
    \begin{align*}
        \|(\varphi*\psi)\chi_{\Lambda_m}\|_2^2 &\le \sum_{g_1 \in T_1, g_2\in T_2} \sum_{b\in A} |\varphi'^{(g_1)} * \psi'_{g_2}(b)|^2 \\
        &= \sum_{g_1 \in T_1, g_2\in T_2} \|\varphi'^{(g_1)} * \psi'_{g_2}\|_2^2 \\
        &\le \sum_{g_1 \in T_1, g_2\in T_2} f(2k')^2 \|\varphi'^{(g_1)}\|_2^2 \| \psi'_{g_2}\|_2^2 
    \end{align*}

By a similar argument as in Case I, the supports of $\varphi'^{(g_1)}$ are disjoint for different values of $g_1\in T_1$ (and similarly for $\psi'$). Furthermore, $\|\varphi\|_2 = \|\varphi'\|_2$ since $g_1w = g_1'w'$ for $g_1,g_1'\in \Lambda_{k-p}$ and $w,w'\in T$ is only possible when $w=w'$ and $g_1=g_1'$. We conclude that 
\begin{align*}
    \|(\varphi*\psi)\chi_{\Lambda_m}\|_2 &\le f(2k') \|\varphi'\|_2 \| \psi'\|_2\\
        &= f(2k') \|\varphi\|_2 \| \psi\|_2
\end{align*}
as required.

    Case III. Suppose $m = k+\ell - 1$. As in Case I, fix left (respectively right) coset representatives $T_1$ (respectively $T_2$) of $\Lambda_{k-1}$ (respectively $\Lambda_{\ell-1}$); i.e., $T_1A = \Lambda_{k-1}$ and $AT_2 = \Lambda_{\ell-1}$. Without loss of generality, pick $T_1$ so that for each $g_1\in T_1,$ $L(g_1) = \min_{a\in A}L(g_1a).$ If $s\in\Gamma$ and $K(s)=m,$ we find $g_i\in T_i$ and $t\in G\cup H$ such that $K(g_1) = k-1$, $K(g_2)=\ell-1,$ and $g_1tg_2 = s$. Furthermore, if $xy = s$ where $K(x) = k$ and $K(y) = \ell$ then it must be that $x = g_1v_1$ and $y = v_2g_2$ for some $v_i\in G\cup H$ such that $v_1v_2 = t$. We observe, as in Case II, that if $L(g_1v_1) \le k'$ then $L(v_1)\le 2k'$. We also observe that the last letter of $g_1$ is in $G$ iff the first letter of $g_2$ is in $G$ iff $t\in H$. Therefore we have
    \begin{align*}
        \varphi*\psi(s) &= \sum_{v_i \in G\cup H, v_1v_2 = t, L(v_1)\leq 2k'} \varphi(g_1v_1)\psi(v_2g_2).
    \end{align*}

    Define, for each $g\in T_1,$ a function $\varphi^g$ given by $\varphi^{g}(v) = \varphi(gv)$ for $L(v)\le 2k'$, $v\in G\cup H$ and 0 otherwise. Note that the support of $\varphi^g$ is in $G$ if $K(g) = k-1$ and the last letter of $g$ is in $H$ and vice versa. Similarly, for $g\in T_2$ define $\psi_g(v) = \psi(vg)$ for $v\in G\cup H$. Again, this is supported on $G$ if $K(g) = \ell-1$ and starts with a letter in $H$. Therefore
    \begin{align*}
        \|(\varphi*\psi)\chi_{\Lambda_m}\|_2^2 &= \sum_{K(s)=m}|\varphi*\psi(s)|^2 \\
        &= \sum_{g_1\in T_1, g_2\in T_2}\sum_{t\in G\cup H}\left|\sum_{v_i \in G\cup H, v_1v_2 = t, L(v_1)\leq 2k'}\varphi(g_1v_1)\psi(v_2g_2)\right|^2\\
        &= \sum_{g_1\in T_1, g_2\in T_2}\|\varphi^{g_1}*\psi_{g_2}\|_2^2 \\
    \end{align*}
    We can now use the hypothesis about RD (respectively SD) on $G$ and $H$. We deduce that $\|\varphi^{g_1}*\psi_{g_2}\|_2 \leq f(2k') \|\varphi^{g_1}\|_2\|\psi_{g_2}\|_2$ since $\varphi^{g_1}$ is supported on words with $L(v)\leq 2k'$.  Hence
    \begin{align*}
        \|(\varphi*\psi)\chi_{\Lambda_m}\|_2^2 &\le \sum_{g_1\in T_1, g_2\in T_2}f(2k')^2 \|\varphi_{g_1}\|_2^2\|\psi_{g_2}\|_2^2 \\
        &= f(2k')^2 \|\varphi\|_2^2\|\psi\|_2^2.
    \end{align*}

    Case IV. Now suppose $m = k+\ell - 2p - 1$. Choose as in the previous cases coset representatives $T_1\subset \Lambda_{k-p-1}$ and $T_2\subset \Lambda_{\ell-p-1}$ such that $g_1\in T_1$ implies $L(g_1) = \min_{a\in A}L(g_1a)$. As in Case II, pick a collection of right coset representatives $T\subset \Lambda_p$ such that for all $w\in T$, $L(w) = \min_{a\in A}L(aw)$. For all $v\in\Lambda_p$, there is a unique $a\in A$ and $w\in T$ such that $v = aw$. We note the following for $g_1\in T_1$, $w\in T$, and $v_1\in G\cup H$ such that $v_1$ does not belong to the same group as the last letter of $g_1$ or the first letter of $w$: if $L(g_1v_1w)\le k'$ then $L(v_1)\leq 2k'$. Indeed, there are $a,b\in A$ such that $L(g_1v_1w) = L(g_1a)+L(a^{-1}v_1b)+L(b^{-1}w).$ Therefore 
    \begin{align*}
        L(g_1)+ L(w) &\leq L(g_1a) +L(a^{-1}v_1b)+ L(b^{-1}w) \\
        &= L(g_1v_1w)\\
        &\le k'.
    \end{align*}
    Thus $L(v_1) \le L(g_1v_1w) + L(g_1) +L(w) \le 2k'$. Observe that the conditions on $g_1,$ $w$, and $v_1$ starting and ending with compatible letters as described above is implied by $K(g_1v_1w) = k.$
    
    Define $$\varphi'^{(g_1)}(v):= \left(\sum_{ w\in T}|\varphi(g_1vw)|^2 \right)^{1/2}$$
    for $g_1\in T_1$ and $v\in G\cup H$ with $L(v)\le 2k'$ and $\varphi'^{(g_1)}(v) = 0$ otherwise;
    similarly, define 
    $$\psi'_{g_2}(v):= \left(\sum_{ w\in T}|\psi(w^{-1}vg_2)|^2 \right)^{1/2}$$
    for $g_2\in T_2$ and $v\in G\cup H$ and $\psi'_{g_2}(v) = 0$ otherwise. Note that $\varphi'^{(g_1)}$ is supported on either $G$ or $H$ and also only on $L(v) \le 2k'.$

     If $s\in\Gamma$ and $K(s)=m,$ we find $g_i\in T_i$ and $t\in G\cup H$ such that $K(g_1) = k-p-1$, $K(g_2)=\ell-p-1,$ and $g_1tg_2 = s$. Furthermore, if $xy = s$ where $K(x) = k-p$ and $K(y) = \ell-p$ then it must be that $x = g_1v_1$ and $y = v_2g_2$ for some $v_i\in G\cup H$ such that $v_1v_2 = t$. Now fix some $s\in\Gamma$ and assume that $K(s) = m$. 

    \begin{align*}
        |\varphi*\psi(s)| &= \left|\sum_{v_i\in G\cup H, v_1v_2=t, L(v_1)\le 2k'}\sum_{w\in T}\varphi(g_1v_1w)\psi(w^{-1}v_2g_2)\right| \\
        &\le \sum_{\substack{v_i\in G\cup H, v_1v_2=t, \\ L(v_1)\le 2k'}} \left(\sum_{w\in T}|\varphi(g_1v_1w)|^2\right)^{1/2}\left(\sum_{w\in T}|\psi(w^{-1}v_2g_2)|^2\right)^{1/2}\\
        &= \varphi'^{(g_1)} * \psi'_{g_2}(t).
    \end{align*}
    
We therefore have that

\begin{align*}
    \|(\varphi*\psi)\chi_{\Lambda_m}\|_2^2 &= \sum_{g_i\in T_i} \sum_{t \in G\cup H} |\varphi*\psi(g_1tg_2)|^2 \\
    &\le \sum_{g_i\in T_i} \sum_{t \in G\cup H}\varphi'^{(g_1)} * \psi'^{(g_1)}_{g_2}(t)\\
    &\le \sum_{g_i\in T_i} \|\varphi'^{(g_1)} * \psi'_{g_2}\|_2^2.
\end{align*}
We recall that each $\varphi'^{(g_1)}$ and $\psi'_{g_2}$ is defined only on $G$ or on $H$; furthermore, $\varphi'^{(g_1)}$ is supported only on $v_1$ such that $L(v_1)\le 2k'$, so we can use RD (respectively SD) of $G$ (and $H$) to deduce that

\begin{align*}
    \|(\varphi*\psi)\chi_{\Lambda_m}\|_2^2 &\le  \sum_{g_i\in T_i} f(2k')^2\|\varphi'^{(g_1)}\|_2^2 \|\psi'_{g_2}\|_2^2.
\end{align*}

Now observe that if $g_2,g_2'\in T_2$, $w,w'\in T$, and $v,v'\in G\cup H$ are such that $K(w^{-1}vg_2) = K(w'^{-1}v'g_2') = k$ then since we have fixed cosets for $w,w'$ and $g_2,g_2'$ it must be that $w=w',$ $g_2=g_2'$, and $v=v'$. This implies that
$$\sum_{g_2\in T_2}\|\psi'_{g_2}\|_2^2 = \|\psi\|_2^2. $$
We get a similar identity for $\varphi$ and so we can conclude that
\begin{align*}
    \|(\varphi*\psi)\chi_{\Lambda_m}\|_2 &\le f(2k')\|\varphi\|_2\|\psi\|_2
\end{align*}

    In all cases, we achieve a polynomial (respectively subexponential) function $\tilde{f}$, namely $f(2k')$, in $k'$ as required.
\end{proof}

The above lemmas prove the following theorem.

\begin{thm}[Theorem \ref{thm: no distortion}]
    Suppose $(G,L_G)$ and $(H,L_H)$ have RD (respectively SD). Suppose $A$ is a common subgroup of $G$ and $H$ such that $L_G|_A = L_H|_A$. Set $\Gamma=G*_AH$. Then $(\Gamma ,L)$ has RD (respectively SD), where $L$ is the length function in Definition \ref{def: afp-len-agree-on-A}.
\end{thm}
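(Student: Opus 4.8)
The plan is to assemble the lemmas already established. By Lemma \ref{lem: L-well-def}, the function $L$ of Definition \ref{def: afp-len-agree-on-A} is a genuine length function on $\Gamma = G*_A H$, so the statement ``$(\Gamma,L)$ has RD (resp.\ SD)'' is at least well-posed. By Lemma \ref{lem: L = L_G} and its symmetric counterpart for $H$, this length restricts to $L_G$ on $G$ and to $L_H$ on $H$; in particular $L|_A = L_A$, and $A$ inherits RD (resp.\ SD) with respect to $L_A$ as a subgroup. These facts let me rephrase every decay hypothesis on $G$, $H$, and $A$ purely in terms of $L$, which is essential since the convolution estimates below mix all three groups.

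Next I would reduce the theorem to a single convolution estimate. Introducing the syllable-counting function $K$ and the sets $\Lambda_{k,k'} = \{g : K(g)=k,\ L(g)=k'\}$, the decomposition $C_{k'} = \bigcup_{k\le k'}\Lambda_{k,k'}$ shows it suffices to bound $\|(\varphi*\psi)\chi_{\Lambda_m}\|_2 \le f(k')\|\varphi\|_2\|\psi\|_2$ for $\varphi,\psi$ supported on fixed $K$-layers. This is exactly the hypothesis of the preceding reduction lemma, which in turn verifies the conditions of Jolissaint's criterion, Proposition \ref{prop:jolissaint_1.2.6}. Since that proposition multiplies the layer bound only by polynomial factors in $k'$, and since the product of a subexponential (resp.\ polynomial) function with a polynomial is again subexponential (resp.\ polynomial), RD (resp.\ SD) for $(\Gamma,L)$ follows once the layer estimate is in hand.

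The heart of the argument is that layer estimate, which I would prove by splitting into cases according to the amount of cancellation when a $K$-length-$k$ element is multiplied by a $K$-length-$\ell$ element, namely $m = k+\ell-2p$ or $m = k+\ell-2p-1$, following the strategy of Haagerup \cite{haagerup1978example} and Jolissaint \cite{jolissaint1990rapidly}. In each case I fix $L$-minimizing coset representatives $T_1, T_2$ (and, in the cancelling cases, representatives $T$ of $\Lambda_p$) so that every product $s$ with $K(s)=m$ factors uniquely as $g_1 b g_2$ with $b\in A$. Writing $(\varphi*\psi)(s)$ as a convolution over the interposed amalgam element and absorbing the fixed coset data into auxiliary functions $\varphi^{g_1}, \psi_{g_2}$ reduces each estimate to a convolution over $A$, $G$, or $H$, where the RD/SD hypothesis applies directly; the Parseval-type identities $\|\varphi\|_2^2 = \sum_{g_1\in T_1}\|\varphi^{g_1}\|_2^2$ (and its analogue for $\psi$) then reassemble the squared norms to recover $\|\varphi\|_2^2\|\psi\|_2^2$.

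The main obstacle is controlling the $L$-length of the amalgam element that floats freely through a reduced word. Because reduced representations in an amalgamated free product are unique only up to interposing elements of $A$, the inner convolution runs over elements $a\in A$ that are a priori unbounded in $L$. The crux is to show that whenever $L(g_1 a)\le k'$ for an $L$-minimizing representative $g_1$, one has $L(a)\le 2k'$; this is what forces the inner convolution to be supported on the $L$-ball of radius $2k'$ and hence makes the decay function evaluate at $2k'$ rather than at something uncontrolled. Establishing this bound uniformly across all four cancellation patterns, while simultaneously keeping the coset representatives $L$-minimizing, is the technical heart of the proof, and is what makes the explicit decay function $\tilde f(x) = \max(f_G(2x),f_H(2x))$ possible.
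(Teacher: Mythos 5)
Your proposal follows essentially the same route as the paper: the same preliminary lemmas establishing that $L$ is a length function restricting to $L_G$, $L_H$, and $L_A$; the same reduction via the layers $\Lambda_{k,k'}$ and Proposition \ref{prop:jolissaint_1.2.6}; the same Haagerup--Jolissaint case split on the cancellation parameter with $L$-minimizing coset representatives; and the same key observation that $L(g_1a)\le k'$ forces $L(a)\le 2k'$, yielding the decay function $\max(f_G(2x),f_H(2x))$. The plan is correct and matches the paper's proof in all essential respects.
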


\begin{rem}
\label{rem: distortion}
    Even if $L_G$ and $L_H$ do not agree on $A$, one can still define a ``universal'' length function $L^U$ on the amalgamated free product $G*_A H$ as follows:
    $$L^U(k) = \min\left\{\sum_i L_i(k_i) : \prod_i k_i = k, k_i\in G\cup H\right\},$$
    where $L_i = L_G$ if $k_i\in G\setminus A$, $L_i= L_H$ if $k_i\in H\setminus A$, and if $k_i$ is in $A$, then $L_i$ can be chosen either to be $L_G$ or $L_H$ (this is key). Now, since $L^U$ is defined on all of $G*_AH$, its restriction to $G$ and $H$ now gives length functions which agree on $A$. So if $G,H$ have RD with respect to the restrictions of $L^U$ then the above theorem applies. This will happen if $L^U$ is not too \emph{distorted} relative to $L_G$ and $L_H$. Say two length functions $(L_1,L_2)$ on a group $K$ are $f$-distorted if $L_1(k)\le f(L_2(k)) $ and $L_2(k)\le f(L_1(k))$ for all $k\in K$. 
    
    It is straightforward from the definitions and from composition of functions that the following hold:
    \begin{itemize}
        \item If $(L_1,L_2)$ are polynomially distorted then $(K,L_1)$ has RD iff $(K,L_2)$ has RD;
        \item If $(L_1,L_2)$ are subexponentially distorted and $(K,L_1)$ has RD, then $(K,L_2)$ has SD;
        \item If $(L_1,L_2)$ are linearly distorted (so they are coarsely equivalent) then $(K,L_1)$ has SD iff $(K,L_2)$ has SD.
    \end{itemize}
\end{rem}

Theorem \ref{thm: no distortion} therefore generalizes as follows.

\begin{thm}[Theorem \ref{thm:afp-distortion}]
    Let $(G,L_G)$ and $(H,L_H)$ be groups with length functions and a common subgroup $A$. Define $L^U$ as in Remark \ref{rem: distortion}. Then $G*_AH$ has
    \begin{enumerate}
        \item RD if $(G,L_G)$ and $(H,L_H)$ have RD and the pairs $(L^U|_G, L_G)$ on $G$ and $(L^U|_H,L_H)$ on $H$ are each polynomially distorted;
        \item SD if $(G,L_G)$ and $(H,L_H)$ have RD and the pairs $(L^U|_G, L_G)$ on $G$ and $(L^U|_H,L_H)$ on $H$ are each subexponentially distorted;
        \item SD if $(G,L_G)$ and $(H,L_H)$ have SD and the pairs $(L^U|_G, L_G)$ on $G$ and $(L^U|_H,L_H)$ on $H$ are each linearly distorted.
    \end{enumerate}
\end{thm}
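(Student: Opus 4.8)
The plan is to deduce this from Theorem \ref{thm: no distortion} by replacing the pair $(G,L_G),(H,L_H)$ with the pair $(G,L^U|_G),(H,L^U|_H)$, whose two length functions agree on $A$. First I would confirm that $L^U$ is a genuine length function on $\Gamma=G*_AH$: one has $L^U(e)=0$; symmetry follows from $L_i(k_i)=L_i(k_i^{-1})$ after reversing a product decomposition; and subadditivity follows by concatenating decompositions. Since $L^U$ is a single function defined on all of $\Gamma\supseteq A$, its restrictions $L^U|_G$ and $L^U|_H$ are length functions on $G$ and $H$ sharing the common restriction $L^U|_A$, which is exactly the hypothesis that Theorem \ref{thm: no distortion} requires.

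Next I would transfer the decay hypotheses along the distortion, which is the content of the three bullet points of Remark \ref{rem: distortion}. Concretely, if $(L^U|_G,L_G)$ are $g$-distorted (replacing $g$ by a monotone majorant in the same growth class if necessary) and $\varphi\in\bC[G]$ is supported on the $L^U|_G$-ball of radius $D$, then every $s$ in the support satisfies $L_G(s)\le g(L^U|_G(s))\le g(D)$, so $\varphi$ lies in the $L_G$-ball of radius $g(D)$; applying $f$-decay for $(G,L_G)$ yields $\|\varphi\|\le f(g(D))\|\varphi\|_2$, i.e.\ $(G,L^U|_G)$ has $(f\circ g)$-decay, and similarly for $H$. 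The remaining point is purely about growth classes: $f\circ g$ is polynomial when $f,g$ are polynomial (case (1)); $f\circ g$ is subexponential when $f$ is polynomial and $g$ is subexponential, since $f(g(x))^{1/x}=(g(x)^{1/x})^{\deg f}\to 1$ (case (2)); and $f\circ g$ is subexponential when $f$ is subexponential and $g$ is linear, since $f(Cx)^{1/x}=\big(f(Cx)^{1/(Cx)}\big)^{C}\to 1$ (case (3)). Thus $(G,L^U|_G)$ and $(H,L^U|_H)$ have RD, SD, SD in the three respective cases.

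With the decay transferred, Theorem \ref{thm: no distortion} applies directly and shows that $\Gamma$ has RD (resp.\ SD) with respect to the length function $L$ of Definition \ref{def: afp-len-agree-on-A} built from $L^U|_G$ and $L^U|_H$. To identify this witness as $L^U$ itself, I would check $L=L^U$: the inequality $L^U\le L$ is immediate by applying subadditivity of $L^U$ to any minimizing decomposition $k=ag_1h_1\cdots$, while $L\le L^U$ follows from subadditivity of $L$ together with Lemma \ref{lem: L = L_G} (which gives $L|_G=L^U|_G$ and $L|_H=L^U|_H$) and the single-factor bound $L^U(k_i)\le L_i(k_i)$, so that $L(k)\le\sum_i L(k_i)=\sum_i L^U(k_i)\le\sum_i L_i(k_i)$ and one minimizes over decompositions. (For the qualitative statement only the existence of a witnessing length function is needed, so this identification is a convenience rather than a necessity.)

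The main obstacle here is bookkeeping rather than genuine analytic difficulty, since all the hard estimation is already packaged in Theorem \ref{thm: no distortion}. The one point demanding care—and the reason the three cases are stated asymmetrically—is that closure of growth classes under composition is one-sided: a polynomial absorbs a subexponential, so RD on the factors tolerates subexponential distortion, but a subexponential does not absorb a polynomial, and a composition of two subexponentials need not be subexponential (as noted in the introduction with $f(x)=e^{\sqrt{x}}$ and $g(x)=x^2$). This is precisely why case (3), which assumes only SD on the factors, must restrict the distortion to be linear.
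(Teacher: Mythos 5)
Your proposal is correct and follows essentially the same route as the paper: the paper's argument is exactly the content of Remark \ref{rem: distortion} (the restrictions $L^U|_G$ and $L^U|_H$ agree on $A$, and decay transfers along $f$-distortion by composing the decay function with the distortion function, with the three growth-class closure facts you list), followed by an application of Theorem \ref{thm: no distortion}. You in fact supply several details the paper leaves implicit --- that $L^U$ is a length function, the monotone-majorant reduction, and the identification $L = L^U$ --- all of which check out.
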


Now we prove the last two stated results in the introduction. 

\begin{prop}[Proposition \ref{amalgam-distort-bounds-afp-distort}]
    Let $(G,L_G)$ and $(H,L_H)$ have common subgroup $A$. Suppose that $(L_G|_A,L_H|_A)$ are $f$-distorted on $A$. Then $(L^U|_G,L_G)$ and $(L^U|_H,L_H)$ are $\tilde{f}$-distorted where
    \begin{enumerate}
        \item if $f(x) = x + O(1)$, then $\tilde{f}$ can be taken linear;
        \item if $f(x) = x+ O(\ln(x))$, then $\tilde{f}$ can be taken polynomial;
        \item if $f(x) = x + o(x)$, then $\tilde{f}$ can be taken subexponential.
    \end{enumerate}
\end{prop}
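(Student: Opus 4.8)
The plan is to establish that $(L^U|_G, L_G)$ are $\tilde f$-distorted with $\tilde f(x) = x\exp(f(x)-x)$ (the case of $H$ is identical by swapping $G$ and $H$). One direction is immediate: the one-term decomposition $k = g$ gives $L^U(g)\le L_G(g)$, and since $f(x)\ge x$ we have $\tilde f(x)\ge x$, so $L^U(g)\le \tilde f(L_G(g))$. Hence the entire content is the reverse bound $L_G(g)\le \tilde f(L^U(g))$ for $g\in G$. Before starting I would normalize $f$: replace it by its non-decreasing envelope and by $\max(f,\mathrm{id})$, and further assume that the excess $g(x):=f(x)-x$ satisfies that $g(x)/x$ is non-increasing (replacing $g(x)$ by $x\sup_{y\ge x} g(y)/y$). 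Each replacement only enlarges $f$ and preserves the three regimes $g=O(1)$, $g=O(\ln x)$, $g=o(x)$. I also assume $L_G,L_H$ are proper, so a nontrivial element has length $\ge 1$.

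Fix $g\in G$ and an optimal decomposition $g=k_1\cdots k_n$ with $\sum_i L_i(k_i)=D:=L^U(g)$. Assign each $k_i$ a factor and a weight $c_i=L_i(k_i)$: namely $G$ with weight $L_G(k_i)$ if $k_i\in G\setminus A$, $H$ with weight $L_H(k_i)$ if $k_i\in H\setminus A$, and for $k_i\in A$ whichever factor realizes the minimum defining $L^U$. Grouping maximal same-factor runs produces an alternating word $w_1\cdots w_p$ with $p\le n\le D$; I attach to each syllable the weight $W_j$ equal to the sum of the $c_i$ of its constituent letters, so that $L(w_j)\le W_j$ by subadditivity and the total weight is $T_0=\sum_i c_i=D$. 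I then run the following reduction while tracking $T=\sum_j W_j$. As long as $p\ge 2$ the word equals $g\in G$ and is therefore unreduced, so by the normal-form (Britton) lemma for amalgamated free products some syllable $w_j$ lies in $A$. I convert $w_j$ across to the neighbouring factor using the $f$-distortion on $A$ (its length passes from at most $W_j$ to at most $f(W_j)$) and merge it with its neighbour(s), assigning the merged syllable the weight $W_{j-1}+f(W_j)+W_{j+1}$. This keeps every syllable's length $\le$ its weight, lowers the syllable count, and changes the total by exactly $f(W_j)-W_j=g(W_j)\le g(T)$; in particular $T$ is non-decreasing, so each step is $T\mapsto T'\le T+g(T)$ with $T\ge D$ throughout.

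The crux is the accumulation over the at most $D$ reduction steps. Writing $T_0,T_1,\dots$ for the successive totals, from $T_{i+1}\le T_i+g(T_i)$ and $\ln(1+t)\le t$ I get $\ln T_{i+1}\le \ln T_i+g(T_i)/T_i$, hence $\ln T_{\mathrm{final}}\le \ln D+\sum_i g(T_i)/T_i$. Since the $T_i$ are non-decreasing from $D$ and $g(x)/x$ is non-increasing, every summand is at most $g(D)/D$; with at most $D$ steps this yields $\ln T_{\mathrm{final}}\le \ln D+g(D)$, i.e. $T_{\mathrm{final}}\le D\exp(g(D))=\tilde f(D)$. The final syllable equals $g$: if it lies in $G$ then $L_G(g)\le T_{\mathrm{final}}\le \tilde f(D)$, and if $g\in A$ and the reduction ended in $H$ then one last conversion gives $L_G(g)\le f(\tilde f(D))$, which alters the bound by a single further application of $f$ and so remains linear, polynomial, or subexponential in $D$ exactly as asserted in cases (1)--(3). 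I expect the main obstacle to be precisely this bookkeeping: one must verify that the reduction can always be routed through $A$-syllables (this is where Britton's lemma is used) and, more delicately, that applying $f$ at each of the $\le D$ steps compounds only to the factor $\exp(g(D))$ rather than to an iterate $f^{(D)}$. The logarithmic telescoping together with the monotonicity of $g(x)/x$ is exactly what prevents the iterated distortions from blowing up.
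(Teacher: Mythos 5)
Your proof is correct and lands on the paper's bound $\tilde f(D)=D\exp(f(D)-D)$ by the same underlying mechanism: at most $D=L^U(g)$ conversions across $A$, each multiplying the running total by at most $1+g(D)/D$ (using that $g(x)/x$ is non-increasing and that the total never drops below $D$), hence a compounded factor $\le e^{g(D)}$ rather than an uncontrolled iterate; your logarithmic telescoping is literally the paper's estimate $f^{(N)}(N)\le N(1+j(N)/N)^N\le N\exp(j(N))$. Where you genuinely differ is the combinatorial bookkeeping. The paper fixes the optimal decomposition once, pairs off the letters from $H\setminus A$ from the outside in, and recursively applies a two-fold distortion estimate to get $L_G(g)\le f^{(n)}(L^U(g))$; you instead group the decomposition into alternating syllables and repeatedly locate an $A$-syllable via the normal form (Britton) lemma, converting and merging one syllable per step. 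Your organization is the more robust of the two: the paper's intermediate claim that $\prod_{i=i_1}^{i_2}k_i\in A$ for \emph{every} pair of indices with $k_{i_1},k_{i_2}\in H\setminus A$ fails in general (e.g.\ $g=(h_1g_1h_2)\,g_2\,(h_3g_3h_4)$ with both parenthesized blocks in $A$ and $g_2\in G\setminus A$), whereas ``an unreduced alternating word of syllable length $\ge 2$ has a syllable in $A$'' is exactly what the normal form theorem supplies and always yields the next reduction step. Two small points to tidy: your step $T_{i+1}\le T_i+g(T_i)$ needs the normalized excess $g=f-\mathrm{id}$ to be non-decreasing (so that $g(W_j)\le g(T_i)$) \emph{in addition to} $g(x)/x$ non-increasing; your normalization as written only secures the latter, but taking the non-decreasing envelope afterwards preserves it, so both can be arranged. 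And the bound ``at most $D$ steps'' rests on $n\le L^U(g)$, i.e.\ on nontrivial letters having length at least $1$ --- the same tacit assumption the paper makes.
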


\begin{proof}
    Suppose $(L_G|_A,L_H|_A)$ are $f$-distorted on $A$. It suffices, by symmetry, to prove that $(L^U|_G,L_G)$ are $\tilde{f}$-distorted on $G$. It is clear that $L^U|_G \le L_G$ so we simply need to show that $L_G(g) \le \tilde{f}(L^U(g))$.

    Write $f(x) = x + j(x)$ for some $j(x) \in o(x)$. Without loss of generality, assume that $j(x)\ge0$, $j(x)$ is increasing, and that $j(x)/x$ is decreasing. (If $j(x) \in O(1)$ then simply replace $j(x)$ with $C$ a constant. If $j(x) \in O(\ln(x))$ then replace $j(x)$ with $C\ln(x)$ for an appropriate $C$. Otherwise, if $j(x) \in o(x)$ then there are $x_n$ such that $j(x) \leq \frac{x}{n}$ for all $x\geq x_n$. Define ${j}'(x) = \frac{x}{n}$ for $x\in[x_{n},x_{n+1})$. Then define $j''(x) = \max_{y\le x} j'(x)$. $j''(x)$ now dominates $j(x)$, but is still in $o(x)$, increasing, and $j''(x)/x$ decreases to 0.)
    
    Note that the conditions on $j(x)$ implies that $x\le f(x)$ for all $x$ and that $x+f(y) \leq f(x+y)$ for all $x,y.$ Denote by $f^{(k)}$ the $k$-fold composition of $f$ with itself. It then follows that for $h,h'\in H$ and $a\in A$ that if $hah'\in A$, then
    \begin{align*}
        L_G(hah') &\le f(L_H(hah')) \\
        &\le f(L_H(h) + L_H(h') + L_H(a))\\
        &\le f(L_H(h) + L_H(h') + f(L_G(a)))\\
        &\le f^{(2)}(L_H(h) + L_H(h') + L_G(a)).
    \end{align*}
    Now fix $g\in G$ and write $g = \prod_{i=1}^n k_i$ for $k_i\in G\cup H$. Pick $L_i$ such that $L_i = L_G$ if $k_i\in G\setminus A,$ $L_i = L_H$ if $k_i\in H\setminus A$, and if $k_i\in A$ pick $L_i$ to be either $L_G$ or $L_H$. Pick the $k_i$ and $L_i$ such that the sum $\sum_i L_i(k_i)$ is minimized; i.e., so that $L^U(g) = \sum_{i=1}^{n} L_i(k_i)$. Note that since the $L_i$ are integer-valued, we can assume $n \leq L^U(g)$. Furthermore, we can always assume that two consecutive $k_i$ never both come from $H\setminus A$, since if $h,h'\in H\setminus A$ then $L_H(hh') \leq L_H(h) + L_H(h')$. Furthermore, since $\prod_i k_i \in G$ it must be that all terms coming from $H\setminus A$ cancel, in the sense that if $i_1<i_2$ and $k_{i_1},k_{i_2}\in H\setminus A$, then $\prod_{i=i_1}^{i_2}k_i \in A$. Let $i_1,\ldots,i_m$ be the indices such that $k_{i_j}\in H\setminus A$. Then
    \begin{align*}
        L_G(g) &= L_G\left(\prod_{i=1}^n k_i\right) \\
        &\le L_G(k_1) + \ldots + L_G(k_{i_1-1}) + L_G\left(\prod_{i= i_1}^{i_m}k_i\right) + L_G(k_{i_m+1}) + \ldots + L_G(k_n) \\
        &\le L_G(k_1) +\ldots + L_G(k_{i_1-1})\\
        &+ f^{(2)}\left(L_H(k_{i_1}) + L_G\left(\prod_{i=i_1+1}^{i_m-1}k_i\right)+ L_H(k_{i_m})\right) \\
        &+ L_G(k_{i_m+1}) + \ldots+  L_G(k_n)\\
        &\le f^{(2)}\left(\ldots + L_H(k_{i_1}) + L_G\left(\prod_{i=i_1+1}^{i_m-1}k_i\right) + L_H(k_{i_m}) + \ldots\right)
\end{align*}
Continuing recursively on each pairing of the $k_i\in H\setminus A$, we deduce that
\begin{align*}
        L_G(g) &\le f^{(m)}\left(\sum_{k_i\in G} L_G(k_i) + \sum_{k_i \in H\setminus A} L_H(k_i)\right) \\
        &\le f^{(m)}\left(\sum_{k_i\in G\setminus A} L_G(k_i) + \sum_{k_i\in A} f(L_i(k_i))+ \sum_{k_i \in H\setminus A} L_H(k_i)\right)\\
        &\le f^{(n)}\left(\sum_{i=1}^n L_i(k_i)\right) \\
        &\le f^{(L^U(g))}(L^U(g)).
    \end{align*}
    In the case that $j(x) = C$, $f^{(L^U(g))}(L^U(g)) = L^U(g)(C+1)$ and so we have linear distortion. Otherwise, note that we have the following estimates.

    \begin{align*}
        f^{(N)}(N) &= f(f^{(N-1)}(N)) \\
        &= f^{(N-1)}(N) + j(f^{(N-1)}(N)) \\
        &= f^{(N-1)}(N) \left(1 + \frac{j(f^{(N-1)}(N))}{f^{(N-1)}(N)}\right) \\
        &\le f^{(N-1)}(N)\left(1+\frac{j(N)}{N}\right)
    \end{align*}
    where we used in the inequality the fact that $f(x) \geq x$ for all $x$ and that $j(x)/x$ is decreasing. This implies that 
    $$f^{(N)}(N) \le N\left(1+\frac{j(N)}{N}\right)^N \le N\exp(j(N)).$$
    
    Thus $L_G(g) \le L^U(g)\exp(j(L^U(g)))$; that is, we have polynomial distortion (of degree $C+1$) if $j(x) = C\ln(x)$ and subexponential distortion if $j(x) \in o(x)$.
    
\end{proof}

Proposition \ref{amalgam-distort-bounds-afp-distort} and Theorem \ref{thm:afp-distortion} immediately imply the following.

\begin{cor}[Corollary \ref{thm: amalgam-distortion}]
    Let $(G,L_G)$ and $(H,L_H)$ have common subgroup $A$. Suppose that $(L_G|_A,L_H|_A)$ are $f$-distorted on $A$.
    \begin{enumerate}
        \item If $f(x) = x + O(\ln(x))$ and $G,H$ have RD then $G*_AH$ is RD;
        \item If $f(x) = x + o(x)$ and $G,H$ have RD then $G*_AH$ is SD;
        \item If $f(x) = x + O(1)$ and $G,H$ have SD then $G*_AH$ has SD.
    \end{enumerate}
\end{cor}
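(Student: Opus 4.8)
The plan is to obtain the statement by directly composing Proposition~\ref{amalgam-distort-bounds-afp-distort} with Theorem~\ref{thm:afp-distortion}; the corollary is purely formal, and the only thing to verify is that the three growth regimes of the amalgam distortion $f$ feed into the matching hypotheses of the theorem.

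First I would use Proposition~\ref{amalgam-distort-bounds-afp-distort} to convert the hypothesis on $(L_G|_A, L_H|_A)$ into the corresponding distortion of the pairs $(L^U|_G, L_G)$ on $G$ and $(L^U|_H, L_H)$ on $H$. Concretely: in case (1), the assumption $f(x) = x + O(\ln(x))$ is exactly the hypothesis of part~(2) of the Proposition, which yields that both pairs are \emph{polynomially} distorted; in case (2), $f(x) = x + o(x)$ is the hypothesis of part~(3), yielding \emph{subexponential} distortion; and in case (3), $f(x) = x + O(1)$ is the hypothesis of part~(1), yielding \emph{linear} distortion.

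Then I would apply the appropriate part of Theorem~\ref{thm:afp-distortion} to each case. Since $G$ and $H$ have RD in cases (1) and (2), part~(1) of the Theorem gives RD for $G *_A H$ from polynomial distortion (case (1)), and part~(2) gives SD from subexponential distortion (case (2)). In case (3), $G$ and $H$ have SD and the restrictions are linearly distorted, so part~(3) of the Theorem gives SD for $G *_A H$.

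There is no genuine obstacle here: the argument is a mechanical chaining of two already-established results. The only point demanding attention is the bookkeeping, namely verifying that the distortion class produced as the \emph{output} of each part of the Proposition coincides with the distortion class required as the \emph{input} of the corresponding part of the Theorem, which is precisely the alignment recorded above.
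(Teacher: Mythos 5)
Your proposal is correct and is exactly the paper's argument: the paper derives the corollary by the same mechanical chaining, applying Proposition~\ref{amalgam-distort-bounds-afp-distort} to upgrade the distortion of $(L_G|_A, L_H|_A)$ to the distortion of $(L^U|_G, L_G)$ and $(L^U|_H, L_H)$, and then invoking the matching case of Theorem~\ref{thm:afp-distortion}. Your case-by-case alignment of the Proposition's outputs with the Theorem's inputs is the correct one.
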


\bibliographystyle{amsplain}
\bibliography{comparison.bib}

\providecommand{\bysame}{\leavevmode\hbox to3em{\hrulefill}\thinspace}
\providecommand{\MR}{\relax\ifhmode\unskip\space\fi MR }
% \MRhref is called by the amsart/book/proc definition of \MR.
\providecommand{\MRhref}[2]{%
  \href{http://www.ams.org/mathscinet-getitem?mr=#1}{#2}
}
\providecommand{\href}[2]{#2}
\begin{thebibliography}{10}

\bibitem{amrutam2025strictcomparisonreducedgroup}
Tattwamasi Amrutam, David Gao, Srivatsav~Kunnawalkam Elayavalli, and Gregory Patchell, \emph{Strict comparison in reduced group {$C^*$}-algebras}, to appear in Invent. Math (2025).

\bibitem{behrstockcentroid}
Jason~A. Behrstock and Yair~N. Minsky, \emph{Centroids and the rapid decay property in mapping class groups}, J. Lond. Math. Soc. (2) \textbf{84} (2011), no.~3, 765--784. \MR{2855801}

\bibitem{Ch17}
Indira Chatterji, \emph{Introduction to the rapid decay property}, Around {L}anglands correspondences, Contemp. Math., vol. 691, Amer. Math. Soc., Providence, RI, 2017, pp.~53--72.

\bibitem{chatterji2024distortiongraphsgroupsrapid}
Indira Chatterji and François Gautero, \emph{Distortion in graphs of groups and rapid decay classification of 3-manifold groups}, 2024.

\bibitem{Chatterji}
Indira Chatterji and Kim Ruane, \emph{Some geometric groups with rapid decay}, Geom. Funct. Anal. \textbf{15} (2005), no.~2, 311--339.

\bibitem{chatterji2001property}
Indira~Lara Chatterji, \emph{On property (rd) for certain discrete groups}, Ph.D. thesis, ETH Zurich, 2001.

\bibitem{ciobanu2011rapiddecaypreservedgraph}
Laura Ciobanu, Derek~F. Holt, and Sarah Rees, \emph{Rapid decay is preserved by graph products}, 2011.

\bibitem{graphprodRD}
Laura Ciobanu, Derek~F. Holt, and Sarah Rees, \emph{Rapid decay is preserved by graph products}, J. Topol. Anal. \textbf{5} (2013), no.~2, 225--237. \MR{3062944}

\bibitem{dogon2025connections}
Alon Dogon and Itamar Vigdorovich, \emph{Connections between hyperlinearity, stability and character rigidity for higher rank lattices}, arXiv preprint arXiv:2506.20843 (2025).

\bibitem{elayavalli2025negativeresolutioncalgebraictarski}
Srivatsav~Kunnawalkam Elayavalli and Christopher Schafhauser, \emph{Negative resolution to the {$C^*$}-algebraic {Tarski} problem}, https://arxiv.org/abs/2503.10505 (2025).

\bibitem{erschler2005degrees}
Anna~Gennad'evna Erschler, \emph{On degrees of growth of finitely generated groups}, Funct. Anal. Appl. \textbf{39} (2005), 317--320.

\bibitem{grigorchuk1985degrees}
Rostislav Grigorchuk, \emph{Degrees of growth of finitely generated groups, and the theory of invariant means}, Mathematics of the USSR-Izvestiya \textbf{25} (1985), no.~2, 259.

\bibitem{grigorchuk2014gap}
\bysame, \emph{On the gap conjecture concerning group growth}, Bull. Math. Sci. \textbf{4} (2014), no.~1, 113--128.

\bibitem{Grigorchuk}
Rostislav Grigorchuk and Tatiana Nagnibeda, \emph{Complete growth functions of hyperbolic groups}, Invent. Math. \textbf{130} (1997), no.~1, 159--188.

\bibitem{grigorchuk2008groups}
Rostislav Grigorchuk and Igor Pak, \emph{Groups of intermediate growth: an introduction}, Enseign. Math.(2) \textbf{54} (2008), no.~3-4, 251--272.

\bibitem{haagerup1978example}
Uffe Haagerup, \emph{An example of a non nuclear {$C^*$}-algebra, which has the metric approximation property}, Invent. Math. \textbf{50} (1978), no.~3, 279--293.

\bibitem{delaharpe}
Pierre de~la Harpe, \emph{Groupes hyperboliques, alg\`ebres d'op\'erateurs et un th\'eor\`eme de {J}olissaint}, C. R. Acad. Sci. Paris S\'er. I Math. \textbf{307} (1988), no.~14, 771--774. \MR{972078}

\bibitem{hayes2025selflessreducedfreeproduct}
Ben Hayes, Srivatsav~Kunnawalkam Elayavalli, and Leonel Robert, \emph{Selfless reduced free product {$C^*$}-algebras}, https://arxiv.org/abs/2505.13265 (2025).

\bibitem{jolissaint1989k}
Paul Jolissaint, \emph{K-theory of reduced {$C^*$}-algebras and rapidly decreasing functions on groups}, K-theory \textbf{2} (1989), no.~6, 723--735.

\bibitem{jolissaint1990rapidly}
\bysame, \emph{Rapidly decreasing functions in reduced {$C^*$}-algebras of groups}, Trans. Amer. Math. Soc. \textbf{317} (1990), no.~1, 167--196.

\bibitem{kassabov2013groups}
Martin Kassabov and Igor Pak, \emph{Groups of oscillating intermediate growth}, Ann. of Math. (2013), 1113--1145.

\bibitem{KirchbergVaillant}
Eberhard Kirchberg and Ghislain Vaillant, \emph{On {$C^*$}-algebras having linear, polynomial and subexponential growth}, Invent. Math. \textbf{108} (1992), no.~3, 635--652. \MR{1163240}

\bibitem{lafforgueRD}
Vincent Lafforgue, \emph{A proof of property ({RD}) for cocompact lattices of {${\rm SL}(3,\mathbb R)$} and {${\rm SL}(3,\mathbb C)$}}, J. Lie Theory \textbf{10} (2000), no.~2, 255--267. \MR{1774859}

\bibitem{LafforgueBaumConnes}
\bysame, \emph{K-th{\'e}orie bivariante pour les alg{\`e}bres de {B}anach et conjecture de {B}aum-{C}onnes}, Invent. Math. \textbf{149} (2002), no.~1, 1--95.

\bibitem{louder2022strongly}
Larsen Louder and Michael Magee, \emph{Strongly convergent unitary representations of limit groups}, J. Funct. Anal. \textbf{288} (2025), no.~6, Paper No. 110803, 28, With an appendix by Will Hide and Magee. \MR{4847195}

\bibitem{lubotzky1993expgrowth}
Alexander Lubotzky, Shahar Mozes, and M.~S. Raghunathan, \emph{Cyclic subgroups of exponential growth and metrics on discrete groups}, C. R. Acad. Sci. Paris S\'er. I Math. \textbf{317} (1993), no.~8, 735--740. \MR{1244421}

\bibitem{ozawa2025proximalityselflessnessgroupcalgebras}
Narutaka Ozawa, \emph{Proximality and selflessness for group {$C^*$}-algebras}, https://arxiv.org/abs/2508.07938 (2025).

\bibitem{RRS1998higherrankRD}
J.~Ramagge, G.~Robertson, and T.~Steger, \emph{A {H}aagerup inequality for {$\widetilde A_1\times\widetilde A_1$} and {$\widetilde A_2$} buildings}, Geom. Funct. Anal. \textbf{8} (1998), no.~4, 702--731. \MR{1633983}

\bibitem{KhinRX}
\'{E}ric Ricard and Quanhua Xu, \emph{Khintchine type inequalities for reduced free products and applications}, J. Reine Angew. Math. \textbf{599} (2006), 27--59. \MR{2279097}

\bibitem{robertselfless}
Leonel Robert, \emph{Selfless {${\rm C}^*$}-algebras}, Adv. Math. \textbf{478} (2025), Paper No. 110409, 28. \MR{4924062}

\bibitem{sapir2015rapid}
Mark Sapir, \emph{The rapid decay property and centroids in groups}, J. Topol. Anal. \textbf{7} (2015), no.~03, 513--541.

\bibitem{serre2003trees}
Jean-Pierre Serre, \emph{Trees}, Springer Monographs in Mathematics, Springer-Verlag, Berlin, 2003, Translated from the French original by John Stillwell, Corrected 2nd printing of the 1980 English translation. \MR{1954121}

\bibitem{ValetteBC}
Alain Valette, \emph{Introduction to the {B}aum-{C}onnes conjecture}, Lectures in Mathematics ETH Z\"urich, Birkh\"auser Verlag, Basel, 2002, From notes taken by Indira Chatterji, With an appendix by Guido Mislin. \MR{1907596}

\bibitem{vigdorovich2025structuralpropertiesreducedcalgebras}
Itamar Vigdorovich, \emph{Structural properties of reduced {$C^*$}-algebras associated with higher-rank lattices}, https://arxiv.org/abs/2503.12737 (2025).

\end{thebibliography}

\end{document}